\documentclass[12pt,a4paper]{article}
\usepackage[utf8x]{inputenc}
\usepackage{mathtools}
\usepackage{graphicx}
\usepackage{color}
\usepackage{tikz-cd}
\usepackage[shortlabels]{enumitem}
\usepackage{geometry}\geometry{top=2.5cm,bottom=2cm,left=3cm,right=3cm}

\RequirePackage{amsthm,amsmath,amsfonts,amssymb}
\RequirePackage[numbers]{natbib}

\usepackage{calc}

\usepackage{accents}
\newcommand{\dbtilde}[1]{\widetilde{\hspace{-1.5pt}\raisebox{0pt}[0.85\height]{$\widetilde{#1}$}}}

\usepackage[el,nf]{coelacanth}
\usepackage[T1]{fontenc}

\let\oldnormalfont\normalfont
\def\normalfont{\oldnormalfont\mdseries}

\newtheorem{theorem}{Theorem}[section]
\newtheorem{lemma}[theorem]{Lemma}
\newtheorem{proposition}[theorem]{Proposition}
\newtheorem{corollary}[theorem]{Corollary}

\theoremstyle{definition}
 
\newtheorem{remark}[theorem]{Remark}

\setlength{\parindent}{0pt}
\setlength{\parskip}{5pt}

\newcommand{\punkt}{\,\begin{picture}(-1,1)(-1,-3)\circle*{2.5}\end{picture}\;\; }

\newcommand{\T}{\mathbb{T}}
\newcommand{\N}{\mathbb{N}}

\newcommand{\C}{\mathbb{C}}
\newcommand{\R}{\mathbb{R}}
\newcommand{\E}{\mathbb{E}}
\newcommand{\pp}{\mathbb{P}}

\DeclareRobustCommand{\rchi}{{\mathpalette\irchi\relax}}
\newcommand{\irchi}[2]{\raisebox{\depth}{$#1\chi$}}

\renewcommand{\emph}[1]{\textbf{ #1}}

\title{Decoupling inequalities with exponential constants}

\author{Daniel Carando \thanks{Departamento de Matem\'atica, Facultad de Cs. Exactas y Naturales,
		Universidad de Buenos Aires and IMAS-UBA-CONICET, Int.~G\"uiraldes s/n, 1428, Buenos Aires, Argentina (dcarando@dm.uba.ar). Supported by CONICET-PIP 11220130100329CO,  ANPCyT PICT 2015-2299 and ANPCyT PICT
2018-04104
.}\and
Felipe Marceca\thanks{Departamento de Matem\'atica, Facultad de Cs. Exactas y Naturales,
	Universidad de Buenos Aires and IMAS-UBA-CONICET, Int.~G\"uiraldes s/n, 1428, Buenos Aires, Argentina (fmarceca@dm.uba.ar). Supported by a CONICET doctoral fellowship,  CONICET-PIP 11220130100329CO,  ANPCyT PICT 2015-2299 and ANPCyT PICT 2018-04104
.} \and
Pablo Sevilla-Peris\thanks{Instituto Universitario de Matem\'atica Pura y Aplicada,
Universitat Polit\`{e}cnica de Val\`encia, cmno Vera s/n, 46022,
Val\`encia, Spain (psevilla@mat.upv.es) Supported by MINECO and FEDER Project MTM2017-83262-C2-1-P}}

\begin{document}

\maketitle

\begin{abstract} Decoupling inequalities disentangle complex dependence structures of random objects so that they can be analyzed by means of standard tools from the theory of independent random variables.
We study decoupling inequalities for vector-valued homogeneous polynomials evaluated at random variables.  We focus on providing geometric conditions ensuring decoupling inequalities with good constants depending only exponentially on the degree of the polynomial. Assuming the Banach space has finite cotype we achieve this for classical decoupling inequalities that compare the polynomials with their associated multilinear operators.
Under stronger geometric assumptions on the involved Banach spaces, we also obtain decoupling inequalities between random polynomials and fully independent random sums of their coefficients.
Finally, we present decoupling inequalities where in the multilinear operator just two independent copies of the random vector are involved (one repeated $m-1$ times).
\end{abstract}

\section*{Introduction}

The decoupling principle consists in introducing enough independence to make a complex problem more manageable. More precisely, decoupling inequalities
compare objects involving heavily dependent random variables to simpler ones where the dependence structure is weaker.

In this work we present several decoupling inequalities for random homogeneous polynomials (precise definitions are given below).
Multivariate polynomials evaluated at random variables have at first glance a highly dependent structure, since each random variable appears in several monomials. Decoupling inequalities disentangle this complex structure introducing enough independence to use tools from the theory of independent random variables (see \cite{DPGi99}).

Notice that if $P:\mathbb C^n\to X$ is a vector-valued $m$-homogeneous polynomial and $M$ is its associated symmetric $m$-linear operator, we can write
\[
P(z)=M(z,\ldots,z).
\]
When $\xi$ is  a random vector  and $\xi^{(1)},\ldots,\xi^{(m)}$ are independent identically distributed (`iid' from now on) copies of  $\xi$, the random variable $M(\xi^{(1)},\ldots,\xi^{(m)})$ is, from a probabilistic point of view, a decoupled alternative to $P(\xi)$. Heuristically, the variables appearing in the monomials of $M$ are less intertwined, which leads to a weaker interdependence.

Decoupling inequalities in this context were first introduced by McConnell and Taqqu in \cite{mctaq, mctaq0} and further studied by de Acosta in \cite{acosta} and Kwapie\'n in \cite{Kw87} among others. These works established inequalities comparing the moments of $M(\xi^{(1)},\ldots,\xi^{(m)})$ to those of $P(\xi)$ with constants depending on the degree $m$ but not on the number of variables $n$ of the polynomial. The dependence on $m$ of these constants improves considerably when restricting ourselves to $\alpha$-stable random variables and are in some sense optimal for gaussian random variables (see \eqref{gauss}). Our main objective is to provide geometric conditions on the Banach space to ensure good constants (of the form $C^m$) for arbitrary symmetric random variables.

In Theorem~\ref{decoup} we give a decoupling inequality for $p$-moments of tetrahedral polynomials. This gives, for Banach spaces of non-trivial cotype, better constants than those derived from Kwapien's general result  \cite[Theorem~2]{Kw87}. We obtain Theorem~\ref{decoup} as a consequence of Theorem~\ref{compara} and Remark~\ref{remarkgauss}, which essentially show that for spaces of non-trivial cotype the random vector $\xi$  of a random polynomial can be changed without losing control of its norm. Since polynomials in gaussian random variables satisfy good classical decoupling inequalities, Theorem~\ref{decoup} follows.
The key points for Theorem~\ref{compara} are \eqref{eqdecom}, where we introduce a novel decomposition for homogeneous tetrahedral polynomials in terms of an average of multilinear operators, and the decoupling inequality presented in Proposition~\ref{decosim2}.

As it was just mentioned, the monomials in a random multilinear operator show much less dependence than those of the corresponding random polynomial. However, some dependence remains. Under  stronger geometric conditions on the involved Banach space, in Section~\ref{sectotin} we show decoupling inequalities between arbitrary polynomials and fully independent sums of their coefficients. A result in this direction was obtained in \cite{ruc} for Steinhaus random variables assuming type or cotype 2 of the Banach space $X$. Regrettably, if one needs estimates both from above and below, one must assume $X$ has type and cotype 2, which means that $X$ must be isomorphic to a Hilbert space where all these estimates hold trivially. However, the result from \cite{ruc} holds for absolute constants. Allowing for some dependence on the degree of the polynomial (constants of the form $C^m$), we can relax the geometric restrictions on the Banach space. We work with the Gaussian average property (GAP) introduced in \cite{CaNi97} and in Theorems \ref{propgap} and \ref{propgapduo} we get decoupling inequalities relating (not necessarily tetrahedral) $X$-valued random polynomials and sums of independent random variables related to the polynomials' coefficients. In particular, two-sided estimates hold for spaces including Banach lattices of non-trivial type. We also show analogous estimates for tetrahedral polynomials in symmetric random variables.

Finally, in Section~\ref{sec1dec} we study one-variable decoupling inequalities that compare $P(\xi)=M(\xi,\ldots,\xi)$ to $M(\xi',\xi,\ldots,\xi)$, where we replace $\xi$ with an iid copy $\xi'$ in only one entry. In some sense, one-variable decoupling can be seen as an averaged version of the so called Markov type inequalities for homogeneous polynomials studied by Harris in \cite{harris}. In this context, gaussian variables also satisfy optimal one-variable decoupling inequalities. In Theorem~\ref{1decoz} we show that for $K$-convex Banach spaces, Steinhaus variables behave in the same way.

Let us point out that the Bohr radius and the Bohnenblust-Hille inequality were recently studied in the context of functions on the Boolean cube $\{-1,1\}^n$ in \cite{DeMaPe18,DeMaPe19}, showing an intriguing link between the Bohnenblust-Hille inequality and quantum query complexity. Since functions on the boolean cube can be thought of as random functions on Rademacher variables, we believe that this decoupling approach could have further applications in this setting (see \cite{1deco}).

%
%

\section{Preliminaries}

A \emph{polynomial} of $n$ variables with values in some Banach space $X$ is a function $P:\C^n\to X$ given by a finite sum
\[
P(z) = \sum_{\alpha \in \Lambda \subseteq \mathbb{N}_{0}^{n}} x_{\alpha} z_1^{\alpha_1}\ldots z_n^{\alpha_n},
\]
where $x_{\alpha} \in X$ for every $\alpha\in \Lambda$. We purposely write vector times scalar rather than scalar times vector to emphasize the polynomial structure. Also, we write $z^\alpha= z_1^{\alpha_1}\ldots z_n^{\alpha_n}$ for short.

The degree of a polynomial is the maximum of $\vert \alpha \vert = \alpha_{1} + \cdots + \alpha_{n}$ over every $\alpha \in \Lambda$ such that $x_\alpha\neq 0$. We say that $P$ is \emph{$m$-homogeneous} if $\vert \alpha \vert = m$ for every $\alpha \in \Lambda$ with $x_\alpha\neq 0$. Whenever the number of variables $n$ is implicit we write $m$-homogeneous polynomials as
\[P(z) = \sum_{|\alpha|=m} x_{\alpha} z^{\alpha},\]
allowing for some coefficients $x_\alpha$ to vanish.

A polynomial is said to be \emph{tetrahedral} whenever each variable appears with exponent at most 1. In other words, monomials of a tetrahedral polynomial of $n$ variables can be indexed by $\alpha\in\{0,1\}^n$ and therefore we write
\[
P(z) = \sum_{\alpha \in  \{0,1\}^{n}} x_{\alpha} z^{\alpha}.
\]

Next, we introduce the Walsh notation for tetrahedral polynomials that we frequently use. We can identify $\alpha \in \{0,1\}^n$ with the set $A\subseteq\{1,\ldots,n\}$ that indicates which elements $1\le k \le n$ satisfy that $\alpha_k=1$. More precisely we have that $\alpha=\rchi_A$. This one-to-one correspondence allows us to index tetrahedral polynomials using subsets of $\{1,\ldots,n\}$. By a slight abuse of notation, writing $x_A$ for $x_\alpha$ whenever $\alpha=\rchi_A$ (and denoting $[n]=\{1,\ldots,n\}$), we get
\[
P(z)=\sum_{\alpha \in  \{0,1\}^{n}} x_{\alpha} z^{\alpha}=\sum_{\alpha \in  \{0,1\}^{n}} x_{\alpha} \prod_{\substack{1\le k \le n \\ \alpha_k=1}}z_k
=\sum_{A\subseteq [n]} x_A \prod_{k\in A} z_k.
\]
Letting $z_A=\prod_{k\in A} z_k$ leads to the new notation for tetrahedral polynomials:
\[P(z)=\sum_{A\subseteq [n]} x_A z_A.\]
For $m$-homogeneous tetrahedral polynomials we write
\[P(z)=\sum_{|A|=m} x_A z_A,\]
where $|A|$ stands for the cardinal of $A$.

For an $m$-homogeneous polynomial $P:\C^n\to X$ there exists unique symmetric $m$-linear operator $M:(\C^n)^m\to X$ such that $M(z,\ldots,z)= P(z)$ for every $z\in\C$ (see e.g. \cite[Section~1.1]{LibroDi99}), we call it \emph{the symmetric $m$-linear operator associated to} $P$. The operator $M$ can be retrieved from the polynomial $P$ through the polarization formula (see \cite[Corollary~1.6]{LibroDi99}): for every $z^{(1)},\ldots,z^{(m)}\in\C$, we have
\[M\left(z^{(1)},\ldots,z^{(m)}\right)=\frac{1}{m!}\E_\varepsilon \left[ \varepsilon_1\ldots\varepsilon_m P\left(\varepsilon_1 z^{(1)}+ \ldots + \varepsilon_m z^{(m)}\right)\right],\]
where $\varepsilon_1, \ldots ,\varepsilon_m$ are \emph{independent Rademacher variables} (random variables that take the values $\pm1$ with probability $1/2$). This identity allows us to relate the norm of a homogeneous polynomial with the norm of its associated multilinear operator. A straightforward argument shows that for every norm $\|\punkt \hspace{-0.3cm}\|$ on $\C^n$ and every $m$-homogeneous polynomial $P:\C^n\to X$ we have
\begin{align*}
\sup_{\|z\|\leq 1} \| P ( z ) \|_X\le \sup_{\left\|z^{(k)} \right\|\leq 1} \left\| M ( z^{(1)},\ldots , z^{(m)} ) \right\|_X \leq e^m \sup_{\|z\|\leq 1} \| P ( z ) \|_X.
\end{align*}

\begin{remark}\label{alam}
At first glance the bound $e^m$ from the previous proposition may seem quite big. However, estimates of the form $C^m$ appear naturally while working with $m$-homogeneous polynomials. Moreover, these estimates can be compensated by contracting the polynomials since for an $m$-homogeneous polynomial $P$ we have that $P(rz)=r^mP(z)$. Having this type of control is usually sufficient to carry results from the polynomial setting to vector-valued holomorphic functions or Fourier and Dirichlet series (see \cite[Chapters 23--26]{DeSe19_libro}).
Intuitively, contracting a function by some factor $r$ shrinks its homogeneous parts by a factor of $r^m$ leaving room for constants $C^m$ to appear. As a naive example of this phenomenon, notice that if $(a_m)_{m\in\N}\subseteq \C$ and $|a_m|\leq C^m$ for every $m\in\N$ then the series
\[\sum_{m\in\N}a_m z^m,\]
converges in a neighbourhood of 0. A growth of the coefficients greater than $C^m$ such as $m^m$ would have meant that the series diverges at every $z\neq 0$.
With this in mind we usually look for  $C^m$-type bounds. We  write $a\simeq_{C^m}b$ whenever $C^{-m}a\le b \le C^m a$ and say $a$ and $b$ are equivalent up to a constant $C^m$. If such an equivalence does not hold we will specify which inequality fails, if not both.
\end{remark}


We work with polynomials on random variables. Let's write  $\T=\{z\in \C : \, |z|=1\}$ for the torus.
From a probabilistic point of view, polynomials restricted to $\T^n$  can be interpreted as being evaluated at independent \emph{Steinhaus variables} (random variables uniformly distributed in the torus), so we call them \emph{Steinhaus polynomials}. For random vectors whose coordinates are independent Steinhaus variables we use the notation $w=(w_1,\ldots,w_n)$.

The following polynomial Kahane-Khinchin inequality was established in \cite[Theorem~9]{Ba02} for the scalar case and in \cite[Lemma~1.3]{CaDeSe16} for the general case (see also \cite[Theorems~8.10 and~25.9]{DeSe19_libro}). A remarkable characterization of random variables satisfying a similar result was obtained in \cite[Theorem~2.2]{KwSz}.

\begin{theorem}
\label{poiss}
 For every Banach space $X$, every $1\le p \le q <\infty$ and every polynomial $P:\C^n\to X$ we have
 \[
	\Big(\E\Big\Vert  P\Big(\sqrt{\frac{p}{q}} w\Big)  \Big\Vert^q\Big)^{1/q}
	\le
	  (\E\Vert  P (w) \Vert^p)^{1/p}.
\]
\end{theorem}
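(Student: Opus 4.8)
The plan is to reduce the multivariate, $X$-valued inequality to a one-variable statement, and to recognize the latter as a vector-valued form of Weissler's sharp hypercontractive estimate for holomorphic functions on the disc. Write $r=\sqrt{p/q}\le 1$ and let $D_r$ be the dilation $P\mapsto P(r\,\cdot\,)$; it acts coordinate-wise, $D_r=D_r^{(1)}\circ\cdots\circ D_r^{(n)}$, where $D_r^{(k)}$ contracts only the $k$-th variable. I would prove by induction on $n$ that $\|D_rP\|_{L^q(\T^n;X)}\le\|P\|_{L^p(\T^n;X)}$; this is the ``tensorization'' of the case $n=1$, which carries all the content.

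For the inductive step I would peel off the last variable. Set $D_r'=D_r^{(1)}\circ\cdots\circ D_r^{(n-1)}$ and, for fixed $w'=(w_1,\dots,w_{n-1})$, regard $G_{w'}(w_n):=(D_r'P)(w',w_n)$ as an $X$-valued polynomial in the single variable $w_n$. The one-variable case gives $\big(\E_{w_n}\|G_{w'}(rw_n)\|^q\big)^{1/q}\le\big(\E_{w_n}\|G_{w'}(w_n)\|^p\big)^{1/p}$ with the very same contraction $r=\sqrt{p/q}$. Raising to the $q$-th power, integrating in $w'$, applying this one-variable bound, then using Minkowski's integral inequality --- legitimate precisely because $q\ge p$ --- to interchange the $L^q(dw')$ and $L^p(dw_n)$ norms, and finally applying the induction hypothesis in the $n-1$ variables $w'$ (for each fixed $w_n$, to $D_r'$ acting on $P(\cdot\,,w_n)$), one arrives at $\|D_rP\|_{L^q(\T^n;X)}\le\|P\|_{L^p(\T^n;X)}$. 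So $1\le p\le q$ enters twice: to run Minkowski in the right direction, and to keep the contraction factor unchanged along the induction.

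Everything thus reduces to: for $F(z)=\sum_{k\le d}x_kz^k$ with $x_k\in X$, show $\|F(r\,\cdot\,)\|_{L^q(\T;X)}\le\|F\|_{L^p(\T;X)}$ with $r=\sqrt{p/q}$. In the scalar case this is Weissler's inequality, which I would organize as follows. Using the inner--outer factorization $F=\theta\cdot O$ (for a polynomial $\theta$ is a finite Blaschke product times a unimodular constant, and $O$ is again a polynomial, zero-free on the disc) and the bound $|\theta(rz)|\le 1$ on the disc, one has $|F(rz)|\le|O(rz)|$, so it suffices to treat zero-free (outer) $F$. For such $F$ every power $F^s$, $s>0$, is a well-defined holomorphic function, and replacing $F$ by $F^s$ shows that the inequality for the exponent pair $(p,q)$ is equivalent to the one for $(p/s,q/s)$ --- with the same $r$, since $r=\sqrt{(p/s)/(q/s)}$. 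Rescaling to $s=p/2$ leaves the base case $p=2$, $q_0:=2q/p\ge 2$, namely the hypercontractivity $\|C_{r_0}\|_{L^2(\T)\to L^{q_0}(\T)}\le 1$ of the dilation semigroup $C_{e^{-t}}=e^{-tN}$ on Hardy space in the sharp range $e^{-2t}\le 2/q_0$; this is Weissler's classical theorem. Finally, to pass from $X=\C$ to an arbitrary Banach space $X$ I would use a transference/approximation argument as in the references, exploiting that $z\mapsto\log\|F(z)\|_X$ is subharmonic on the disc.

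The hard part is exactly this one-dimensional sharp estimate. The ``soft'' route does not reach it: subharmonicity of $\log\|F\|_X$ gives $\|F(rw)\|^q\le\big(P[\|F^{\ast}\|^p](rw)\big)^{q/p}$ by Jensen (with $P[\,\cdot\,]$ the Poisson extension and $F^{\ast}$ the boundary function), and integrating in $w\in\T$ yields only $\|F(r\,\cdot\,)\|_q\le C_{p,q}\|F\|_p$ with $C_{p,q}^q=\frac{1}{2\pi}\int_0^{2\pi}\big(\tfrac{1-r^2}{|1-re^{i\theta}|^2}\big)^{q/p}\,d\theta$; and $C_{p,q}>1$ whenever $q>p$, because the Poisson kernel is a probability density and, by Jensen once more, its $(q/p)$-th power integrates to more than $1$. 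Reaching the optimal constant $1$ genuinely requires the rigidity of holomorphic functions --- which is also what lets the factor $\sqrt{p/q}$ improve on the generic (Nelson-type) factor $\sqrt{(p-1)/(q-1)}$ available without analyticity; identifying and invoking this sharp one-dimensional input, rather than any routine manipulation, is the whole point.
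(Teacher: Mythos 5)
The paper does not prove Theorem~\ref{poiss} at all: it quotes it from \cite{Ba02} (scalar case) and \cite{CaDeSe16}, \cite{DeSe19_libro} (vector-valued case), and your outline is essentially the argument of those sources — Weissler's sharp one-variable hypercontractive estimate on $H^p(\T)$ (with the inner--outer factorization and the power trick reducing $(p,q)$ to $(2,2q/p)$ at the same dilation $\sqrt{p/q}$), the passage to $X$-valued polynomials via subharmonicity of $\log\|F\|_X$ (one dominates $\|F(z)\|_X$ on the disc by $|h(z)|$, where $h$ is the scalar outer function with boundary modulus $\|F(e^{i\theta})\|_X$, and applies scalar Weissler to $h$), and tensorization by induction on the number of variables using Minkowski's integral inequality in the direction allowed by $q\ge p$. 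So your proposal is correct and follows the same route as the cited proof; the only step you leave as a gesture, the vector-valued transfer, is completed exactly by the outer-function domination just indicated.
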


A Walsh polynomial is a random variable $P(\varepsilon)$ where $P:\C^n\to X$ is a polynomial and $\varepsilon_1,\ldots,\varepsilon_n$ are independent Rademacher variables. Since for a Rademacher variable $\varepsilon_0$ we have that $\varepsilon_0^2=1$, Walsh polynomials can always be written as $P(\varepsilon)$ where $P$ is a tetrahedral polynomial. Therefore, we can use the tetrahedral notation introduced at the beginning of this section and write
\[P(\varepsilon)=\sum_{A\subseteq [n]}  x_{A} \varepsilon_{A}.\]

We recall now the geometric notions of type and cotype. A Banach space $X$ is said to have \emph{cotype} $2 \leq q < \infty$ if there is a constant $C\geq 1$ such that for every $n \in \mathbb{N}$ and every $x_{1}, \ldots , x_{n} \in X$ we have
\begin{equation} \label{defcotipo}
\Big( \sum_{i=1}^{n} \Vert x_{i} \Vert^{q} \Big)^{1/q}
\leq C  \Big( \mathbb{E} \Big\Vert \sum_{i=1}^{n} x_{i} \varepsilon_{i} \Big\Vert^{q} \Big)^{1/q} \,,
\end{equation}
and \emph{type} $1 \leq p \leq 2$ if there is a constant $C\geq 1$ such that for every $N \in \mathbb{N}$ and every $x_{1}, \ldots , x_{N} \in X$ we have
\begin{equation*}
\bigg( \E \Big\Vert \sum_{n=1}^{N} x_{n} \varepsilon_{n} \Big\Vert^{p}  \bigg)^{1/p} \leq C \Big( \sum_{n=1}^{N} \Vert x_{n} \Vert^{p} \Big)^{1/p} \,.
\end{equation*}
We say that $X$ has non-trivial cotype if it has cotype $q$ for some $2 \leq q < \infty$. If this is not the case, then the space is said to have trivial cotype. Analogously, $X$ has non-trivial type if has type $p$ for some $1<p\le 2$. We write $\cot(X)$ for the infimum over all $q$ such that $X$ has cotype $q$.

The Rademacher variables in the previous definitions can be replaced by Steinhaus random variables (changing the constant). This is a consequence of the the well known contraction principle (see \cite[Theorem~12.2]{DiJaTo95} or \cite[Corollary~4]{seig}), which will also be helpful for us here.

\begin{theorem}[Contraction principle]\label{contr}
Let $X$ be a Banach space and fix $1\le p <\infty$. For every $\lambda\in \R^n$ and every choice of vectors $x_{1}, \ldots , x_{n} \in X$ we have
\[
\Big(\E\Big\|\sum_{j=1}^n  \varepsilon_j \lambda_j x_j \Big\|^p \Big)^{1/p}
\le \|\lambda\|_\infty \Big(\E\Big\|\sum_{j=1}^n  \varepsilon_j x_j \Big\|^p \Big)^{1/p}.
\]
Similarly, if $\lambda\in \C^n$ (and $X$ is a complex Banach space) we get
\[
\Big(\E\Big\|\sum_{j=1}^n  \varepsilon_j \lambda_j x_j \Big\|^p \Big)^{1/p}
\le \frac{\pi}{2}\|\lambda\|_\infty \Big(\E\Big\|\sum_{j=1}^n  \varepsilon_j x_j \Big\|^p \Big)^{1/p}.
\]
\end{theorem}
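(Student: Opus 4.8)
The plan is to reduce by homogeneity to the case $\|\lambda\|_{\infty}\le 1$ (that is, $|\lambda_j|\le 1$ for all $j$), prove the real statement first by a convexity-and-symmetry argument, and then deduce the complex one from it using an integral representation of the points of the closed unit disc.

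\textbf{Real case.} Fix $x_1,\dots,x_n\in X$ and consider $F\colon[-1,1]^n\to\R$ given by $F(t)=\E\big\|\sum_{j=1}^n\varepsilon_j t_j x_j\big\|^p$. For each realization of the signs, $t\mapsto\big\|\sum_j\varepsilon_j t_j x_j\big\|$ is a norm composed with a linear map, hence convex, and $s\mapsto s^p$ is convex and nondecreasing on $[0,\infty)$; taking expectations preserves convexity, so $F$ is convex on the cube $[-1,1]^n$. A convex function on a compact convex set attains its maximum at an extreme point, and the extreme points of $[-1,1]^n$ are exactly the vertices $\{-1,1\}^n$. Since for every $\eta\in\{-1,1\}^n$ the map $(\varepsilon_j)_j\mapsto(\eta_j\varepsilon_j)_j$ preserves the joint law of the Rademachers, $F(\eta)=F(1,\dots,1)=\E\big\|\sum_j\varepsilon_j x_j\big\|^p$, and hence $F(\lambda)\le F(1,\dots,1)$, which is the claim after taking $p$-th roots. (One can also bypass the extreme-point principle: for $|\lambda_k|\le 1$ one writes $a\pm\lambda_k x_k=\tfrac{1\pm\lambda_k}{2}(a+x_k)+\tfrac{1\mp\lambda_k}{2}(a-x_k)$, so convexity of $\|\cdot\|^p$ gives $\tfrac12\|a+\lambda_k x_k\|^p+\tfrac12\|a-\lambda_k x_k\|^p\le\tfrac12\|a+x_k\|^p+\tfrac12\|a-x_k\|^p$; conditioning on the remaining signs and iterating over $k$ replaces each $\lambda_k$ by $1$.)

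\textbf{Complex case.} Splitting into real and imaginary parts only yields the constant $2$, so to reach $\pi/2$ I would write $w_j:=\lambda_j/\|\lambda\|_\infty=|w_j|e^{i\phi_j}$ and use the representation
\[
w_j=\frac{\pi}{2}\cdot\frac{1}{2\pi}\int_0^{2\pi}h_j(\theta)\,e^{i\theta}\,d\theta,\qquad h_j(\theta):=|w_j|\,\sg\!\big(\cos(\theta-\phi_j)\big),
\]
which follows from $\frac{1}{2\pi}\int_0^{2\pi}\sg(\cos\psi)\,e^{i\psi}\,d\psi=\frac{1}{2\pi}\int_0^{2\pi}|\cos\psi|\,d\psi=\tfrac{2}{\pi}$, the imaginary part vanishing by the symmetry $\psi\mapsto 2\pi-\psi$. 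Each $h_j$ is real with $|h_j(\theta)|\le1$. Then, for each fixed choice of signs,
\[
\sum_{j=1}^n\varepsilon_j\lambda_j x_j=\frac{\pi}{2}\|\lambda\|_\infty\cdot\frac{1}{2\pi}\int_0^{2\pi}e^{i\theta}\Big(\sum_{j=1}^n\varepsilon_j h_j(\theta)x_j\Big)\,d\theta,
\]
and Jensen's inequality for $y\mapsto\|y\|^p$ with respect to the probability measure $\tfrac{d\theta}{2\pi}$ on $[0,2\pi]$, together with $\|e^{i\theta}y\|=\|y\|$, gives $\big\|\sum_j\varepsilon_j\lambda_j x_j\big\|^p\le\big(\tfrac{\pi}{2}\|\lambda\|_\infty\big)^p\frac{1}{2\pi}\int_0^{2\pi}\big\|\sum_j\varepsilon_j h_j(\theta)x_j\big\|^p\,d\theta$. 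Taking $\E_\varepsilon$, interchanging it with the $\theta$-integral (nonnegative integrand), and bounding the inner expectation for each fixed $\theta$ by the already-proved real case (since $|h_j(\theta)|\le1$) yields $\E_\varepsilon\big\|\sum_j\varepsilon_j h_j(\theta)x_j\big\|^p\le\E_\varepsilon\big\|\sum_j\varepsilon_j x_j\big\|^p$, independently of $\theta$; the $\theta$-average collapses and the conclusion follows after taking $p$-th roots.

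I expect the only genuine technical point to be the complex case: producing the integral representation with the sharp weight $2/\pi$ and justifying the Fubini-type interchange. The real case is a routine convexity-and-symmetry argument with no real obstacle.
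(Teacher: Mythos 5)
Your proof is correct: the convexity/extreme-point argument for real coefficients and the integral representation $w_j=\tfrac{\pi}{2}\cdot\tfrac{1}{2\pi}\int_0^{2\pi}h_j(\theta)e^{i\theta}\,d\theta$ with $h_j$ real, $|h_j|\le 1$, combined with Jensen and the real case, does yield the sharp constant $\tfrac{\pi}{2}$. The paper itself gives no proof (it cites \cite[Theorem~12.2]{DiJaTo95} and \cite[Corollary~4]{seig}), and your argument is essentially the standard one found in those references, so there is nothing further to compare.
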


In particular, for every (complex) Banach space $X$, every $1\le p <\infty$ and every choice of vectors $x_{1}, \ldots , x_{n} \in X$ we have
\begin{equation}\label{zvse}
  \frac{2}{\pi} \Big(\E\Big\|\sum_{j=1}^n  \varepsilon_j x_j \Big\|^p \Big)^{1/p}
\le \Big(\E\Big\|\sum_{j=1}^n  w_j x_j \Big\|^p \Big)^{1/p}
\le \frac{\pi}{2} \Big(\E\Big\|\sum_{j=1}^n  \varepsilon_j x_j \Big\|^p \Big)^{1/p},
\end{equation}
so Rademacher and Steinhaus random sums have comparable moments.

We end this section setting some notation that will be used throughout.
As we have already mentioned, we write $[n] = \{ 1, \ldots, n  \}$. We also write
\begin{equation}\label{eq-subconjuntos}
\mathcal{P}_m[n]=\{A\subseteq [n] : \ |A|=m\}.
\end{equation}
Finally, given two vectors $x,y$ we write $xy$ for the pointwise product.

\section{General decoupling}\label{secdeco}

Our starting point is a remarkable result due to Kwapie\'n \cite[Theorem~2  and Remark~1]{Kw87} (see also \cite[Theorem~6.4.1 and Remark~6.4.1]{KwWo92}), originally stated only for real polynomials, but whose proof is
easily adapted to the complex case. It shows that if $P:\C^n\rightarrow X$ is an $m$-homogeneous tetrahedral polynomial with associated symmetric $m$-linear operator $M$ and $\Phi:X\rightarrow \R_{\geq0}$ is a convex function such that $\Phi(x)=\Phi(- x)$ for every $x\in X$, then
\begin{equation}\label{decosim}
	\E\Phi \big( \tfrac{1}{m^{m}} P(\xi) \big)
	\leq \E\Phi(M(\xi^{(1)},\ldots,\xi^{(m)}))
	\leq \E\Phi \big( \tfrac{m^{m}}{m!}P(\xi) \big) \,,
\end{equation}
for every vector $\xi=(\xi_1,\ldots,\xi_n)$ of independent symmetric entries, where  $\xi^{(1)},\ldots,\xi^{(m)}$ are iid copies of $\xi$. For \emph{gaussian random vectors} (which we always assume to have standard complex gaussian coordinates, and denote
$\gamma,\gamma^{(1)},\ldots,\gamma^{(m)}$), the inequalities can be improved to
\begin{equation}\label{decogauss}
	\E\Phi \big(\tfrac{1}{m^{m/2}} P(\gamma) \big)
	\leq \E\Phi(M(\gamma^{(1)},\ldots,\gamma^{(m)}))
	\leq \E\Phi\big( \tfrac{m^{m/2}}{m!}P(\gamma) \big).
\end{equation}

We are particularly interested on the $p$-norm of random polynomials, that is taking $\Phi = \Vert \punkt \Vert^{p}$ for some $1 \leq p < \infty$. A straightforward computation using Stirling's formula yields
\[
\frac{m^{m/2}}{m!} \leq  \frac{e^{m}}{m^{m/2}} \,.
\]
Using this and rearranging~\eqref{decogauss} in order to put $P$ in a central role we get
\[
\frac{m^{m/2}}{e^m} \big( \mathbb{E} \Vert M(\gamma^{(1)},\ldots,\gamma^{(m)})\Vert^{p} \big)^{1/p}\!
\leq \big( \mathbb{E} \Vert P(\gamma)\Vert^{p} \big)^{1/p}\!
\leq  m^{m/2} \big( \mathbb{E} \Vert M(\gamma^{(1)},\ldots,\gamma^{(m)})\Vert^{p} \big)^{1/p}
\]
or, to put it in other terms,
\begin{equation}  \label{gauss}
 (\E\|P(\gamma)\|^p)^{1/p}
 \simeq_{C^m} m^{m/2} (\E\|M(\gamma^{(1)},\ldots,\gamma^{(m)})\|^p)^{1/p}.
\end{equation}
In other words, the $p$-norm of a gaussian polynomial can be estimated up to a constant $C^m$ computing the $p$-norm of its associated $m$-linear operator.
As mentioned in Remark~\ref{alam}, circumstances where constants $C^m$ are tolerated are commonplace when working with polynomials of degree $m$. However, starting with~\eqref{decosim} and doing the same for arbitrary symmetric random vectors $\xi$ we deduce
\[
\frac{1}{e^m} \big(\E\|M(\xi^{(1)},\ldots,\xi^{(m)})\|^p \big)^{1/p}
\leq \big(\E\|P(\xi)\|^p \big)^{1/p}
\leq m^{m} \big(\E\|M(\xi^{(1)},\ldots,\xi^{(m)})\|^p \big)^{1/p}
\]
and a gap of order $m^m$ remains, which can be too big for some applications.\\

Our aim now is to show that, under  not too demanding assumptions on the space and on the random variables, we can get a `good' estimation (in the sense that constants like $C^{m}$ appear) as in~\eqref{gauss}. This is the main result of this section.

\begin{theorem}\label{decoup}
  	Let $X$ be a Banach space of finite cotype, let $\xi_0$ be a non-trivial symmetric random variable with finite $s$-norm for some $s> \cot(X)$ and fix $1\leq p <s$. There is a constant $C\ge 1$ such that for every $m$-homogeneous tetrahedral polynomial $P:\C^n\rightarrow X$ we have
  	\begin{equation}  	\label{decoupsim}
  	(\E\|P(\xi)\|^p)^{1/p}
  	\simeq_{C^m} m^{m/2} (\E\|M(\xi^{(1)},\ldots,\xi^{(m)})\|^p)^{1/p},
  	\end{equation}
  	where $\xi,\xi^{(1)},\ldots,\xi^{(m)}$ are independent random vectors whose coordinates are iid copies of~$\xi_0$.
\end{theorem}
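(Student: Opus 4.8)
The plan is to reduce the general symmetric case to the gaussian case \eqref{gauss}, which already gives the desired $C^m$-type estimate, via a two-step comparison: first replace $\xi$ by a gaussian vector $\gamma$ on the polynomial side, and simultaneously replace $\xi^{(1)},\ldots,\xi^{(m)}$ by $\gamma^{(1)},\ldots,\gamma^{(m)}$ on the multilinear side. Concretely, I would invoke the companion results announced in the introduction — Theorem~\ref{compara} together with Remark~\ref{remarkgauss} — which assert that for a Banach space of non-trivial cotype the underlying random vector of a random (tetrahedral, homogeneous) polynomial can be exchanged for another one, in particular for a gaussian one, at the cost of only a $C^m$ factor in the $p$-norm, provided the new variable has a finite $s$-norm with $s>\cot(X)$ and $p<s$. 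The same interchange principle, applied coordinate-by-coordinate (equivalently, viewing the $m$-linear operator $M$ as a polynomial in the $mn$ variables $\xi^{(1)},\ldots,\xi^{(m)}$, which is tetrahedral and $1$-homogeneous in each block), lets us pass from $(\E\|M(\xi^{(1)},\ldots,\xi^{(m)})\|^p)^{1/p}$ to $(\E\|M(\gamma^{(1)},\ldots,\gamma^{(m)})\|^p)^{1/p}$ up to another $C^m$ factor.

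Assembling these pieces, the chain reads
\[
(\E\|P(\xi)\|^p)^{1/p}
\ \simeq_{C^m}\ (\E\|P(\gamma)\|^p)^{1/p}
\ \simeq_{C^m}\ m^{m/2}(\E\|M(\gamma^{(1)},\ldots,\gamma^{(m)})\|^p)^{1/p}
\ \simeq_{C^m}\ m^{m/2}(\E\|M(\xi^{(1)},\ldots,\xi^{(m)})\|^p)^{1/p},
\]
where the middle equivalence is exactly \eqref{gauss}. Since each $\simeq_{C^m}$ multiplies the constant by a bounded power, the composite constant is still of the form $C^m$ (with a larger but still degree-independent $C$), and the $n$-independence is inherited because none of the three steps sees $n$. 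Finiteness of all the moments involved should be checked along the way: the hypothesis that $\xi_0$ has finite $s$-norm with $s>\cot(X)\ge 2>p$ guarantees $P(\xi)$ and $M(\xi^{(1)},\ldots,\xi^{(m)})$ are in $L^p$, and the gaussian quantities are finite since gaussians have moments of all orders.

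I expect the genuine work to be hidden in Theorem~\ref{compara} rather than in this deduction, so the role of this proof is mainly to organize the reductions and to verify that the hypotheses of Theorem~\ref{compara} are met in both applications — once with $P$ itself, and once with the multilinear operator regarded as a multi-homogeneous (degree-one in each of the $m$ blocks) tetrahedral object in a larger number of variables. The one subtlety to watch is that Theorem~\ref{compara} as stated presumably handles a single $m$-homogeneous tetrahedral polynomial; applying it to $M(\gamma^{(1)},\ldots,\gamma^{(m)})$ versus $M(\xi^{(1)},\ldots,\xi^{(m)})$ requires either a multilinear version of that interchange result or an iteration replacing one independent copy at a time (fixing the others), each replacement being a degree-one change covered by the cotype hypothesis with the standard type/cotype tools (the contraction principle, Theorem~\ref{contr}, and the Kahane–Khinchin type inequality, Theorem~\ref{poiss}). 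That iteration is where I would be most careful to keep the accumulated constant of the form $C^m$: replacing all $m$ copies costs the single-copy constant raised to the $m$-th power, which is still $C^m$.
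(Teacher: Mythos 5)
Your proposal is correct and follows essentially the same route as the paper: reduce \eqref{decoupsim} to the gaussian estimate \eqref{gauss} by exchanging $\xi$ for $\gamma$ on both sides via Theorem~\ref{compara} and Remark~\ref{remarkgauss}, at a total cost of the form $C^m$. The subtlety you flag about the multilinear side is resolved exactly by the observation you already make: $M(\xi^{(1)},\ldots,\xi^{(m)})$ is itself an $m$-homogeneous tetrahedral polynomial in $nm$ variables whose coordinates are iid copies of $\xi_0$, so Remark~\ref{remarkgauss} applies directly and no copy-by-copy iteration is needed.
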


Theorem~\ref{decoup} follows from the following polynomial version of \cite[Proposition~3.2]{pisier} that compares the $p$-norms of a tetrahedral polynomial evaluated in different random vectors. We show that under certain mild conditions on the space and on the
random vectors, these (the random  vectors) are essentially interchangeable.

\begin{theorem}\label{compara}
 Let $X$ be a Banach space, $\xi_{0}$ a non-trivial symmetric random variable and $\xi$ a random vector of iid copies of $\xi$.
\begin{enumerate}[(a)]
\item \label{landa1} There is a constant $C\ge 1$ such that
\begin{equation} \label{walshchica}
\big(\mathbb{E}\|P(w)\|^p\big)^{1/p}\leq C^m \big(\E\|P(\xi)\|^p \big)^{1/p}
\end{equation}
for every tetrahedral polynomial $P:\C^n\rightarrow X$ of degree $m$ and every $1 \leq p < \infty$.

\item \label{landa2} If $X$ has non-trivial cotype,  $\xi_0$ has finite $s$-norm for some $s> \cot(X)$ and $1\leq p <s$, then there is a constant $C\ge 1$ such that
\begin{equation} \label{propcomparacion}
\big(\mathbb{E}\|P(\xi)\|^p \big)^{1/p} \leq C^m \big(\E\|P(w)\|^p \big)^{1/p}
\end{equation}
for every tetrahedral polynomial $P:\C^n\rightarrow X$ of degree $m$.
\end{enumerate}
 \end{theorem}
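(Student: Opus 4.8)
The plan is to treat the two inequalities separately, reducing each --- via the polynomial Kahane--Khinchin inequality (Theorem~\ref{poiss}) to pass between exponents $p$, and after separating $P$ into its homogeneous components --- to the case of an $m$-homogeneous tetrahedral polynomial. Extracting homogeneous components is free for Steinhaus evaluations (average the variable against an independent Steinhaus rotation), and for $\xi$ it is absorbed into the constant by keeping the dilation $P(z)\mapsto P(\lambda z)$ explicit.

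For part~(a) I would avoid the multilinear operator. Since $\xi_0$ is symmetric, realise $\xi_k\stackrel{d}{=}\varepsilon_k|\xi_k|$ with $\varepsilon$ an independent Rademacher vector; conditioning on the moduli writes $\E\|P(\xi)\|^p$ as an average over $(|\xi_k|)_k$ of $p$-th moments of a Walsh polynomial with coefficients $x_A\prod_{k\in A}|\xi_k|$. For fixed $\varepsilon$ and the remaining moduli, the norm of that polynomial is an affine --- hence convex --- function of a single modulus, so Jensen lets us replace each $|\xi_k|$ by $a:=\E|\xi_0|$; writing $Q(z):=P(az)$ this gives $(\E\|P(\xi)\|^p)^{1/p}\ge(\E\|Q(\varepsilon)\|^p)^{1/p}$. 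Non-triviality of $\xi_0$ is exactly what makes $a>0$; if $a=\infty$ the right-hand side of~\eqref{walshchica} is infinite and there is nothing to prove, so assume $0<a<\infty$. It remains to bound $\E\|P(w)\|^p$ by $C^{mp}\E\|Q(\varepsilon)\|^p$, which splits into (i) a dilation estimate $\E\|P(w)\|^p\le C_1^{mp}\E\|Q(w)\|^p$ --- the contraction principle (Theorem~\ref{contr}) when $a\ge1$, and when $a<1$ the representation $P(w)=\E_u[K(u)\,P(auw)]$ for an independent Steinhaus $u$ with a kernel $K$ of size $\lesssim a^{-m}$ (and $P(auw)\stackrel{d}{=}P(aw)=Q(w)$ after averaging in $u$) --- and (ii) the equivalence of Steinhaus and Rademacher evaluations of a fixed-degree tetrahedral polynomial up to a constant $C_2^m$. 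The resulting $C=C_1C_2$ depends on $\xi_0$ but not on $n$, $P$ or $p$.

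Part~(b) is the substantial one, and this is where cotype and the moment hypothesis are used. I would route through the symmetric $m$-linear operator $M$ of the homogeneous component by means of the decomposition \eqref{eqdecom}, which I expect to be (a variant of)
\[
\tfrac{1}{m^m}\,P(z)\ =\ \E_g\,M\big(v^{(g,1)}(z),\dots,v^{(g,m)}(z)\big),
\]
where $g\colon[n]\to[m]$ is a uniformly random colouring and $v^{(g,i)}(z)$ keeps the coordinates of $z$ in the $i$-th colour class and zeroes the rest: since $M$ is symmetric and $P$ is tetrahedral, only the colourings restricting to a bijection on a given monomial's support contribute, with probability $m!/m^m$. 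Taking norms and Jensen gives, for \emph{every} symmetric $\xi$,
\[
\big(\E\|P(\xi)\|^p\big)^{1/p}\ \le\ m^m\,\big(\E_g\,\E_\xi\|M(v^{(g,1)}(\xi),\dots,v^{(g,m)}(\xi))\|^p\big)^{1/p},
\]
where the wasteful $m^m$ is harmless because it will cancel. For a \emph{fixed} $g$ the colour classes are disjoint, so the arguments $v^{(g,1)}(\,\cdot\,),\dots,v^{(g,m)}(\,\cdot\,)$ involve disjoint coordinate blocks and $M(v^{(g,1)}(\,\cdot\,),\dots,v^{(g,m)}(\,\cdot\,))$ is multilinear in them; replacing $\xi$ by $w$ one block at a time --- absorbing the other $m-1$ blocks into the $X$-valued coefficients $y_k$ --- reduces to $m$ comparisons of linear random sums, and the linear case $(\E\|\sum_k\xi_k y_k\|^p)^{1/p}\le C\,(\E\|\sum_k w_k y_k\|^p)^{1/p}$ (valid for $p<s$, the polynomial analogue of \cite[Proposition~3.2]{pisier}) contributes a total factor $C^m$. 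Finally, Proposition~\ref{decosim2} should be the sharp decoupling for Steinhaus variables, reading $(\E\|M(v^{(g,1)}(w),\dots,v^{(g,m)}(w))\|^p)^{1/p}\le C^m m^{-m}(\E\|P(w)\|^p)^{1/p}$, whose $m^{-m}$ cancels the $m^m$ above; multiplying the three bounds gives~\eqref{propcomparacion}.

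The technical core, and where the $s>\cot(X)$ hypothesis is really used, is the linear comparison: split $|\xi_k|=\min(|\xi_k|,K)+(|\xi_k|-K)_+$, handle the bounded part by the contraction principle (Theorem~\ref{contr}), and handle the tail by pitting the cotype lower bound $\sum_k\|y_k\|^q\le C_q^q\,\E\|\sum_k w_k y_k\|^q$ against $\E(|\xi_0|-K)_+^q\to0$ as $K\to\infty$ (valid because $q<s$ and $\E|\xi_0|^s<\infty$), fixing $K$ large enough to absorb the tail and then passing from the exponent $q$ to the given $p$ by Kahane--Khinchin; the constant is independent of $n$ and the $y_k$. The main obstacle throughout is arithmetical: every passage must cost only $C^m$, the $\pm m^m$ factors being tolerated only where they provably cancel. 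This is precisely why Kwapie\'n's inequality~\eqref{decosim} cannot be used directly --- it is two-sided but loses an $m^m$ that does \emph{not} cancel between the two sides of the comparison (one invokes its upper bound on one side and its lower bound on the other) --- and why the tailored decomposition \eqref{eqdecom} together with Proposition~\ref{decosim2} is needed instead.
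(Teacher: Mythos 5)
Your strategy is essentially the paper's. Part (a): replace the moduli of $\xi$ by their mean $a=\E|\xi_0|$ via Jensen (the paper instead computes $\E_\xi P(|\xi|w)$ and uses rotation invariance), then pass between Steinhaus and Rademacher by Lemma~\ref{lemma1}; your dilation/kernel treatment of non-homogeneous $P$ replaces the paper's use of Proposition~\ref{lemma3} and is fine. Part (b): your random-colouring identity is exactly an unbalanced variant of \eqref{eqdecom} -- in your notation $P=\tfrac{m^m}{m!}\E_g P_g$, where $P_g$ is the sum of the monomials that are rainbow for $g$, and $\tfrac{m^m}{m!}\le e^m$ plays the role of the paper's $\tfrac1{k^m}\binom{km}{m}\le e^m$ from Lemma~\ref{lemidcomb2}; the blockwise replacement of $\xi$ by $w$ through the linear comparison of \cite[Proposition~3.2]{pisier} is the paper's induction on $m$, and your cancellation of $m^m$ against $1/m!$ is the paper's cancellation in Proposition~\ref{decosim2}.

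Two steps need repair. First, in (a) you use $\xi_k\sim\varepsilon_k|\xi_k|$ with $\varepsilon$ independent of $|\xi_k|$; this is false for complex symmetric $\xi_0$ (e.g.\ $\xi_0$ uniform on $\{\pm1,\pm i\}$), and the complex case matters: Remark~\ref{remarkgauss} applies the theorem to standard complex gaussians. The fix is the paper's route: pass from $\xi$ to $\xi w$ by a conditional application of Lemma~\ref{lemma1} (cost $(1+\sqrt2)^m$), use rotation invariance $\xi w\sim|\xi|w$, and only then apply Jensen, noting $\E_\xi P(|\xi|w)=P(aw)$ for tetrahedral $P$; likewise the linear step in (b) needs the real/imaginary splitting for complex $\xi_0$. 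Second, the inequality you attribute to Proposition~\ref{decosim2}, namely $(\E\|M(v^{(g,1)}(w),\ldots,v^{(g,m)}(w))\|^p)^{1/p}\le C^m m^{-m}(\E\|P(w)\|^p)^{1/p}$ for a fixed colouring $g$, is not what that proposition states (it concerns averages over balanced ordered partitions, and your colourings are arbitrary), and you give no argument for it. It is true, with $C=e$: the left side equals $\tfrac1{m!}(\E\|P_g(w)\|^p)^{1/p}$, and the rainbow restriction does not increase the Steinhaus norm because $P_g(w)=\E_\varepsilon[\varepsilon_1\cdots\varepsilon_m P(T_g(\varepsilon)w)]$ with $T_g(\varepsilon)_i=\varepsilon_{g(i)}$, so Jensen and $T_g(\varepsilon)w\sim w$ finish it -- this sign-averaging identity (the mechanism of \eqref{lpi}) is the one ingredient your write-up is missing, and without it the crucial ``restriction costs at most $C^m$'' step is unsupported. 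Finally, your closing truncation sketch of the linear comparison is not a proof (the tail term cannot be absorbed by the cotype inequality in the way described; the genuine argument needs the Bernoulli-selector estimates behind \cite[Proposition~3.2]{pisier}), but since the paper itself quotes that result as a black box, citing it, as you do earlier, is enough.
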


\begin{remark}\label{remarkgauss}
The linear result \cite[Proposition~3.2]{pisier} is stated for Rademacher rather than Steinhaus variables. However, these are interchangeable by virtue of \cite[Lemma~4.2]{CaMaSe}  (see Lemma~\ref{lemma1} below). We can also replace  the Steinhaus variables with gaussian random variables for spaces with finite cotype using the theorem twice.
Indeed, let $X$ be a Banach space of finite cotype, let $P:\C^n\rightarrow X$ of degree $m$ be a  tetrahedral polynomial and fix $1 \leq p < \infty$.
Since gaussian random variables have finite $s$-norm for every $s$, using Theorem~\ref{compara} twice gives
\[
\big( \mathbb{E}\|P(\gamma)\|^p \big)^{1/p}
 \leq C_{1}^{m} \big( \E\|P(w)\|^p \big)^{1/p}
\leq (C_{1} C_{2})^{m} \big( \E\|P(\xi)\|^p \big)^{1/p}
\]
for every $\xi$  consisting of iid copies of some symmetric random variable regardless that it has finite $s$-norm or not
(note that in the first inequality we are using that $X$ has finite cotype, so that we can apply~\eqref{propcomparacion} to gaussian variables, while in the second one we just use~\eqref{walshchica} for the variables $\xi$).

If $\xi_{0}$ has finite $s$-norm for some $s > \max(\cot(X),p)$ we can use the same argument to obtain the converse inequality.
\end{remark}

Before we go any further, let us show how Theorem~\ref{decoup} follows from all this.

\begin{proof}[Proof of Theorem~\ref{decoup}]
Note that if $P$ is a tetrahedral $m$-homogeneous polynomial of $n$ variables, then its associated $m$-linear operator $M$ can also be regarded as an $m$-homogeneous tetrahedral polynomial of $nm$ variables. Then the result follows from the previous remark, since it allows us to reduce~\eqref{decoupsim} to~\eqref{gauss} by replacing $P(\xi)$ with $P(\gamma
)$, as well as $M(\xi^{(1)},\ldots,\xi^{(m)})$ with $M(\gamma^{(1)},\ldots,\gamma^{(m)})$, where $\gamma,\gamma^{(1)},\ldots,\gamma^{(m)}$ are independent gaussian random vectors.
\end{proof}

Next we discuss the necessity of the hypotheses in Theorems~\ref{decoup} and~\ref{compara}.
A simple computation shows that the hypothesis of $P$ being tetrahedral is needed in both theorems. Just taking $P(z) = z^{m}$, we have  $M(z^{(1)},\ldots,z^{(m)})=z^{(1)}\cdots z^{(m)}$ and
\[
\E|M(w^{(1)},\ldots,w^{(m)})|^p
  	=\E|w^{(1)} \cdots w^{(m)}|^p=1=\E|P(w)|^p.
\]
This shows that the inequality
\[
m^{m/2} (\E\|M(w^{(1)},\ldots,w^{(m)})\|^p)^{1/p} \leq C^{m} 	(\E\|P(w)\|^p)^{1/p}
\]
does not hold in general. Regarding Theorem~\ref{compara}, notice that
\[
\E|P(w)|^p=\E|w^m|^p=1.
\]
On the other hand, since $2|\gamma|^2$ has a chi-squared distribution with two degrees of freedom, a straightforward computation shows that for every $q>0$ we have
\[\E|\gamma|^q=\Gamma\Big(\frac{q}{2}+1\Big).\]
Using Stirling's formula we get
\[\E|P(\gamma)|^p=\E|\gamma|^{pm}
=\Gamma\Big(\frac{pm}{2}+1\Big)
\simeq_{C^m} m^{pm/2},\]
  	so~\eqref{propcomparacion} also fails.

The hypothesis of $X$ having non-trivial cotype is also necessary in Theorem~\ref{compara}. In \cite[page~253]{contrac} it is shown that~\eqref{propcomparacion} may fail for spaces with trivial cotype even for $m=1$. The same is true for Theorem~\ref{decoup}, but this requires some extra work.
A careful look at the proof of \cite[Theorem~2]{Kw87} shows that for every $m$-homogeneous tetrahedral polynomial $P$ and every symmetric convex function $\Phi:X\rightarrow \R_{\geq0}$ we have
\[
\E\Phi\Big(m^{-m}P\Big(\sum_{l=1}^m \xi^{(l)}\Big)\Big)
	\leq \E\Phi(M(\xi^{(1)},\ldots,\xi^{(m)})).
\]
In particular, letting $\Phi=\|\punkt \|^p$,
we get
\begin{equation} \label{ecdecosuma}
\Big(\E\Big\|P\Big(\sum_{l=1}^m \xi^{(l)}\Big)\Big\|^p\Big)^{1/p}
\leq m^m \big(\E\|M(\xi^{(1)},\ldots,\xi^{(m)})\|^p \big)^{1/p}.
\end{equation}
This allows to show that for Rademacher variables, Theorem~\ref{decoup} fails for \textit{ every} space $X$
 of trivial cotype.

\begin{remark} \label{remate} Let $X$ be a Banach space with trivial cotype and suppose that we can find some $C \geq 1$ so that
\begin{equation} \label{bluemoon}
m^{m/2} \big( \E\|M(\varepsilon^{(1)},\ldots,\varepsilon^{(m)})\|_X^p \big)^{1/p}
\leq C^{m} \big(\E \|P ( \varepsilon)  \Vert_{X}^{p} \big)^{1/p}
\end{equation}
for every tetrahedral $m$-homogeneous polynomial $P : \mathbb{C}^{n} \to X$.
With the notation from \eqref{eq-subconjuntos}), for $m,n\in\N$ let $\ell_\infty(\mathcal{P}_m[n])$ be the normed space $(\C^{\binom{n}{m}}, \|\punkt \hspace{-0.1cm}\|_\infty)$ where coordinates are indexed by the sets $A\in \mathcal{P}_m[n]$ rather than natural numbers.
Recall that a Banach space has trivial type if and only if the finite dimensional $\ell_\infty^k$ spaces can be included in $X$ for every $k\in\N$ with uniform distortion (see for example \cite[Theorem~14.1]{DiJaTo95}).
So, there is a constant $\widetilde{C}\geq 1$ such that
\begin{equation} \label{marcello}
m^{m/2} \big( \E\|M(\varepsilon^{(1)},\ldots,\varepsilon^{(m)})\|_{\ell_\infty(\mathcal{P}_m[n])}^p \big)^{1/p}
\leq \widetilde{C} C^{m} \big(\E \|P ( \varepsilon)  \Vert_{\ell_\infty(\mathcal{P}_m[n])}^{p} \big)^{1/p}
\end{equation}
for every $m,n\in\N$ and every tetrahedral $m$-homogenous polynomial $P : \mathbb{C}^{n} \to \ell_\infty(\mathcal{P}_m[n])$.\\
Denote the canonical basis of $\ell_\infty(\mathcal{P}_m[n])$ by $\{e_A\}_{|A|=m}$ and consider the $m$-homogeneous polynomial $P:\mathbb{C}^n\rightarrow \ell_\infty(\mathcal{P}_m[n])$ given by
\[
P(z)=\sum_{|A|=m} e_A z_A.
\]
This simply allocates each monomial in a separate coordinate.
Notice that for every $\varepsilon\in\{-1,1\}^n$ we have
\[
\|P(\varepsilon)\|_{\ell_\infty(\mathcal{P}_m[n])}=\sup_{|A|=m} |\varepsilon_A|=1 \,,
\]
So that
\[
\big( \E\|P(\varepsilon)\|^p \big)^{1/p}=1 \,.
\]
We estimate the norm of the $m$-linear form through~\eqref{ecdecosuma}, taking a sum of $m$ independent copies of $\varepsilon$. Observe that, for a fixed set $A$, the product $\prod_{i\in A}\Big|\sum_{l=1}^m \varepsilon_i^{(l)}\Big|$ is the biggest
possible for $\varepsilon^{(l)} \in \{-1,1\}^{n}$ if $\varepsilon_i^{(l)} =1$ for every $ i \in A$ and  $l =1, \ldots ,m$. Therefore
\[
\Big(\E\Big\|P\Big(\sum_{l=1}^m \varepsilon^{(l)}\Big)\Big\|^p\Big)^{1/p}
=\Big(\E\sup_{|A|=m}\prod_{i\in A}\Big|\sum_{l=1}^m \varepsilon_i^{(l)}\Big|^p\Big)^{1/p}
\leq m^{m} \,.
\]
We obtain a lower estimate  by an infinite-monkey-theorem type of argument: arrange the Rademacher variables in an $n\times m$ matrix $(\varepsilon^{(l)}_i)_{i,l}$. Now if there are (at least) $m$ rows where every entry is 1, we can choose $A$ to index those rows to get
\[
\sup_{|A|=m}\prod_{i\in A}\Big|\sum_{l=1}^m \varepsilon_i^{(l)}\Big|=m^m.
\]
Letting $n$ tend to infinity, the probability of finding $m$ rows of ones tends to 1.
Explicitly, the probability that a given row has only ones is $2^{-m}$, so the number of rows of ones follows a binomial distribution $\text{Bi}(n,2^{-m})$. Since the probability of having a fixed number of successes ($m$ successes in our case) tends to 1 as $n$ goes to infinity, for a sufficiently large $n$ we have
\[
\mathbb{P}\Big(\sup_{|A|=m}\prod_{i\in A}\Big|\sum_{l=1}^m \varepsilon_i^{(l)}\Big|=m^m\Big)\geq \frac{1}{2}
 \,.
\]
So, from Chebyshev's inequality we get
\[
\frac{1}{2} m^{m} \leq \Big(\E\sup_{|A|=m}\prod_{i\in A}\Big|\sum_{l=1}^m \varepsilon_i^{(l)}\Big|^p\Big)^{1/p}
=\Big(\E\Big\|P\Big(\sum_{l=1}^m \varepsilon^{(l)}\Big)\Big\|^p\Big)^{1/p}  \,.
\]
Finally, if~\eqref{marcello} holds, using all these and~\eqref{ecdecosuma} we deduce
\begin{align*}
\frac{1}{2} m^{m/2}  \leq  m^{-m/2} \Big(\E\Big\|P\Big(\sum_{l=1}^m \varepsilon^{(l)}\Big)\Big\|^p\Big)^{1/p}
\leq  m^{m/2} \big( \E\|M(\varepsilon^{(1)},\ldots,\varepsilon^{(m)})\|^p \big)^{1/p}
\leq \widetilde{C} C^{m} \,,
\end{align*}
and this leads to a contradiction, showing that there is no $C\geq 1$ so that \eqref{bluemoon} holds.
\end{remark}

We proceed now with the proof of Theorem~\ref{compara}. It requires to establish first what we call decoupling on partitions (a particular sort of decoupling inequalities).

\subsection{Decoupling in partitions}

One of the  main ideas for the proof of Theorem~\ref{compara} is to device an alternative  decoupling method, associating to each $m$-homogeneous tetrahedral polynomial a family of $m$-linear operators. This is inspired by a combinatorial identity proved in \cite{RzWo_19} which is presented in Lemma~\ref{lemidcomb2}.\\
Given any  $m$-homogeneous tetrahedral polynomial of $n$-variables $P(z)=\sum x_A z_A$, without loss of generality (making $n$ bigger if necessary) we may assume that $n=km$ for some $k\in\N$.
For each  ordered partition $\pi=(B_1,\ldots,B_m)$ of $[n]$ in $m$ sets  of $k$ elements each, we define the following $m$-linear mapping:
\begin{equation}
\label{eqlpi}
L_\pi(z^{(1)},\ldots,z^{(m)})=\sum_{i_1\in B_1}\ldots \sum_{i_m\in B_m} x_{\{i_1,\ldots,i_m\}} z^{(1)}_{i_1}\ldots z^{(m)}_{i_m}.
\end{equation}
Observe that $L_\pi(z,\ldots,z)$ can be obtained from $P(z)$ by keeping only the monomials whose index $A$ has exactly one element in each set $B_l$ of the partition. Let us make this statement more precise. Consider the linear transformation $T_\pi:\C^m\rightarrow\C^n$ given by
\[
T_\pi(e_l)=\sum_{j\in B_l}e_j.
\]
Take some Rademacher random vector $\varepsilon=(\varepsilon_1,\ldots,\varepsilon_m)$ and note that, for each fixed $A$, the expectation
\[
\E_\varepsilon\Big[\varepsilon_1\cdots\varepsilon_m\prod_{i\in A} T_\pi(\varepsilon)_i\Big]
\]
is $1$ if $A$ has exactly one element in each set  of the partition and $0$ otherwise. Then
\[
\E_\varepsilon[\varepsilon_1\cdots\varepsilon_m P(T_\pi(\varepsilon) z)]= \sum_{|A|=m} x_A \E_\varepsilon\Big[\varepsilon_1\cdots\varepsilon_m\prod_{i\in A} T_\pi(\varepsilon)_i\Big] z_A \,,
\]
and (recall that $T_\pi(\varepsilon) z$ denotes the pointwise product)
\begin{equation}\label{lpi}
L_\pi(z,\ldots,z)=\E_\varepsilon[\varepsilon_1\cdots\varepsilon_m P(T_\pi(\varepsilon) z)] \,.
\end{equation}

We want to see now how can we recover $P(z)$ by using the $L_{\pi}$s. Loosely speaking, if we sum over all possible partitions, eventually all monomials in $P(z)$ appear in the sum, and they do it the
same amount of times. Let us expose this more systematically. Let
\begin{equation}
\label{party}
\Pi_{k,m}=\Big\{\pi=(B_1,\ldots,B_m)
\in \mathcal{P}_k([n])^m : \ \bigcup_{l=1}^m B_l =[n]
\Big\}
\end{equation}
be the familiy of all ordered partitions $\pi$ of $[n]$ in $m$ sets of $k$-elements (note that the disjointness of  the sets $B_l$ is automatic,  since they have $k$ elements and $n=km$). A symmetry
argument shows that there is some $N(k,m)\in\N$ so that
\[
\sum_{\pi\in \Pi_{k,m}} L_\pi (z,\ldots,z)=N(k,m)P(z),
\]
since each monomial appears the same number $N(k,m)$ of times.  So, the polynomial $P$ can be written as (almost) an average of this family of multilinear operators $L_\pi$ evaluated at $(z,\ldots,z)$.
The key point for us now is to estimate the growth of this number $N(k,m)$, showing that it is (up to a constant $C^{m}$) like $|\Pi_{k,m}|$ (see~\eqref{eqdecom}). This relies in the following
combinatorial  equality.

\begin{lemma}\label{lemidcomb2}
Let $V$ be a vector space and take $n=km$ where $k,m \in \N$. Given a family $\{ v_{A} \colon A \subseteq [n], \, \vert A \vert =m \}\subseteq V$ we have
\begin{equation}
\label{idcomb2}
\sum_{|A|=m} v_A = \frac{1}{k^m}\binom{km}{m} \frac{1}{|\Pi_{k,m}|} \sum_{\pi\in \Pi_{k,m}} \sum_{i_1\in B_1}\ldots \sum_{i_m\in B_m} v_{\{i_1,\ldots,i_m\}}.
\end{equation}
\end{lemma}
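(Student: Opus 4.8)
The plan is to compute the right-hand side by interchanging the order of summation, fixing an $m$-subset $A = \{j_1,\dots,j_m\} \subseteq [n]$ and counting in how many ways it arises as $\{i_1,\dots,i_m\}$ with $i_\ell \in B_\ell$ as $\pi = (B_1,\dots,B_m)$ ranges over $\Pi_{k,m}$. By the symmetry argument already sketched in the excerpt, this count is some number $N(k,m)$ independent of $A$; the whole content of the lemma is the explicit evaluation
\[
N(k,m) = \frac{1}{k^m}\binom{km}{m}|\Pi_{k,m}|.
\]
So the task reduces to a clean double-counting identity, and I would organize it as an equality of cardinalities of the set
\[
S = \bigl\{(\pi, (i_1,\dots,i_m)) : \pi=(B_1,\dots,B_m)\in\Pi_{k,m},\ i_\ell\in B_\ell \text{ for all }\ell\bigr\},
\]
counted two ways.

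First I would count $|S|$ by first choosing $\pi$: for each of the $|\Pi_{k,m}|$ ordered partitions there are exactly $k$ choices for each $i_\ell$ (one per block, $m$ blocks), giving $|S| = k^m |\Pi_{k,m}|$. Second, I would count $|S|$ by first choosing the tuple $(i_1,\dots,i_m)$ of distinct elements of $[n]$ — here one must observe that in any such pair the $i_\ell$ are automatically distinct since they lie in disjoint blocks — and then counting the partitions $\pi$ compatible with that tuple, i.e.\ with $i_\ell\in B_\ell$. By symmetry this latter count depends only on the fact that we prescribed one element in each block, not on which elements, call it $N'(k,m)$; so $|S| = m!\binom{n}{m} N'(k,m) = \binom{km}{m} m!\, N'(k,m)$, the factor $m!$ accounting for orderings of a fixed $m$-set. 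Comparing, $N'(k,m) = k^m|\Pi_{k,m}|/(\binom{km}{m}m!)$. Finally, grouping the tuples by their underlying set $A$ (each set corresponds to $m!$ tuples, all contributing the same $v_A$), the coefficient of $v_A$ on the right-hand side of~\eqref{idcomb2} is
\[
\frac{1}{k^m}\binom{km}{m}\frac{1}{|\Pi_{k,m}|}\cdot m!\, N'(k,m) = 1,
\]
which is exactly the claim. An alternative, essentially equivalent route is the probabilistic one suggested by the $L_\pi$ machinery: pick $\pi$ uniformly at random from $\Pi_{k,m}$ and note that for fixed $A$ the probability that $A$ is a transversal of $\pi$ (one element in each block) is $\mathbb{P} = k^m/\binom{km}{m}$ by a direct count, so $\mathbb{E}_\pi\bigl[\sum_{i_\ell\in B_\ell} v_{\{i_1,\dots,i_m\}}\bigr] = \sum_{|A|=m}\mathbb{P}\cdot v_A$, and dividing through by $\mathbb{P}$ gives~\eqref{idcomb2}.

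I expect the only mildly delicate point to be the justification that the number of $\pi\in\Pi_{k,m}$ having a prescribed element in each prescribed block position is independent of the particular elements chosen — but this is immediate from the transitivity of the symmetric group $\mathfrak{S}_n$ acting on $[n]$, which permutes $\Pi_{k,m}$ and carries one such configuration to any other. Everything else is bookkeeping with binomial coefficients; no analysis or Banach-space input is needed here, so there is no real obstacle beyond setting up the double count carefully and tracking the factor $m!$ that converts between ordered tuples $(i_1,\dots,i_m)$ and unordered index sets $A$.
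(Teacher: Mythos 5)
Your argument is correct. It shares the paper's skeleton --- interchange the order of summation and observe that, for a fixed $m$-set $A$, the number of $\pi\in\Pi_{k,m}$ meeting $A$ in exactly one element per block is a constant $N(k,m)$ independent of $A$ --- but you evaluate the constant by a genuinely different and lighter route. The paper computes both quantities explicitly: a sequential choice argument gives $N(k,m)=m!\prod_{l=1}^m\binom{(k-1)l}{k-1}$, and a separate product manipulation shows $|\Pi_{k,m}|=\prod_{l=1}^m\binom{kl}{k}=\frac{1}{k^m}\binom{km}{m}\,m!\prod_{l=1}^m\binom{(k-1)l}{k-1}$, after which the identity drops out. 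You instead double count the incidence set $S$ of pairs $(\pi,(i_1,\dots,i_m))$ with $i_\ell\in B_\ell$: counting by $\pi$ gives $|S|=k^m|\Pi_{k,m}|$, counting by tuples (using transitivity of $\mathfrak{S}_n$ to see the per-tuple count is constant) gives $|S|=\binom{km}{m}m!\,N'(k,m)$ with $N(k,m)=m!\,N'(k,m)$, so only the ratio $N(k,m)/|\Pi_{k,m}|=k^m/\binom{km}{m}$ is ever needed and no closed form for either factor appears. Your probabilistic reformulation (the probability that $A$ is a transversal of a uniform $\pi$ equals $k^m/\binom{km}{m}$) is the same computation in different clothing and is also fine. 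What your approach buys is brevity and the avoidance of the binomial-product bookkeeping; what the paper's buys is the explicit formula for $|\Pi_{k,m}|$ and for the multiplicity $N(k,m)$, which makes the normalization in the surrounding discussion (the factor $\frac{1}{k^m}\binom{km}{m}$ and the bound \eqref{dalla}) completely transparent. Both are complete proofs; your factor-of-$m!$ conversion between ordered tuples and unordered sets is handled correctly, which is the only place one could slip.
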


Before we proceed to the proof, let us note that, just taking $v_A=x_A z_A$ in the previous combinatorial identity  and using~\eqref{eqlpi} we immediately get
\begin{equation}
\label{eqdecom}
P(z)
=\frac{1}{k^m}\binom{km}{m} \frac{1}{|\Pi_{k,m}|} \sum_{\pi\in \Pi_{k,m}} L_\pi (z,\ldots,z).
\end{equation}

\begin{proof}[Proof of Lemma~\ref{lemidcomb2}]
First notice that
\begin{align}
\label{idcomb22}
\sum_{\pi\in \Pi_{k,m}} &\sum_{i_1\in B_1}\ldots \sum_{i_m\in B_m} v_{\{i_1,\ldots,i_m\}}
=\sum_{|A|=m} \Big( \sum_{\substack{\pi\in \Pi_{k,m}\\|A\cap B_l|=1, \ \forall l}}1\Big) v_A \notag
\\ &=\sum_{|A|=m}
\underbrace{m\vphantom{\sum_{|A|=m}}}_{\substack{\text{choose} \\ A\cap B_1}}
\underbrace{\binom{(k-1)m}{k-1}\vphantom{\sum_{|A|=m}}}
_{\substack{\text{choose} \\ A^c\cap B_1}}
\underbrace{(m-1)\vphantom{\sum_{|A|=m}}}_{\substack{\text{choose} \\ A\cap B_2}}
\underbrace{\binom{(k-1)(m-1)}{k-1}\vphantom{\sum_{|A|=m}}}
_{\substack{\text{choose} \\ A^c\cap B_2}}
\ldots 1 \binom{k-1}{k-1} v_A \notag
\\ &=m! \prod_{l=1}^m \binom{(k-1)l}{k-1} \sum_{|A|=m} v_A.
\end{align}
On the other hand, we have
\begin{align*}
|\Pi_{k,m}|&=\prod_{l=1}^m \binom{kl}{k}
=\frac{1}{k^m}\prod_{l=1}^m \binom{(k-1)l}{k-1} \frac{(k-1)(l-1)!}{(k-1)l!}\frac {kl!}{k(l-1)!}
\\ &=\frac{1}{k^m} \prod_{l=1}^m\frac{(k-1)(l-1)!}{(k-1)l!}
\prod_{l=1}^m\frac {kl!}{k(l-1)!}
\prod_{l=1}^m \binom{(k-1)l}{k-1}
\\ &=\frac{1}{k^m}\frac{km!}{(k-1)m!}\prod_{l=1}^m \binom{(k-1)l}{k-1}
=\frac{1}{k^m}\binom{km}{m} m!\prod_{l=1}^m \binom{(k-1)l}{k-1}.
\end{align*}
Joining this with~\eqref{idcomb22} gives the conclusion.
\end{proof}

Let us note that, by definition, we have $\binom{km}{m} \geq k^{m}$ for every $k$ and $m$. On the other hand, using Stirling's formula yields
\[
\binom{km}{m}\leq \frac{e}{2\pi \sqrt{m}} \sqrt{\frac{k}{k-1}} k^{m} \Big(\frac{k}{k-1}\Big)^{(k-1)m}
\leq \frac{e}{2\pi \sqrt{m}} \sqrt{2} k^{m} e^{m}\leq e^{m} k^{m},
\]
for $k \geq 2$ (the inequality holds trivially for $k=1$). Then,
\begin{equation} \label{dalla}
1\le \frac{1}{k^m}\binom{km}{m}\leq e^m
\end{equation}
for every $k$ and $m$. With all this at hand we can give the following decoupling inequality.

\begin{proposition}\label{decosim2}
Let $P:\C^n\rightarrow X$ be an $m$-homogeneous tetrahedral polynomial. If $\xi=(\xi_1,\ldots,\xi_n)$ is a vector of independent symmetric random variables and $\xi^{(1)},\ldots,\xi^{(m)}$ are iid copies of $\xi$, then
\begin{multline*}
\frac{1}{|\Pi_{k,m}|} \sum_{\pi\in \Pi_{k,m}}
	(\E\|L_\pi(\xi^{(1)},\ldots,\xi^{(m)})\|^p)^{1/p} \\ \leq
	(\E\|P(\xi)\|^p)^{1/p} \\
	\leq  \frac{e^{m}}{|\Pi_{k,m}|} \sum_{\pi\in \Pi_{k,m}}
	(\E\|L_\pi(\xi^{(1)},\ldots,\xi^{(m)})\|^p)^{1/p}
\end{multline*}
for every $1 \leq p < \infty$.
\end{proposition}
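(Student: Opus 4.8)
The plan is to combine the exact decomposition \eqref{eqdecom} of $P$ as a (renormalized) average of the operators $L_\pi(z,\dots,z)$ with the averaged representation \eqref{lpi} of each single $L_\pi$ in terms of $P$, controlling the renormalization constant by \eqref{dalla}. A preliminary remark trivializes the difference between the decoupled and the non-decoupled operators: for each fixed ordered partition $\pi=(B_1,\dots,B_m)\in\Pi_{k,m}$ one has
\[
\E\big\|L_\pi(\xi^{(1)},\dots,\xi^{(m)})\big\|^p=\E\big\|L_\pi(\xi,\dots,\xi)\big\|^p.
\]
Indeed, by \eqref{eqlpi} the value $L_\pi(z^{(1)},\dots,z^{(m)})$ only involves the coordinates $z^{(l)}_j$ with $j\in B_l$; since the blocks $B_1,\dots,B_m$ are pairwise disjoint and the coordinates of $\xi$ are independent, the two families $\big((\xi_j)_{j\in B_1},\dots,(\xi_j)_{j\in B_m}\big)$ and $\big((\xi^{(1)}_j)_{j\in B_1},\dots,(\xi^{(m)}_j)_{j\in B_m}\big)$ have the same joint distribution (in both cases it is a family of $n$ independent variables, one per index $j\in[n]$, with the law of $\xi_j$ at position $j$), and $L_\pi$ is the same function of each. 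Hence it suffices to prove both inequalities of the Proposition with $L_\pi(\xi,\dots,\xi)$ in place of $L_\pi(\xi^{(1)},\dots,\xi^{(m)})$.

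For the upper estimate I would take $L^p(X)$-norms in \eqref{eqdecom} and apply the triangle inequality together with \eqref{dalla}:
\[
(\E\|P(\xi)\|^p)^{1/p}\le\frac{1}{k^m}\binom{km}{m}\frac{1}{|\Pi_{k,m}|}\sum_{\pi\in \Pi_{k,m}}(\E\|L_\pi(\xi,\dots,\xi)\|^p)^{1/p}\le\frac{e^m}{|\Pi_{k,m}|}\sum_{\pi\in \Pi_{k,m}}(\E\|L_\pi(\xi,\dots,\xi)\|^p)^{1/p},
\]
which, together with the preliminary remark, is the right-hand inequality. For the lower estimate I would fix $\pi$ and plug the random vector $\xi$ into \eqref{lpi}, so that $L_\pi(\xi,\dots,\xi)=\E_\varepsilon[\varepsilon_1\cdots\varepsilon_m\,P(T_\pi(\varepsilon)\xi)]$ holds pointwise on the probability space; applying Jensen's inequality for the convex map $x\mapsto\|x\|^p$ to the average over $\varepsilon$, then integrating in $\xi$ and using Fubini, gives
\[
\E\|L_\pi(\xi,\dots,\xi)\|^p\le\E_\varepsilon\,\E_\xi\,\|P(T_\pi(\varepsilon)\xi)\|^p.
\]
For each fixed $\varepsilon\in\{-1,1\}^m$ the vector $T_\pi(\varepsilon)\xi$ is obtained from $\xi$ by multiplying the coordinates lying in each block $B_l$ by the sign $\varepsilon_l$; since the $\xi_j$ are independent and symmetric, $T_\pi(\varepsilon)\xi$ has the same distribution as $\xi$, so $\E_\xi\|P(T_\pi(\varepsilon)\xi)\|^p=\E\|P(\xi)\|^p$, and therefore $(\E\|L_\pi(\xi,\dots,\xi)\|^p)^{1/p}\le(\E\|P(\xi)\|^p)^{1/p}$ for every $\pi\in\Pi_{k,m}$. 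Averaging this over $\pi$ with uniform weights and invoking the preliminary remark yields the left-hand inequality.

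The argument is essentially bookkeeping once the decomposition \eqref{eqdecom}, the identity \eqref{lpi} and the elementary bound \eqref{dalla} are in hand, so there is no serious obstacle. The two points that deserve care are precisely the distributional identities used above: decoupling the entries of $L_\pi$ costs nothing because the blocks of the partition $\pi$ are disjoint, and replacing $\xi$ by $T_\pi(\varepsilon)\xi$ costs nothing because $\xi$ is symmetric. One should also observe that the expectations in sight are finite or infinite simultaneously, which again follows from \eqref{eqdecom} writing $P(\xi)$ as a finite linear combination of the $L_\pi(\xi,\dots,\xi)$, so both inequalities also hold (trivially) in the degenerate case.
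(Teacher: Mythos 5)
Your proposal is correct and follows essentially the same route as the paper: the same preliminary identity exploiting the disjointness of the blocks, the upper bound via \eqref{eqdecom}, the triangle inequality and \eqref{dalla}, and the lower bound via \eqref{lpi} combined with the symmetry of $\xi$ (the paper uses Minkowski's inequality to pull the $\varepsilon$-average outside the $L^p(\xi)$-norm, while you apply Jensen pointwise and integrate, which is an equivalent step). No gaps.
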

\begin{proof}
First of all, let us note that, in a way, $L_\pi(\xi,\ldots,\xi)$ is already decoupled due to its algebraic structure since the index sets $B_1,\ldots, B_l$ never overlap. In fact, notice that replacing each coordinate $\xi_i$ with $\xi_i^{(l)}$ whenever $i\in B_l$ we deduce
\begin{equation} \label{almeida}
\E\Vert L_\pi(\xi,\ldots,\xi ) \Vert^{p} = \E\Vert L_\pi(\xi^{(1)},\ldots,\xi^{(m)}) \Vert^{p} \,.
\end{equation}

On the one hand, from~\eqref{eqdecom} and~\eqref{dalla}  we have
\begin{multline*}
(\E\|P(\xi)\|^p)^{1/p}
\leq \frac{e^{m}}{|\Pi_{k,m}|} \Big( \E \Big\| \sum_{\pi\in \Pi_{k,m}}
L_\pi(\xi,\ldots,\xi) \Big\|^p \Big)^{1/p}\\
	\leq  \frac{e^{m}}{|\Pi_{k,m}|} \sum_{\pi\in \Pi_{k,m}}
	(\E\|L_\pi(\xi,\ldots,\xi)\|^p)^{1/p} \,.
\end{multline*}
On the other hand, for each fixed partition $\pi$ we can use~\eqref{lpi} to have
\begin{equation} \label{lligassa}
\begin{split}
(\E_{\xi}\|L_\pi(\xi,\ldots,& \xi)\|^p)^{1/p}
=  \big(\E_{\xi}\big\|  \mathbb{E}_{\varepsilon} [\varepsilon_1\cdots\varepsilon_m P(T_\pi(\varepsilon) \xi) ] \big\|^p \big)^{1/p} \\
& \leq  \mathbb{E}_{\varepsilon}  \big(\E_{\xi}\|\varepsilon_1\cdots\varepsilon_m P(T_\pi(\varepsilon) \xi) \|^p \big)^{1/p}
=  \mathbb{E}_{\varepsilon}  \big(\E_{\xi}\| P(T_\pi(\varepsilon) \xi) \|^p \big)^{1/p}
\end{split}
\end{equation}
Now, since $\xi$ is symmetric, changing its sign does not affect its distribution, so that $T_\pi(\varepsilon) \xi \sim \xi$ and, then
\begin{multline*}
\frac{1}{|\Pi_{k,m}|} \sum_{\pi\in \Pi_{k,m}} (\E\|L_\pi(\xi,\ldots,\xi)\|^p)^{1/p}  \\
\leq \frac{1}{|\Pi_{k,m}|}\sum_{\pi\in \Pi_{k,m}}  \mathbb{E}_{\varepsilon}  \big(\E_{\xi}\| P( \xi) \|^p \big)^{1/p}
= (\E_{\xi} \|P(\xi)\|^p)^{1/p} \,.
\end{multline*}
This, in view of~\eqref{almeida}, completes the proof.
\end{proof}

With the same idea we can get a decoupling inequality involving convex functions, very much in the spirit of Kwapie\'n's result presented in~\eqref{decosim}.

\begin{proposition}\label{decosim3}
Let $P:\C^n\rightarrow X$ be an $m$-homogeneous tetrahedral polynomial and $\Phi:X\rightarrow \R_{\geq0}$ a convex function such that $\Phi(x)=\Phi(- x)$ for every $x\in X$.  If $\xi=(\xi_1,\ldots,\xi_n)$ is a vector of independent symmetric random variables and $\xi^{(1)},\ldots,\xi^{(m)}$ are iid copies of $\xi$ we have
	\[
	\E\Phi\Big(\frac{k^{m}}{\binom{km}{m}}P(\xi)\Big)
	\leq \frac{1}{|\Pi_{k,m}|} \sum_{\pi\in \Pi_{k,m}} \E\Phi(L_\pi(\xi^{(1)},\ldots,\xi^{(m)}))
	\leq \E\Phi(P(\xi)).
	\]
\end{proposition}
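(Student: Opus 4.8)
The plan is to mimic the proof of Proposition~\ref{decosim2}, but now exploiting convexity of $\Phi$ via Jensen's inequality in place of the triangle/$p$-norm inequalities. The two bounds come from the two directions in which the decomposition~\eqref{eqdecom} and the averaging identity~\eqref{lpi} can be read.

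For the upper bound, I would start from~\eqref{lpi}, which writes $L_\pi(\xi,\ldots,\xi)=\E_\varepsilon[\varepsilon_1\cdots\varepsilon_m P(T_\pi(\varepsilon)\xi)]$. Since $\Phi$ is convex, Jensen's inequality gives $\Phi(L_\pi(\xi,\ldots,\xi))\le \E_\varepsilon\Phi(\varepsilon_1\cdots\varepsilon_m P(T_\pi(\varepsilon)\xi))$; because $\Phi$ is even and $\varepsilon_1\cdots\varepsilon_m=\pm1$, the sign disappears, leaving $\E_\varepsilon\Phi(P(T_\pi(\varepsilon)\xi))$. Now taking $\E_\xi$ and using that $\xi$ is symmetric (so $T_\pi(\varepsilon)\xi\sim\xi$ coordinatewise, as already observed in the proof of Proposition~\ref{decosim2}) yields $\E_\xi\Phi(L_\pi(\xi,\ldots,\xi))\le \E_\xi\Phi(P(\xi))$. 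Averaging over $\pi\in\Pi_{k,m}$ and invoking~\eqref{almeida} (which equally holds with $\|\punkt\|^p$ replaced by $\Phi$, since it is just a change of variables in the coordinates indexed by the $B_l$) gives the right-hand inequality.

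For the lower bound, I would use~\eqref{eqdecom}, namely $P(z)=\frac{1}{k^m}\binom{km}{m}\frac{1}{|\Pi_{k,m}|}\sum_{\pi}L_\pi(z,\ldots,z)$, so that $\frac{k^m}{\binom{km}{m}}P(\xi)=\frac{1}{|\Pi_{k,m}|}\sum_{\pi}L_\pi(\xi,\ldots,\xi)$ is a genuine average (convex combination) of the $L_\pi(\xi,\ldots,\xi)$. Applying Jensen's inequality to the convex $\Phi$ gives $\Phi\big(\frac{k^m}{\binom{km}{m}}P(\xi)\big)\le \frac{1}{|\Pi_{k,m}|}\sum_{\pi}\Phi(L_\pi(\xi,\ldots,\xi))$; taking expectations and again using~\eqref{almeida} to pass from $L_\pi(\xi,\ldots,\xi)$ to $L_\pi(\xi^{(1)},\ldots,\xi^{(m)})$ yields the left-hand inequality.

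There is no serious obstacle here: the only points requiring a word of care are (i) that~\eqref{almeida}, stated for $\|\punkt\|^p$, is really a distributional identity $L_\pi(\xi,\ldots,\xi)\sim L_\pi(\xi^{(1)},\ldots,\xi^{(m)})$ obtained by renaming independent coordinates, hence valid with $\Phi$ in place of the $p$-th power; and (ii) the interchange of $\E_\varepsilon$ (a finite average) with $\Phi$ and with $\E_\xi$, which is immediate by Jensen and Fubini for nonnegative integrands. The conceptual content is entirely carried by~\eqref{eqdecom} and~\eqref{lpi}; convexity of $\Phi$ replaces the norm inequalities used in Proposition~\ref{decosim2}, and the symmetry of $\xi$ is what makes $T_\pi(\varepsilon)$ harmless.
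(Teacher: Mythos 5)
Your proposal is correct and follows essentially the same route as the paper's own proof: Jensen's inequality applied to the polarization-type identity~\eqref{lpi} together with the evenness of $\Phi$ and the symmetry of $\xi$ for the right-hand inequality, and Jensen's inequality applied to the average decomposition~\eqref{eqdecom} for the left-hand one, with the observation behind~\eqref{almeida} (a distributional identity, so valid for $\Phi$ in place of $\|\punkt\|^p$) used to pass between $L_\pi(\xi,\ldots,\xi)$ and $L_\pi(\xi^{(1)},\ldots,\xi^{(m)})$. Your explicit handling of the reduction to coupled variables before invoking~\eqref{eqdecom} is exactly the point the paper dispatches in its first sentence, so nothing is missing.
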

\begin{proof}
First of all the same argument as in~\eqref{almeida} shows that it is enough to check the inequalities for $\E\Phi(L_\pi(\xi,\ldots,\xi))$. Now, using the same argument as in~\eqref{lligassa}, this time with Jensen's inequality and the fact that $\Phi(x)=\Phi(- x)$, we get
\[
\E_\xi\Phi(L_\pi(\xi,\ldots,\xi))\leq \E_{\varepsilon,\xi}\Phi(\varepsilon_1\ldots\varepsilon_m P(T_\pi(\varepsilon) \xi))=\E_\xi\Phi(P(\xi)),
\]
for each  $\pi\in \Pi_{k,m}$. This gives one inequality. For the other one, note that, using again Jensen's inequality in~\eqref{eqdecom} yields
\[
\Phi\Big(\frac{k^{m}}{\binom{km}{m}}P(\xi)\Big)
\leq  \frac{1}{|\Pi_{k,m}|} \sum_{\pi\in \Pi_{k,m}} \Phi(L_\pi(\xi^{(1)},\ldots,\xi^{(m)})) \,.
\]
The linearity of $\mathbb{E}$ completes the proof.
\end{proof}

We finish this section by noting that Proposition~\ref{decosim2} cannot be deduced from Proposition~\ref{decosim3}, since the exponents $1/p$ would remain outside the average over $\pi\in \Pi_{k,m}$. In other words, the inequalities for $p$-norms are better than those for general convex functions.

\subsection{Comparison of random polynomials}

Having our new decomposition at hand, we are almost in position to prove Theorem~\ref{compara}. Let us state two results we use. The following lemma was proven in \cite[Lemma~4.2]{CaMaSe} and shows that Walsh and Steinhaus tetrahedral polynomials have equivalent $p$-norms (i.e. we may replace Steinhaus by Rademacher random variables) without assuming homogeneity or geometrical conditions on the Banach space.  Besides the argument given there, this can also be proven using \cite[Proposition~6.3.1]{KwWo92} and checking the hypothesis by hand.

\begin{lemma} \label{lemma1}
 Let $X$ be a Banach space and $1\leq p <\infty$. For every tetrahedral polynomial $P : \mathbb{C}^{n} \to X$ of degree $m$  we have
 \begin{equation}\label{sw}
  (1+\sqrt{2})^{-m}\left(\mathbb{E}\|P(\varepsilon)\|^p\right)^{1/p}
  \le \left(\mathbb{E}\|P(w)\|^p\right)^{1/p}
  \le (1+\sqrt{2})^{m}	\left(\mathbb{E}\|P(\varepsilon)\|^p\right)^{1/p}.
\end{equation}
\end{lemma}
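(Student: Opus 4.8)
The two inequalities in \eqref{sw} are essentially dual to one another, and the plan is to reduce both of them to a single auxiliary \emph{modulation estimate}. Since a Steinhaus variable factors as $w_k \stackrel{d}{=} \varepsilon_k u_k$ with $(\varepsilon_k)$ a Rademacher family, $(u_k)$ a Steinhaus family, and the two families mutually independent, and since a tetrahedral polynomial is multilinear in its variables, conditioning on $u$ and using Fubini gives
\[
\mathbb{E}\|P(w)\|^p = \mathbb{E}_u\,\mathbb{E}_\varepsilon\Big\|\sum_{|A|\le m} u_A\, x_A\, \varepsilon_A\Big\|^p,\qquad u_A:=\prod_{k\in A}u_k\in\mathbb{T}.
\]
Thus \eqref{sw} follows at once if we show that there is an absolute constant $C\ge 1$ (ideally $C=1+\sqrt2$) such that for every $\omega\in\mathbb{T}^n$ and every tetrahedral $P$ of degree $\le m$ one has $\mathbb{E}_\varepsilon\|\sum_{|A|\le m}\omega_A x_A\varepsilon_A\|^p\le C^{mp}\,\mathbb{E}_\varepsilon\|\sum_{|A|\le m}x_A\varepsilon_A\|^p$: applying this to $\omega$ and integrating in $u$ yields the upper estimate in \eqref{sw}, while applying it to $\bar\omega$ (and using $\omega_A\bar\omega_A=1$ to recover the unmodulated polynomial) yields the lower one. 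The case $m=1$ of the modulation estimate is, after symmetrising to absorb the constant term, exactly the complex contraction principle (Theorem~\ref{contr}), giving a factor $\pi/2$; this is also where \eqref{zvse} comes from.

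The core difficulty is that the naive argument — peeling off the variables of $P$ one at a time and applying the one‑variable contraction principle at each step — costs a factor $\pi/2$ \emph{per variable} and produces a constant of the form $C^{n}$, which is useless when $m\ll n$. One must instead exploit that $P$ has degree $m$, and do so \emph{without} invoking decoupling or the boundedness of the Rademacher projections, since Lemma~\ref{lemma1} is meant to hold for arbitrary Banach spaces. The plan is to pass to the symmetric $m$‑linear form $\hat M$ of the homogenisation $\hat P(z,z_0)=\sum_A x_A z_A z_0^{\,m-|A|}$, so that $P(\omega\varepsilon)=\hat M\big((\omega\varepsilon,1),\dots,(\omega\varepsilon,1)\big)$, and to remove the modulation one \emph{slot} at a time — there being only $m$ slots. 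In each slot $\hat M$ is a linear functional, so modulating that slot replaces a degree‑$\le1$ (affine) expression in $\varepsilon$ by its $\omega$‑modulation, which is controlled by the one‑variable contraction principle; tracking the one‑dimensional extremal constant through the scheme is what is designed to produce the sharp value $C=1+\sqrt2$. (For every application in this paper any $C^m$ suffices, so one may also settle for a cruder absolute $C$ obtained the same way.) Alternatively, and equivalently, one invokes the comparison principle \cite[Proposition~6.3.1]{KwWo92} for tetrahedral random polynomials, whose hypotheses must then be checked by hand for the Rademacher/Steinhaus pair, the verification again reducing to a one‑dimensional comparison with constant $1+\sqrt2$.

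The step I expect to be the main obstacle is precisely this slot‑by‑slot removal: all $m$ slots of $\hat M$ are fed the \emph{same} random vector $\varepsilon$, so the other slots cannot simply be frozen while the contraction principle is applied to one of them. Circumventing this requires a preliminary, cost‑free partial decoupling — for instance replacing $\varepsilon$ by an independent copy in the slot currently being modified, justified by the multilinearity of $\hat M$ together with the polarization identity — so that the remaining slots become $\mathcal F$‑measurable and the one‑variable contraction estimate applies conditionally. Getting this partial decoupling to be genuinely lossless (constant $1$, not merely $C^m$) is the delicate point; once it is in place, the reduction of the first paragraph closes the proof of \eqref{sw}.
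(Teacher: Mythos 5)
Your opening reduction is sound: writing $w \stackrel{d}{=} u\varepsilon$ and conditioning on $u$ does reduce \eqref{sw} to the fixed-phase estimate $\mathbb{E}_\varepsilon\big\|\sum_{|A|\le m}\omega_A x_A\varepsilon_A\big\|^p\le C^{mp}\,\mathbb{E}_\varepsilon\big\|\sum_{|A|\le m}x_A\varepsilon_A\big\|^p$ for $\omega\in\mathbb{T}^n$; in fact, by rotation invariance of $w$ this modulation estimate is \emph{equivalent} to the lemma, so all of the content has simply been pushed into it. The gap is exactly where you flag it, and it is fatal as the argument stands: there is no cost-free (nor even absolute-constant) way to replace $\varepsilon$ by an independent copy in a single slot of $\hat{M}(\varepsilon,\dots,\varepsilon)$. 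One-variable decoupling carries an intrinsic factor of order $\sqrt m$, not $O(1)$: already for $P(z)=z_1\cdots z_m$ one has $\mathbb{E}\|P(\varepsilon)\|=1$ while $\mathbb{E}\|M(\varepsilon',\varepsilon,\dots,\varepsilon)\|\approx m^{-1/2}$, which is also what Proposition~\ref{1decogauss} and Theorem~\ref{1decoz} say (and the reverse Steinhaus inequality there needs $K$-convexity, which you cannot assume for an arbitrary $X$). Iterating slot by slot therefore accumulates a loss of order $m^{m/2}$ (or $m^m$ if one decouples fully via \eqref{decosim}); the only mechanism that makes such factors cancel, namely the two-sided $m^{m/2}$-normalized decoupling of Theorem~\ref{decoup}, requires finite cotype and is itself deduced from Lemma~\ref{lemma1}, so invoking anything of that kind here is both unavailable in general and circular. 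Polarization does not rescue the scheme either: it costs $C^m$ globally, not per slot, and after demodulating one slot the object $M(\varepsilon,\omega\varepsilon,\dots,\omega\varepsilon)$ is no longer of the form $M(\xi,\dots,\xi)$, so even the known one-variable decoupling statements do not apply to the next step.

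For comparison with the source: the paper does not reprove this lemma at all; it quotes \cite[Lemma~4.2]{CaMaSe} and remarks that one can alternatively use \cite[Proposition~6.3.1]{KwWo92} ``checking the hypotheses by hand.'' Your second, ``equivalent'' route is precisely that pointer, but the hand-checking (a coordinatewise comparison of the Rademacher/Steinhaus pair that exploits the multi-affine structure so the cost is counted per monomial degree, not per coordinate) is exactly the work you have not done, and it is independent of, not reducible to, your slot-by-slot scheme. So as written, neither of your two routes closes the proof; to salvage the write-up you should either carry out the verification of the hypotheses of \cite[Proposition~6.3.1]{KwWo92}, or replace the slot-by-slot step by an argument that acts on one coordinate $z_k$ at a time in a way whose total cost is $C^{|A|}$ per monomial (using that setting a coordinate to zero in a tetrahedral polynomial is a conditional expectation and hence free by Jensen), rather than trying to freeze slots of the multilinear form.
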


The second result relates the norms of a polynomial and its homogeneous projection.
Given a polynomial $P(z) = \sum_{\vert \alpha \vert \leq m} x_{\alpha} z^{\alpha}$, for each $1 \leq k \leq m$ we consider the corresponding \emph{$k$-homogeneous projection} given by $$P_{k}(z) = \sum_{\vert \alpha \vert =k} x_{\alpha} z^{\alpha}.$$
The following proposition~can be found in \cite[Lemma~2]{Kw87} (see also \cite[Lemma~3.2.4]{DPGi99}). We also refer to \cite{CaMaSe} where the exponential growth of the constant on the degree of the polynomial (i.e., to be of the form $C^m$ for some $C\ge 1$) is explicitly derived.

\begin{proposition} \label{lemma3}
Let $X$ be a Banach space. There exists $C \geq 1$ so that for every $1\leq p<\infty$, every non-trivial symmetric random variable $\xi_{0}$ and every  tetrahedral polynomial $P : \mathbb{C}^{n} \to X$ of degree $m$ we have
	\[
	(\mathbb{E}\|P_k(\xi)\|^p)^{1/p}
	\leq C^m (\mathbb{E}\|P(\xi)\|^p)^{1/p} \,,
	\]
where $\xi $  is a random vector of iid copies of $\xi_0$.
\end{proposition}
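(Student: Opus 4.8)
The plan is to reduce the inequality, by a Rademacher randomization of the vector $\xi$, to the same statement for Walsh polynomials, and then to settle the Walsh case by switching to Steinhaus variables, where the $k$-homogeneous projection can be read off by integrating against a single extra phase.

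\emph{Step 1: randomization.} Since $\xi_0$ is symmetric and the coordinates of $\xi$ are iid, for an independent Rademacher vector $\varepsilon=(\varepsilon_1,\dots,\varepsilon_n)$ the pointwise product satisfies $\varepsilon\xi\sim\xi$; hence $\E\|P(\xi)\|^p=\E_\xi\E_\varepsilon\|P(\varepsilon\xi)\|^p$ and likewise $\E\|P_k(\xi)\|^p=\E_\xi\E_\varepsilon\|P_k(\varepsilon\xi)\|^p$. Freezing a realization $c$ of $\xi$, the map $z\mapsto P(cz)$ (pointwise product) is again a tetrahedral polynomial of degree $\le m$, with coefficients $x_A\prod_{i\in A}c_i\in X$, and its $k$-homogeneous part is exactly $z\mapsto P_k(cz)$. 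So it suffices to produce a constant $C\ge 1$, independent of $n$, $p$ and $\xi_0$, such that $\E\|Q_k(\varepsilon)\|^p\le C^{mp}\,\E\|Q(\varepsilon)\|^p$ for every tetrahedral $Q:\C^n\to X$ of degree $\le m$; integrating that estimate over $c\sim\xi$ then yields the proposition.

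\emph{Step 2: the Walsh case.} First use Lemma~\ref{lemma1} twice to trade Rademacher for Steinhaus variables, getting $\E\|Q_k(\varepsilon)\|^p\le(1+\sqrt 2)^{kp}\E\|Q_k(w)\|^p$ and $\E\|Q(w)\|^p\le(1+\sqrt 2)^{mp}\E\|Q(\varepsilon)\|^p$. Now let $w_0$ be a Steinhaus variable independent of $w$; since $Q$ is tetrahedral, $Q(w_0w)=\sum_A x_A w_0^{|A|}w_A$, so $\E_{w_0}[\overline{w_0}^{\,k}Q(w_0w)]=Q_k(w)$. Because $(w_0w_1,\dots,w_0w_n)\sim(w_1,\dots,w_n)$ by rotation invariance of the Haar measure on $\T^n$, Jensen's inequality gives $\E\|Q_k(w)\|^p\le\E_w\E_{w_0}\|Q(w_0w)\|^p=\E\|Q(w)\|^p$. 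Chaining these bounds and using $k\le m$, $\E\|Q_k(\varepsilon)\|^p\le(1+\sqrt 2)^{(k+m)p}\E\|Q(\varepsilon)\|^p\le\big((1+\sqrt 2)^2\big)^{mp}\E\|Q(\varepsilon)\|^p$, so $C=(1+\sqrt 2)^2$ works.

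\emph{Main obstacle.} The only genuinely delicate point is manufacturing the phase needed to isolate the level-$k$ part: the symmetry of $\xi_0$ supplies only sign changes, which separate even from odd degrees but not the individual homogeneity levels, so one cannot project directly on the $\xi$-side. The randomization in Step~1 is precisely what converts the $\xi$-randomness into Rademacher randomness that can then be compared to Steinhaus randomness; once one works with Steinhaus variables the projection costs nothing, thanks to $w_0w\sim w$, and all of the exponential loss is bookkeeping coming from the Rademacher–Steinhaus comparison in Lemma~\ref{lemma1}. (One could alternatively invoke the cited proofs of \cite[Lemma~2]{Kw87} or \cite{CaMaSe}, but the argument above already gives the required $C^m$ dependence directly.)
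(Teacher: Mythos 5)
Your proof is correct, but it takes a genuinely different route from the one the paper relies on. The paper does not prove Proposition~\ref{lemma3}; it quotes \cite[Lemma~2]{Kw87} (see also \cite[Lemma~3.2.4]{DPGi99}), whose classical argument extracts $P_k$ from the dilations $t\mapsto P(t\xi)$ by polynomial (Chebyshev-type) interpolation in $t$ and then bounds $\E\|P(t\xi)\|^p$ for $|t|\le 1$ via a contraction principle for tetrahedral polynomials in independent symmetric variables, the $C^m$ coming from the interpolation coefficients (this is the computation tracked explicitly in \cite{CaMaSe}). You instead randomize signs (using $\varepsilon\xi\sim\xi$, legitimate since $\xi_0$ is symmetric, and the reduction does integrate correctly over the frozen coefficients $c$, with Tonelli taking care of possibly infinite moments), pass to Steinhaus variables by Lemma~\ref{lemma1}, and observe that on the Steinhaus side the $k$-homogeneous projection is a norm-one operation because $Q_k(w)=\E_{w_0}\big[\overline{w_0}^{\,k}Q(w_0w)\big]$ and $w_0w\sim w$; together with $k\le m$ this gives the constant $(1+\sqrt2)^{2m}$, which is of the required form. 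What each approach buys: your argument is shorter and even shows the clean bound $(\E\|P_k(w)\|^p)^{1/p}\le(\E\|P(w)\|^p)^{1/p}$ with constant $1$ for Steinhaus variables (tetrahedrality is not even needed for that step), but it channels all of the degree dependence --- and in effect all of the real work --- through Lemma~\ref{lemma1}, so the proposition becomes logically dependent on that external result (not circular within this paper, since Lemma~\ref{lemma1} is quoted independently and can alternatively be obtained from \cite[Proposition~6.3.1]{KwWo92}); the classical interpolation route works directly with the given symmetric variables, uses no complex rotation invariance, and yields the estimate for general symmetric convex functions rather than only for $p$-th moments.
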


We finally have come to the point where we can  prove  Theorem~\ref{compara}. We see that Steinhaus (or Walsh) polynomials always have the smallest $p$-norms (up to a constant $C^m$) compared to polynomials on other symmetric random variables and that, under certain assumptions, they are even equivalent (again up to $C^{m}$ constant).

\begin{proof}[Proof of Theorem~\ref{compara}]
The proof of~\ref{landa1} is essentially an adaptation of \cite[Proposition~12.11]{DiJaTo95}. Note that $\xi = (\xi_{1}, \ldots , \xi_{n})$ consists of independent copies of $\xi_{0}$. Then, given $A \subseteq [n]$ with $\vert A \vert =m$ and $z\in \C^n$, we have
\[
(\E_\xi|\xi_{0}|)^m z_A=\prod_{i\in A}\E_\xi|\xi_{i}|z_{i}=\E_\xi \prod_{i\in A}|\xi_{i}|z_{i}=\E_\xi(|\xi|z)_A \,.
\]
As a consequence, if  $P(z)=\sum_{|A|=m}x_A z_A$ is $m$-homogeneous, we have
\begin{align*}
	(\E_\xi|\xi_{0}|)^{pm}\mathbb{E}_w\|P(w)\|^p
	&= \mathbb{E}_w\Big\|\sum_{|A|=m} x_A (\E_\xi|\xi_{0}|)^mw_A\Big\|^p	
	\\&= \mathbb{E}_w\Big\|\sum_{|A|=m} x_A \E_\xi(|\xi|w)_A \Big\|^p
	= \mathbb{E}_w\| \E_\xi P(|\xi|w)\|^p
	\\ &\le \E_\xi \E_w\|  P(|\xi|w)\|^p = \E_\xi\E_w\|  P(\xi w)\|^p,
\end{align*}
for $1 \leq p < \infty$ (where in the last step we used the rotation invariance of $w$).
We can now use Lemma~\ref{lemma1}
to replace Steinhaus with Rademacher variables, obtaining
\[
(\E_\xi|\xi_1|)^{pm}\mathbb{E}_w\|P(w)\|^p
\leq (1+\sqrt{2})^{pm} \E_\xi\E_\varepsilon\|  P(\xi \varepsilon)\|^p \,.
\]
Since $\xi_{0}$ is symmetric we have that $\xi\varepsilon\sim \xi$,
which yields
\begin{equation} \label{lvb}
\Big(\frac{\E |\xi_{0}|}{1+\sqrt{2}} \Big)^{m} \big(\mathbb{E}\|P(w)\|^p \big)^{1/p}
\leq  \big(\E\|P(\xi)\|^p \big)^{1/p} \,,
\end{equation}
and~\eqref{walshchica} holds for homogeneous tetrahedral polynomials (note that, being $\xi_{0}$ non-trivial, $\mathbb{E}\vert \xi_{0} \vert  \neq 0$).\\
Given an arbitrary tetrahedral polynomial $P$, we split it on its homogeneous components and apply Proposition~\ref{lemma3} to get
\begin{equation} \label{bernal}
\begin{split}
(\E\|P(w)\|^p)^{1/p} & \le \sum_{k=0}^m (\E\|P_k(w)\|^p)^{1/p}\le \sum_{k=0}^m C^k (\E\|P_k(\xi)\|^p)^{1/p}
\\ & \le \widetilde{C}^m \sum_{k=0}^m C^k (\E\|P(\xi)\|^p)^{1/p}
\le (2C\widetilde{C})^m (\E\|P(\xi)\|^p)^{1/p} .
\end{split}
\end{equation}

To show~\ref{landa2} we again start with the homogeneous case. Let us observe first that by Lemma~\ref{lemma1}
it is enough to see that
\begin{equation} \label{quintana}
\big(\mathbb{E}\|P(\xi)\|^p \big)^{1/p}\leq C^m \big(\E\|P(\varepsilon)\|^p \big)^{1/p} \,.
\end{equation}
A simple computation from Proposition~\ref{decosim2} shows that if
\begin{equation} \label{parra}
\big(\mathbb{E}\|L(\xi^{(1)},\ldots,\xi^{(m)})\|^p\big)^{1/p}
\leq C^m \big( \E \|L(\varepsilon^{(1)},\ldots,\varepsilon^{(m)} ) \|^p \big)^{1/p}
\end{equation}
holds for every $m$-linear $L : \mathbb{C}^{n} \times \cdots \times \mathbb{C}^{n} \to X$, then~\eqref{quintana} holds true. It suffices, therefore, to show that~\eqref{parra} holds, and we do this by induction. For the case $m=1$, from
\cite[Proposition~3.2]{pisier} (see also \cite[Proposition~9.14]{contrac}
we know that, if $\xi_{0}$ is real (besides having finite $s$-norm),  we have
\[
\Big(\mathbb{E}\Big\|\sum_{j=1}^n x_j \xi_j \Big\|^p\Big)^{1/p}
\leq C \Big(\E\Big\|\sum_{j=1}^n x_j \varepsilon_j \Big\|^p \Big)^{1/p}
\]
for every choice of vectors $\{x_j\}_{j=1}^n\subseteq X$. This immediately generalizes to complex random vectors by splitting $\xi$ in its real and imaginary parts and using the triangle inequality, so the case $m=1$ holds. The rest follows easily by induction on $m$, showing that~\eqref{propcomparacion} holds for homogeneous tetrahedral polynomials. The inequality for arbitrary tetrahedral polynomials follows as in~\eqref{bernal}.
\end{proof}

We end this section with one further result for Rademacher and Steinhaus random variables.
Remark~\ref{remate} shows that there is no hope to obtain a decoupling inequality as~\eqref{decoupsim} for polynomials taking values in some space with trivial cotype, not even for random variables as nice as Rademacher (or Steinhaus). More precisely,
there is no hope to obtain an inequality as~\eqref{bluemoon} on such spaces. However, using the idea in~\eqref{ecdecosuma}, obtaining a decoupling inequality between $P$ and $M$ is essentially the same as comparing
the moments of the polynomial evaluated in $\xi$ and $\sum_{l=1}^m \xi^{(l)}$. For Rademacher and Steinhaus variables this allows us to show that the left-hand side of~\eqref{decoupsim} (i.e. the reverse inequality to that in
\eqref{bluemoon}) holds even for spaces with trivial cotype.

\begin{proposition} \label{bennett}
  	Let $X$ be a Banach space. There is a constant $C\ge1$ such that
  	\[
  	(\E\|P(\varepsilon)\|^p)^{1/p}
  	  	\le  C^m m^{m/2} (\E\|M(\varepsilon^{(1)},\ldots,\varepsilon^{(m)})\|^p)^{1/p}
  	\]
  	and
  	\[
  	(\E\|P(w)\|^p)^{1/p}
  	  	\le  C^m m^{m/2} (\E\|M(w^{(1)},\ldots,w^{(m)})\|^p)^{1/p}
  	\]
  	for every $1\leq p <\infty$ and every $m$-homogeneous tetrahedral polynomial $P:\C^n\rightarrow X$.
\end{proposition}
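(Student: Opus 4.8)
The plan is to reduce both inequalities to \eqref{ecdecosuma} by evaluating $P$ at a suitably normalized random walk and then invoking the $m$-homogeneity of $P$. Fix the target random vector: write $v=\varepsilon$ for the first inequality and $v=w$ for the second, and let $v^{(1)},\ldots,v^{(m)}$ be iid copies of $v$. First I would set
\[
\xi:=\frac{1}{\sqrt m}\sum_{l=1}^m v^{(l)},
\]
which is a random vector whose coordinates are iid copies of the non-trivial symmetric random variable $\xi_0:=\frac{1}{\sqrt m}\sum_{l=1}^m v^{(l)}_1$ (a normalized sum of $m$ Rademacher, resp.\ Steinhaus, variables). Since $P$ is $m$-homogeneous we have $P(\xi)=m^{-m/2}P\big(\sum_{l=1}^m v^{(l)}\big)$, so \eqref{ecdecosuma}, applied to the symmetric random vectors $v^{(1)},\ldots,v^{(m)}$, gives
\[
(\E\|P(\xi)\|^p)^{1/p}=m^{-m/2}\Big(\E\Big\|P\Big(\sum_{l=1}^m v^{(l)}\Big)\Big\|^p\Big)^{1/p}\le m^{m/2}\big(\E\|M(v^{(1)},\ldots,v^{(m)})\|^p\big)^{1/p}.
\]
It therefore remains to bound $(\E\|P(v)\|^p)^{1/p}$ by $C^m(\E\|P(\xi)\|^p)^{1/p}$.

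For this step I would invoke the estimate \eqref{lvb} obtained inside the proof of Theorem~\ref{compara}\ref{landa1}, which for the random vector $\xi$ above reads
\[
\big(\E\|P(w)\|^p\big)^{1/p}\le\Big(\frac{1+\sqrt2}{\E|\xi_0|}\Big)^m\big(\E\|P(\xi)\|^p\big)^{1/p}.
\]
In the Steinhaus case $v=w$ this is exactly what is needed. In the Rademacher case $v=\varepsilon$ one first uses Lemma~\ref{lemma1} to replace $w$ by $\varepsilon$ on the left-hand side, at the cost of a further factor $(1+\sqrt2)^m$. The only point that requires attention is that $\xi_0$ depends on $m$, so one must check that $\E|\xi_0|$ is bounded below by a constant independent of $m$; granted that, the constant in the displayed inequality is again of the form $C^m$.

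To control $\E|\xi_0|$ from below I would argue as follows. A direct computation of second and fourth moments of a normalized walk gives $\E|\xi_0|^2=1$ (the cross terms vanish by independence and symmetry) and $\E|\xi_0|^4\le 3$, uniformly in $m$, in both the Rademacher and the Steinhaus case. Hölder's inequality with exponents $3/2$ and $3$ then yields $1=\E|\xi_0|^2\le(\E|\xi_0|)^{2/3}(\E|\xi_0|^4)^{1/3}$, hence $\E|\xi_0|\ge(\E|\xi_0|^4)^{-1/2}\ge 3^{-1/2}$. (Alternatively, by the central limit theorem $\E|\xi_0|$ converges to the corresponding Gaussian absolute moment as $m\to\infty$ and is positive for every finite $m$, so its infimum over $m$ is positive.)

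Putting the three displays together, in the Rademacher case one obtains
\[
(\E\|P(\varepsilon)\|^p)^{1/p}\le\Big(\frac{(1+\sqrt2)^2}{\E|\xi_0|}\Big)^m(\E\|P(\xi)\|^p)^{1/p}\le\big(\sqrt3\,(1+\sqrt2)^2\big)^m\, m^{m/2}\big(\E\|M(\varepsilon^{(1)},\ldots,\varepsilon^{(m)})\|^p\big)^{1/p},
\]
and the Steinhaus case is identical but without the extra $(1+\sqrt2)^m$ and with $M(w^{(1)},\ldots,w^{(m)})$ on the right; taking $C$ to be the larger of the two constants finishes the argument. I expect the only genuine obstacle to be the uniform-in-$m$ lower bound on $\E|\xi_0|$ — it is precisely this that prevents the constant from growing with $m$ — while everything else is bookkeeping of estimates already available (\eqref{ecdecosuma}, \eqref{lvb}, Lemma~\ref{lemma1}, and the $m$-homogeneity of $P$).
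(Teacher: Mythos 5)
Your argument is correct and is essentially the paper's own proof: both rest on applying \eqref{lvb} to the ($m$-step) sum of independent copies, invoking \eqref{ecdecosuma}, and gaining the factor $m^{m/2}$ from a Khinchin-type lower bound on the first absolute moment of that sum (your normalization by $\sqrt m$ plus the second/fourth-moment H\"older estimate is just a repackaging of the paper's direct use of the Khinchin inequality, and Lemma~\ref{lemma1} handles the Rademacher case in both versions). No gaps; the constants differ only cosmetically.
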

\begin{proof}
Let us fix some $1 \leq p < \infty$ and note that, in view of Lemma~\ref{lemma1},
it is enough to check that the inequality holds for Steinhaus random variables.
Let $P$ be an $m$-homogeneous tetrahedral polynomial. Applying~\eqref{lvb} to the random variable $\sum_{l=1}^m w^{(l)}$ and~\eqref{ecdecosuma} we get
\begin{align*}
\left(\frac{\E\left|\sum_{l=1}^m w_1^{(l)}\right|}{1+\sqrt{2}}\right)^m(\E\|P(w)\|^p)^{1/p}&\le  \Big(\E\Big\|P\Big(\sum_{l=1}^m w^{(l)}\Big)\Big\|^p\Big)^{1/p}
\\ &\le m^m (\E\|M(w^{(1)},\ldots,w^{(m)})\|^p)^{1/p}.
\end{align*}
Now by Khinchin inequality (or Theorem~\ref{poiss}) we have
\[\sqrt{m}=\Big(\E\Big|\sum_{l=1}^m w_1^{(l)}\Big|^2\Big)^{1/2}\le \sqrt{2} \E\Big|\sum_{l=1}^m w_1^{(l)}\Big|. \]
Joining both inequalities we conclude
\[(\E\|P(w)\|^p)^{1/p}
  	\le  (\sqrt{2}+2)^m m^{m/2} (\E\|M(w^{(1)},\ldots,w^{(m)})\|^p)^{1/p}. \qedhere \]
\end{proof}

\section{Geometric conditions for full independence}\label{sectotin}

So far we have been able to compare the norm of polynomials with different random variables for tetrahedral polynomials (see Theorem~\ref{compara} or Lemma~\ref{lemma1}) and then to compare a tetrahedral random polynomial with its multilinear counterpart. A question one might ask is if we can do better: can we compare the random polynomial with a fully independent sum (and not just the \textit{less dependent sum} given by the multilinear operator)? A second question raises naturally: what can be said for non-tetrahedral polynomials?. We show that under stronger geometric conditions that include Banach lattices of non-trivial type we can compare random polynomials with independent randoms sums. In addition, for Steinhaus variables our estimates hold for arbitrary (not necessarily tetrahedral) polynomials.

We begin with two results that are essentially a consequence of \cite[Theorem~12.2]{TJ} and were shown in \cite[Theorem~4.1]{ruc} for specific random variables. The proofs for general random variables are analogous. We include them for the sake of completeness and to lay the ground for Theorems \ref{propgap} and \ref{propgapduo}.

\begin{proposition} \label{RUCvectorvaluedA}
	Let $X$ be a Banach space of type $2$. There is a constant $C>0$ such that for every orthonormal sequence of (not necessarily independent) random variables $(\xi_i)_{i\in \N}\subseteq L^2(\mu)$
	and every choice of finitely many $x_1, \ldots, x_n \in X$ we have
	\begin{equation*}
	\Big(\mathbb{E} \Big\|\sum_{i=1}^n \varepsilon_i  x_i \Big\|^2\Big)^{1/2}
	\leq C \Big(\E\Big\|\sum_{i=1}^n \xi_i x_i \Big\|^2\Big)^{1/2}.
	\end{equation*}
\end{proposition}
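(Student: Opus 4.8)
The plan is to deduce Proposition~\ref{RUCvectorvaluedA} from the finite-dimensional factorization theorem \cite[Theorem~12.2]{TJ}, exactly as in the scheme of \cite[Theorem~4.1]{ruc}, and then to translate that factorization into the asserted norm comparison. First I would fix $x_1,\ldots,x_n\in X$, and consider the operator $u\colon \ell_2^n\to X$ defined by $u(e_i)=x_i$. The starting observation is that, since $X$ has type $2$, the Gaussian (equivalently Rademacher) average $\big(\E\|\sum_i \varepsilon_i x_i\|^2\big)^{1/2}$ is comparable, up to the type-$2$ constant, to the $\ell_2$-valued operator norm (the $\pi_2^*$ or $\ell$-norm) of $u$; conversely, for a general orthonormal system $(\xi_i)$ in $L^2(\mu)$, the quantity $\big(\E\|\sum_i \xi_i x_i\|^2\big)^{1/2}$ dominates the norm of a suitable factorization of $u$ through $L^2(\mu)$. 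The key step is to invoke \cite[Theorem~12.2]{TJ}: because $X$ has type $2$, any operator into $X$ factors appropriately, and the $\ell$-norm controlled by the $\xi_i$-average bounds the $\ell$-norm controlled by the Rademacher average, with a constant depending only on the type-$2$ constant of $X$ and not on $n$, on $\mu$, or on the particular orthonormal system.

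Concretely, the main chain of inequalities I would write is: using orthonormality of $(\xi_i)$ in $L^2(\mu)$, the map sending $e_i\in\ell_2^n$ to $\xi_i\in L^2(\mu)$ is an isometric embedding, so $\sum_i\xi_i x_i = (u\circ\iota)$ applied to the ``identity'' tensor, and thus $\big(\E\|\sum_i\xi_i x_i\|^2\big)^{1/2}$ equals the Gaussian-type norm of $u$ computed relative to this embedding. Since the Gaussian average of $\sum_i\gamma_i x_i$ is the same as this quantity when $\xi_i=\gamma_i$ are i.i.d.\ standard Gaussians (which do form an orthonormal system), one obtains on the one hand a lower bound, and on the other hand, the Rademacher average $\big(\E\|\sum_i\varepsilon_i x_i\|^2\big)^{1/2}$ is comparable to the Gaussian one by Kahane's inequality (valid for any Banach space, with an absolute constant). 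Combining: $\big(\E\|\sum_i\varepsilon_i x_i\|^2\big)^{1/2}\lesssim \big(\E\|\sum_i\gamma_i x_i\|^2\big)^{1/2}$, and the right-hand side is $\le C\,\big(\E\|\sum_i\xi_i x_i\|^2\big)^{1/2}$ precisely because $X$ has type $2$ and \cite[Theorem~12.2]{TJ} says the Gaussian average is the largest (up to the type-$2$ constant) among averages over orthonormal systems.

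I expect the main obstacle to be the careful bookkeeping of which direction of \cite[Theorem~12.2]{TJ} is needed and verifying that the constant depends only on the type-$2$ constant of $X$ — in particular, making sure that no dependence on $n$, on the measure space $(\mu)$, or on the specific (possibly dependent) orthonormal family $(\xi_i)$ creeps in. The technical heart is that type $2$ is exactly the hypothesis under which the Gaussian/Rademacher average dominates (up to a universal multiple of the type-$2$ constant) the average $\big(\E\|\sum_i\xi_i x_i\|^2\big)^{1/2}$ over \emph{any} orthonormal system; once this is quoted correctly from \cite{TJ}, the rest is the routine passage between Gaussian and Rademacher averages via Kahane's inequality. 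Since the statement only asks for comparison with a constant $C$ independent of $n$ and of the system, no degree-dependence issues of the form $C^m$ arise here, so the argument is purely linear and the proof is short.
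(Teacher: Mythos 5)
Your overall route---pass from Rademacher to Gaussian averages, encode the vectors in an operator $T:\ell_2^n\to X$, $T(e_i)=x_i$, and invoke \cite[Theorem~12.2]{TJ} for type $2$---is the same as the paper's, and the first step is fine (although the comparison of Rademacher with Gaussian averages is the standard contraction/Jensen argument valid in every Banach space, not Kahane's inequality; the paper obtains it from Lemma~\ref{lemma1} together with~\eqref{walshchica}). The genuine gap is in the key step. What \cite[Theorem~12.2]{TJ} gives under type $2$ is the bound $\big(\E\|\sum_i\gamma_i x_i\|^2\big)^{1/2}\le C\,\pi_2(T^*)$; it says nothing about averages over arbitrary orthonormal systems, and your slogan that the Gaussian average is the \emph{largest} among orthonormal-system averages is in fact backwards: the content of the proposition is that under type $2$ the Gaussian (hence Rademacher) average is dominated by every orthonormal-system average, i.e.\ it is essentially minimal (maximality is the cotype-$2$ phenomenon of Proposition~\ref{RUCvectorvaluedA2}). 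So the inequality you need, $\big(\E\|\sum_i\gamma_i x_i\|^2\big)^{1/2}\le C\big(\E\|\sum_i\xi_i x_i\|^2\big)^{1/2}$, is not a quotation of \cite[Theorem~12.2]{TJ}; the missing bridge is the estimate $\pi_2(T^*)\le\big(\E\|\sum_i\xi_i x_i\|^2\big)^{1/2}$.

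That bridge has to be proved, and your tensor/embedding remarks do not supply it: for a possibly dependent orthonormal family, $\big(\E\|\sum_i\xi_i x_i\|^2\big)^{1/2}$ is not a ``Gaussian-type norm of $u$ relative to the embedding'' in any sense that feeds into the factorization theorem. The paper's argument at this point is short but essential: orthonormality gives $\|T^*x^*\|_{\ell_2^n}=\big\|\sum_i\xi_i\, x^*(x_i)\big\|_{L^2(\mu)}$ for every $x^*\in X^*$, and then, testing the definition of the $2$-summing norm on a finite family $(x_k^*)$ and using $\sum_k|x_k^*(F(\omega))|^2\le\|F(\omega)\|^2\sup_{x^{**}\in B_{X^{**}}}\sum_k|x^{**}(x_k^*)|^2$ with $F=\sum_i\xi_i x_i$, one obtains $\pi_2(T^*)\le\big(\E\|F\|^2\big)^{1/2}$. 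Once this computation is inserted, your outline becomes precisely the paper's proof, with a constant depending only on the type-$2$ constant of $X$.
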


To prove this we need the concept of $p$-summing operator. For $1\le p\le \infty$ an operator $T:X\rightarrow Y$ is said to be \emph{$p$-summing} if there is a constant $C\geq 1$ such that for every choice of vectors $x_1,\ldots x_n\in X$ we have
\[
\Big(\sum_{i=1}^n \|T(x_i)\|^p_Y\Big)^{1/p}
\le C \sup_{x^*\in B_{X^*}}\Big(\sum_{i=1}^n |x^*(x_i)|^p\Big)^{1/p} \,.
\]
We denote by $\pi_p(T)$ the smallest possible $C$.
We refer the reader to \cite[Chapter~2]{DiJaTo95} and \cite[Chapter~2]{TJ} for a detailed exposition.

\begin{proof}[Proof of Proposition~\ref{RUCvectorvaluedA}]
Let $T: \ell_2^n \to X$ be the operator defined by $T(e_i)=x_i$. Notice that combining Lemma~\ref{lemma1}
 and~\eqref{walshchica} we get
\begin{equation*}
\Big(\mathbb{E} \Big\|\sum_{i=1}^n \varepsilon_i  x_i \Big\|^2\Big)^{1/2}
	\le C
	\Big(\mathbb{E} \Big\|\sum_{i=1}^n \gamma_i   x_i \Big\|^2 \Big)^{1/2}.
\end{equation*}	
On the other hand, we know from \cite[Theorem~12.2]{TJ} that if $X$ has type $2$, then
\begin{equation}\label{ruc2}	
\Big(\mathbb{E} \Big\|\sum_{i=1}^n \gamma_i  x_i \Big\|^2 \Big)^{1/2} \le \widetilde{C} \pi_2(T^\ast).
\end{equation}	
So, to complete the proof it is only left to find a convenient bound for $\pi_2(T^\ast)$. To find it let us observe that the orthogonality of $(\xi_{i})_{i}$ yields
\[
\Big\|\sum_{i=1}^n \xi_i x^*(x_i)\Big\|_{L^2(\mu)}=\Big(\sum_{i=1}^n |x^*(x_i)|^2\Big)^{1/2}
	= \|(x^*(x_i))_{i=1}^n\|_{\ell_2^n}
	=  \|T^*(x^*)\|_{\ell_2^n}
\]
for every $x^*\in X^*$. Therefore, given  a finite collection of vectors $x_k^*\in X^*$ we have
\begin{align*}
	\sum_k \|T^*(x_k^*)\|_{\ell_2^m}^2
	&=\sum_k \Big\|\sum_{i=1}^n \xi_i x_k^*(x_i)\Big\|_{L^2(\mu)}^2
	=\E\Big[\sum_k \Big|x_k^*\Big(\sum_{i=1}^n \xi_i x_i\Big)\Big|^2\Big]
	\\ & \le  \E\Big\|\sum_{i=1}^n \xi_i x_i \Big\|^2 \sup_{x^{**}\in B_{X^{**}}}\sum_k |x^{**}(x_k^*)|^2.
\end{align*}
This gives
\[
\pi_2(T^\ast)\leq  \Big(\E\Big\|\sum_{i=1}^n \xi_i x_i \Big\|^2\Big)^{1/2}  \,,
\]
which completes the proof.
\end{proof}

With a similar argument we have the following dual statement for spaces with cotype $2$.

\begin{proposition} \label{RUCvectorvaluedA2}
	Let $X$ be a Banach space of cotype $2$. There is a constant $C>0$ such that for every orthonormal sequence of (not necessarily independent) random variables $(\xi_i)_{i\in \N}\subseteq L^2(\mu)$
	and every choice of finitely many $x_1, \ldots, x_n \in X$ we have
	\begin{equation*}
	\Big(\E\Big\|\sum_{i=1}^n \xi_i x_i \Big\|^2\Big)^{1/2}
	\leq C \mathbb{E} \Big(\Big\|\sum_{i=1}^n \varepsilon_i  x_i \Big\|^2\Big)^{1/2}.
	\end{equation*}
\end{proposition}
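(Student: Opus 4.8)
The plan is to mimic the proof of Proposition~\ref{RUCvectorvaluedA}, dualizing the type-2 argument into a cotype-2 argument. First I would introduce the operator $T:\ell_2^n\to X$ given by $T(e_i)=x_i$, exactly as before. The goal is to bound $\big(\E\|\sum_i \xi_i x_i\|^2\big)^{1/2}$ from above, and the orthonormality of $(\xi_i)$ again gives the key identity
\[
\Big\|\sum_{i=1}^n \xi_i x^*(x_i)\Big\|_{L^2(\mu)} = \Big(\sum_{i=1}^n |x^*(x_i)|^2\Big)^{1/2} = \|T^*(x^*)\|_{\ell_2^n}
\]
for every $x^*\in X^*$. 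So the left-hand side is controlled by a ``$2$-summing-type'' quantity for $T^*$, but now I need an upper bound of the form $\big(\E\|\sum_i \xi_i x_i\|^2\big)^{1/2}\le C\,\nu(T^*)$ where $\nu$ is whatever norm the cotype-2 analogue of \cite[Theorem~12.2]{TJ} produces (this is essentially the statement that for cotype-2 spaces the gaussian average $\big(\E\|\sum_i \gamma_i x_i\|^2\big)^{1/2}$ dominates, rather than is dominated by, $\pi_2(T^*)$ — or more precisely relates to the $\ell$-norm / $2$-summing norm of $T$ itself). Concretely, I expect to use: for $Y=X^*$ of type 2 (equivalently $X$ of cotype 2, with the caveat that cotype 2 of $X$ gives type 2 of $X^*$ only under $K$-convexity — see the obstacle below), Proposition~\ref{RUCvectorvaluedA} applied to $X^*$, or directly the duality between the ideal of $2$-summing operators and the gaussian/$\ell$-norms.

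The cleanest route I would take: by the definition of $p$-summing norm via the identity above, for any finite family $\{x_k^*\}\subseteq X^*$,
\[
\E\Big\|\sum_{i=1}^n \xi_i x_i\Big\|^2 \,\sup_{x^{**}\in B_{X^{**}}}\sum_k |x^{**}(x_k^*)|^2
\ \ge\ \sum_k \|T^*(x_k^*)\|_{\ell_2^n}^2,
\]
which only says $\pi_2(T^*)\le \big(\E\|\sum \xi_i x_i\|^2\big)^{1/2}$, i.e. the \emph{wrong} direction for an upper bound. So instead I would run the estimate the other way: bound $\big(\E\|\sum_i \xi_i x_i\|^2\big)^{1/2}$ by testing against functionals and using that in a cotype-2 space the gaussian (hence Rademacher) averages control the relevant quantities. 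Precisely, I would show $\big(\E\|\sum_i \xi_i x_i\|^2\big)^{1/2}\le \gamma_2(T) $ or the like, where $\gamma_2$ is the factorization-through-Hilbert-space norm, and then invoke the cotype-2 version of \cite[Theorem~12.2]{TJ} which says $\gamma_2(T)\lesssim \big(\E\|\sum_i \gamma_i x_i\|^2\big)^{1/2}$ for $X$ of cotype 2; finally replace $\gamma_i$ by $\varepsilon_i$ using Lemma~\ref{lemma1} together with~\eqref{walshchica} exactly as in the previous proof (here one needs that passing from Rademacher to gaussian averages is controlled, which for cotype-$2$ — indeed any finite cotype — space is standard).

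The main obstacle is getting the duality bookkeeping right: the statement ``$X$ has cotype 2'' is dual to ``$X^*$ has type 2'' only when $X$ (or $X^*$) is $K$-convex, so I cannot blindly apply Proposition~\ref{RUCvectorvaluedA} to $X^*$. The honest fix is to quote the genuinely ``primal'' cotype-2 statement from \cite[Theorem~12.2]{TJ} (or the Maurey–Pisier circle of results) that directly asserts $\big(\E\|\sum_i \gamma_i x_i\|^2\big)^{1/2}$ dominates the $2$-summing norm of $T:\ell_2^n\to X$ up to a constant depending only on the cotype-2 constant — the dual of inequality~\eqref{ruc2} — and to plug the orthonormality identity into the correct side of that inequality. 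Once the right inequality from \cite{TJ} is cited, the argument is the verbatim dual of the proof of Proposition~\ref{RUCvectorvaluedA}: write the orthonormality identity, interpret it as a $2$-summing (or $\ell$-norm) bound for $T^*$ (resp. $T$), apply the cotype-2 result, and finish by swapping gaussians for Rademachers via Lemma~\ref{lemma1} and~\eqref{walshchica}.
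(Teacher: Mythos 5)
You have correctly isolated the cotype-$2$ ingredient --- the ``primal'' half of \cite[Theorem~12.2]{TJ}, namely $\pi_2(T)\le C\big(\E\|\sum_i\gamma_i x_i\|^2\big)^{1/2}$ for $T:\ell_2^n\to X$, $T(e_i)=x_i$ --- and you are right that the gaussian average is dominated by the Rademacher average because cotype $2$ implies finite cotype. But there is a genuine gap at the step that connects the random vector $\sum_i\xi_i x_i$ to a summing norm. Your only concrete suggestion, $\big(\E\|\sum_i\xi_i x_i\|^2\big)^{1/2}\le\gamma_2(T)$, is false: for an operator whose domain is already $\ell_2^n$ one has $\gamma_2(T)=\|T\|$, and taking $X=\ell_2^n$, $x_i=e_i$ gives left-hand side $\sqrt n$ against $\|T\|=1$. ``Testing against functionals'' hits the same wall: it only controls the weak moments $\sup_{x^*\in B_{X^*}}\big(\E|x^*(\sum_i\xi_i x_i)|^2\big)^{1/2}=\|T\|$, and weak moments never dominate the Bochner norm without an additional theorem. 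The instruction to ``plug the orthonormality identity into the correct side'' does not identify that theorem; as you yourself observed, the definition of the $2$-summing norm only yields $\pi_2(T^*)\le\big(\E\|\sum_i\xi_i x_i\|^2\big)^{1/2}$, the reverse of what is needed, so the argument is not the verbatim dual of Proposition~\ref{RUCvectorvaluedA}, where the definition of $\pi_2$ alone sufficed.

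The missing ingredient is Kwapie\'n's theorem \cite[Theorem~2]{Kw70} (see also \cite[Proposition~1.1]{Pi78} and \cite[Corollary~5.21]{DiJaTo95}): introducing the coefficient operator $S:X^*\to L^2(\mu)$, $S(x^*)=\sum_i\xi_i x^*(x_i)$, it gives the weak-to-strong upgrade $\big(\E\|\sum_i\xi_i x_i\|^2\big)^{1/2}\le\pi_2(S^*)$. One then factors $S^*=i\,T\,q$, where $q:L^2(\mu)\to\ell_2^n$, $q(f)=(\langle f,\xi_i\rangle)_{i=1}^n$, has norm one (this is where orthonormality actually enters), so the ideal property of $2$-summing operators yields $\pi_2(S^*)\le\pi_2(T)$. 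The chain $\big(\E\|\sum_i\xi_i x_i\|^2\big)^{1/2}\le\pi_2(S^*)\le\pi_2(T)\le C\big(\E\|\sum_i\gamma_i x_i\|^2\big)^{1/2}\le\widetilde{C}\big(\E\|\sum_i\varepsilon_i x_i\|^2\big)^{1/2}$, the last step by \cite[Theorem~12.27]{DiJaTo95} (available here since cotype $2$ is finite cotype, which also answers your $K$-convexity worry: no duality with type $2$ of $X^*$ is ever used), then closes the proof. Your outline is repairable, but only after adding Kwapie\'n's theorem and the factorization through $q$; as written it does not prove the statement.
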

\begin{proof}
As before let $T: \ell_2^n \to X$ be the operator defined by $T(e_i)=x_i$. 	Since $X$ has cotype $2$, using again \cite[Theorem~12.2]{TJ} as well as \cite[Theorem~12.27]{DiJaTo95} (which is the linear case of Theorem~\ref{compara} above), we deduce
\[
	\pi_2(T) \le C \Big(\mathbb{E} \Big\|\sum_{i=1}^n \gamma_i  x_i \Big\|^2 \Big)^{1/2}
	\le \widetilde{C}\Big(\mathbb{E} \Big\|\sum_{i=1}^n \varepsilon_i  x_i \Big\|^2\Big)^{1/2}.
\]
To finish the proof we just have to find a convenient lower bound for $\pi_{2}(T)$. To this aim, we consider the operator $S: X^* \to L^2(\mu)$ given by
\[
S(x^*)=\sum_{i=1}^n \xi_i x^*(x_i).
\]
We have the following commutative diagram
\[
\begin{tikzcd}
L^2(\mu) \arrow[r,"S^*"] \arrow[d,swap,"q"] &  X^{**}  \\
\ell_2^n \arrow[r,"T"] & X \arrow[hookrightarrow,swap]{u}{i}
\end{tikzcd}
\]
where $q: L^2(\mu) \rightarrow \ell_2^n$ is the projection given by $q(f)=(\langle f,\xi_i \rangle)_{i=1}^n$ for every $f\in L^2(\mu)$ and $i: X\hookrightarrow X^{**}$ is the natural inclusion. Since clearly $\|i\|=\|q\|=1$, the ideal property of $2$-summing operators \cite[2.4]{DiJaTo95} gives $\pi_2(S^*) \leq \pi_2(T)$. Finally,  \cite[Theorem~2]{Kw70} (see also \cite[Proposition~1.1]{Pi78} and \cite[Corollary~5.21]{DiJaTo95}) give
\[
\Big(\E\Big\|\sum_{i=1}^n \xi_i x_i \Big\|^2\Big)^{1/2}\le \pi_2(S^*).
\]
This completes the proof.
\end{proof}

The Steinhaus monomials $(w^{\alpha})_{\alpha \in \mathbb{N}_{0}^{n}} \subseteq L^{2} (\mathbb{T}^{n})$ form an orthonormal system, and we may apply the previous results to compare polynomials to sums of independent Rademacher variables. Let us note that, for each $\alpha$, on the side of the
Rademacher random variables we get a different (independent) random variable. Since we have already used $\varepsilon^{\alpha}$ and $\varepsilon_A$ for the product of
components of a random vector of copies of $\varepsilon$ (and this is not what we get here) we use $\varepsilon_{i_{\alpha}}$ (with some injection $\alpha\mapsto i_\alpha\in\N$, stressing the fact that for each $\alpha$ we have an individual random variable). In other words, for the (finite) set
$\{ \alpha \in \mathbb{N}_{0}^{n} \colon \vert \alpha \vert \leq m  \}$ we consider the family of independent identically distributed Rademacher random variables $\{\varepsilon_{i_{\alpha}} \}_{\alpha}$. With this notation at hand, we recover the following from \cite[Theorem~4.1]{ruc}.

\begin{corollary} \label{marceneiro}
Let $X$ be a Banach space.
\begin{enumerate}[(a)]
\item If $X$ has type $2$, then there is a constant $C \geq 1$ such that, for every finite choice of vectors $\{x_\alpha\}_{|\alpha|\le m}$ we have
\[
\Big(\mathbb{E} \Big\| \sum_{|\alpha|\le m}   \varepsilon_{i_\alpha} x_\alpha  \Big\|^2\Big)^{1/2}
	\leq C
\Big( \mathbb{E} \Big\| \sum_{|\alpha|\le m} x_\alpha w^\alpha  \Big\| ^{2} \Big)^{1/2} \,.
\]

\item If $X$ has cotype $2$, then there is a constant $C \geq 1$ such that, for every finite choice of vectors $\{x_\alpha\}_{|\alpha|\le m}$ we have
\[
\Big( \mathbb{E} \Big\| \sum_{|\alpha|\le m} x_\alpha w^\alpha  \Big\| ^{2} \Big)^{1/2}
\leq C
\Big(\mathbb{E} \Big\| \sum_{|\alpha|\le m} 	
  \varepsilon_{i_\alpha} x_\alpha  \Big\|^2\Big)^{1/2}
\,.
\]
\end{enumerate}
\end{corollary}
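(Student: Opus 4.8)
The plan is to derive Corollary~\ref{marceneiro} from Propositions~\ref{RUCvectorvaluedA} and~\ref{RUCvectorvaluedA2} by checking that the Steinhaus monomials provide an orthonormal system to which those propositions apply, and then matching up notation. First I would recall that $(w^{\alpha})_{\alpha\in\N_0^n}$ is an orthonormal system in $L^2(\T^n)$: indeed $\E[w^{\alpha}\overline{w^{\beta}}] = \prod_{k=1}^n \E[w_k^{\alpha_k-\beta_k}]$, and each factor is $1$ if $\alpha_k=\beta_k$ and $0$ otherwise, since $\E[w_k^{j}]=0$ for every integer $j\ne 0$. Restricting to the finite index set $\{\alpha\in\N_0^n : |\alpha|\le m\}$, we list these monomials as an orthonormal sequence $\xi_{i}=w^{\alpha}$ (after fixing the injection $\alpha\mapsto i_\alpha$), and note that $\sum_{|\alpha|\le m} x_\alpha w^{\alpha}$ is literally $\sum_i \xi_i x_i$ in the notation of those propositions.

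For part (a), assuming $X$ has type $2$, Proposition~\ref{RUCvectorvaluedA} applied to this orthonormal family gives
\[
\Big(\E\Big\|\sum_{|\alpha|\le m}\varepsilon_{i_\alpha}x_\alpha\Big\|^2\Big)^{1/2}
\le C\Big(\E\Big\|\sum_{|\alpha|\le m}x_\alpha w^{\alpha}\Big\|^2\Big)^{1/2},
\]
which is exactly the claimed inequality. For part (b), assuming $X$ has cotype $2$, Proposition~\ref{RUCvectorvaluedA2} applied to the same family gives
\[
\Big(\E\Big\|\sum_{|\alpha|\le m}x_\alpha w^{\alpha}\Big\|^2\Big)^{1/2}
\le C\,\E\Big(\Big\|\sum_{|\alpha|\le m}\varepsilon_{i_\alpha}x_\alpha\Big\|^2\Big)^{1/2}
\le C\Big(\E\Big\|\sum_{|\alpha|\le m}\varepsilon_{i_\alpha}x_\alpha\Big\|^2\Big)^{1/2},
\]
where the last step is Jensen's inequality (or simply $\E Y\le(\E Y^2)^{1/2}$ applied with $Y=\|\sum\varepsilon_{i_\alpha}x_\alpha\|$); alternatively one can rephrase Proposition~\ref{RUCvectorvaluedA2} directly with the $L^2$-average on the right, the two being equivalent by Kahane--Khinchin. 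This completes the proof.

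There is essentially no obstacle here: the only point requiring a line of justification is the orthonormality of the Steinhaus monomials, which is a direct computation, and the bookkeeping that identifies the abstract orthonormal sequence $(\xi_i)$ with the concrete family $(w^{\alpha})_{|\alpha|\le m}$ and the abstract $(x_i)$ with the coefficients $(x_\alpha)$. The mild subtlety is only notational — ensuring that $\varepsilon_{i_\alpha}$ denotes genuinely independent Rademacher variables indexed by the multi-indices $\alpha$ (one per monomial), as stipulated in the paragraph preceding the statement — so that the $\varepsilon_i$ produced by Propositions~\ref{RUCvectorvaluedA} and~\ref{RUCvectorvaluedA2} are precisely these. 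With this matching in place, both inequalities are immediate specializations.
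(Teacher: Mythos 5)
Your proposal is correct and is precisely the paper's own argument: the corollary is obtained by observing that the Steinhaus monomials $(w^{\alpha})_{|\alpha|\le m}$ form an orthonormal system in $L^2(\T^n)$ and then specializing Propositions~\ref{RUCvectorvaluedA} and~\ref{RUCvectorvaluedA2} to this family, with the Rademacher variables $\varepsilon_{i_\alpha}$ indexed one per monomial. Your extra remark on reconciling the placement of the expectation in Proposition~\ref{RUCvectorvaluedA2} via Jensen (or Kahane--Khinchin) is a harmless and correct bookkeeping point.
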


Let us note that an analogous result holds for any sequence of characters in the context of Fourier analysis on groups.

We also mention that if $\xi_{0} \in L^{2} (\mu)$ and $\xi$ is a random vector consisting of $n$ independent copies of $\xi_{0}$, then the family $\{ \xi_{A} \colon A \subseteq [n] , \,  \vert A \vert \leq m  \}$ is
orthogonal. Then orthonormality is achieved just by normalising, and we can apply Propositions~\ref{RUCvectorvaluedA} and~\ref{RUCvectorvaluedA2}. Again, for each $A$ we write $\varepsilon_{i_A}$
for an independent copy of a Rademacher random variable (not to be confused with the Walsh monomial $\varepsilon_{A}$). Noting that
\[
\|\xi_A\|_2=\Big(\E\Big|\prod_{i\in A}\xi_i\Big|^2\Big)^{1/2}=\|\xi_0\|_2^{|A|},
\]
we deduce the following.

\begin{corollary}\label{cortipo2}
Let $X$ be a Banach space.
\begin{enumerate}[(a)]
\item If $X$ has  type 2, then there is a constant $C \geq 1$ such that for every non-trivial symmetric random variable $\xi_0\in L^2(\mu)$ and every choice of vectors $\{x_A\}_{|A|\le m}$ we have
\begin{equation*}
\Big(\mathbb{E} \Big\|\sum_{|A|\le m}     \varepsilon_{i_A} \|\xi_0\|_2^{|A|} x_A  \Big\|^2\Big)^{1/2}
	\le C
	\Big(\mathbb{E} \Big\| \sum_{|A|\le m} x_A \xi_A   \Big\|^2 \Big)^{1/2} \,.
\end{equation*}

\item If $X$ has cotype 2, then there is a constant $C \geq 1$ such that for every symmetric random variable $\xi_0\in L^2(\mu)$ and every choice of vectors $\{x_A\}_{|A|\le m}$ we have
\begin{equation*}
\Big(\mathbb{E} \Big\| \sum_{|A|\le m} x_A \xi_A   \Big\|^2 \Big)^{1/2}
	\le C \Big(\mathbb{E} \Big\|\sum_{|A|\le m}  \varepsilon_{i_A} \|\xi_0\|_2^{|A|}  x_A  \Big\|^2\Big)^{1/2}.
\end{equation*}	
\end{enumerate}
\end{corollary}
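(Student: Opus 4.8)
The plan is to derive Corollary~\ref{cortipo2} directly from Propositions~\ref{RUCvectorvaluedA} and~\ref{RUCvectorvaluedA2} by exhibiting the appropriate orthonormal system. First I would fix a non-trivial symmetric random variable $\xi_0 \in L^2(\mu)$ and a random vector $\xi = (\xi_1,\ldots,\xi_n)$ consisting of $n$ independent copies of $\xi_0$. For each $A \subseteq [n]$ with $|A| \le m$ put $\xi_A = \prod_{i \in A} \xi_i$ (with $\xi_\emptyset = 1$). The key computation, already recorded just before the statement, is that for $A \ne B$ the independence and symmetry of the $\xi_i$ force $\E[\xi_A \overline{\xi_B}] = 0$: indeed, picking an index $i$ in the symmetric difference $A \triangle B$, the factor involving $\xi_i$ appears to an odd power (power $1$), and $\E|\xi_i|^{2k}\xi_i = 0$ by symmetry while the remaining factors are independent of it, so the whole expectation vanishes. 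Likewise $\|\xi_A\|_2 = \|\xi_0\|_2^{|A|}$ since $\E|\xi_A|^2 = \prod_{i\in A}\E|\xi_i|^2$. Hence the normalised family $\bigl(\|\xi_0\|_2^{-|A|}\xi_A\bigr)_{|A|\le m}$ is an orthonormal system in $L^2(\mu)$ (note $\|\xi_0\|_2 \ne 0$ because $\xi_0$ is non-trivial).

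Next I would apply the two propositions with this orthonormal system and with the vectors relabelled so that the index $i$ ranges over the finite set $\{A : |A| \le m\}$ — that is, take the "$x_i$" of Proposition~\ref{RUCvectorvaluedA} to be $x_A$ and the "$\xi_i$" to be $\|\xi_0\|_2^{-|A|}\xi_A$. For part (a), if $X$ has type $2$, Proposition~\ref{RUCvectorvaluedA} gives
\[
\Bigl(\E\Bigl\|\sum_{|A|\le m}\varepsilon_{i_A}x_A\Bigr\|^2\Bigr)^{1/2}
\le C\Bigl(\E\Bigl\|\sum_{|A|\le m}\|\xi_0\|_2^{-|A|}\xi_A\, x_A\Bigr\|^2\Bigr)^{1/2},
\]
and then replacing each $x_A$ by $\|\xi_0\|_2^{|A|}x_A$ on both sides turns the right-hand side into $C(\E\|\sum_{|A|\le m} x_A\xi_A\|^2)^{1/2}$ and the left-hand side into the stated $\E\|\sum\varepsilon_{i_A}\|\xi_0\|_2^{|A|}x_A\|^2)^{1/2}$. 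Part (b) is entirely symmetric: apply Proposition~\ref{RUCvectorvaluedA2} in place of Proposition~\ref{RUCvectorvaluedA} and perform the same rescaling $x_A \mapsto \|\xi_0\|_2^{|A|}x_A$.

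There is essentially no obstacle here — the corollary is a routine specialisation — but the one point that needs a word of care is making sure the orthogonality argument does not secretly use independence in a way that fails: it does not, since only pairwise orthogonality (not full independence of the $\xi_A$) is needed, and that follows from the odd-power vanishing explained above. A second minor bookkeeping point is that the propositions are stated for a sequence indexed by $\N$, whereas we have a family indexed by the finite set $\{A : |A|\le m\}$; this is harmless, since one may fix any injection $A \mapsto i_A \in \N$ and extend the orthonormal family arbitrarily, or simply observe that the propositions' proofs only ever involve the finitely many vectors actually summed. Finally, the substitution $x_A \mapsto \|\xi_0\|_2^{|A|}x_A$ is legitimate precisely because $\|\xi_0\|_2 > 0$, which is why the hypothesis that $\xi_0$ is non-trivial (stated explicitly in part (a)) is needed.
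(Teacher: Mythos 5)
Your argument is correct and is essentially the paper's own: the corollary is obtained exactly by noting that independence and symmetry make the family $\{\xi_A\}_{|A|\le m}$ orthogonal with $\|\xi_A\|_2=\|\xi_0\|_2^{|A|}$, normalising it to an orthonormal system, and applying Propositions~\ref{RUCvectorvaluedA} and~\ref{RUCvectorvaluedA2} together with the rescaling $x_A\mapsto\|\xi_0\|_2^{|A|}x_A$. Your bookkeeping remarks (finite index set, non-triviality of $\xi_0$ making $\|\xi_0\|_2>0$) are exactly the points implicit in the paper's deduction, so nothing further is needed.
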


Let us observe that using the contraction principle in the previous inequalities immediately gives
\begin{align*}
\Big(\mathbb{E} \Big\| \sum_{|A|\le m}   \varepsilon_{i_A} x_A  \Big\|^2\Big)^{1/2}
	&\le C \max\{1,\|\xi_0\|_2^{-m}\}
	\Big(\mathbb{E} \Big\| \sum_{|A|\le m} x_A \xi_A   \Big\|^2 \Big)^{1/2},
	\intertext{and}
 \Big(\mathbb{E} \Big\| \sum_{|A|\le m} x_A \xi_A   \Big\|^2 \Big)^{1/2}
	&\le C \max\{1, \|\xi_0\|_2^m\} \Big(\mathbb{E} \Big\|   \sum_{|A|\le m}  \varepsilon_{i_A} x_A  \Big\|^2\Big)^{1/2}.
\end{align*}

Having type or cotype 2 are quite restrictive geometric conditions for a Banach space. What is more, if one is looking for a decoupling inequality that estimates the norm of a polynomial from above and below one needs that the Banach space $X$ enjoys both type and cotype 2. In other words, by a fundamental result of Kwapie\'{n} proved in \cite{Kwapo}, $X$ must by isomorphic to a Hilbert space where these estimates hold trivially.
We try now to find a weaker condition on the space that (allowing exponential dependence on $m$) still provides an analogue result to Corollary~\ref{marceneiro}. We find it in
the \emph{Gaussian Average Property (GAP)} introduced in \cite{CaNi97}. A Banach space $X$ has GAP if there exists $C\geq1$ such that for every finite choice $x_{1}, \ldots , x_{n} \in X$, the operator $T : \ell_{2}^n  \to X^{*}$ defined by $T(e_i) = x_{i}$ satisfies
\begin{equation} \label{gap}
\Big( \E \Big\Vert \sum_{i=1}^{n} x_{i} \gamma_{i} \Big\Vert^{2} \Big)^{1/2} \leq C \pi_{1}(T^*) .
\end{equation}
By \cite[Theorem~1.4]{CaNi97} Banach spaces with type 2 and Banach lattices with finite cotype have GAP. In turn, spaces with GAP have finite cotype (see \cite[Theorem~1.3]{CaNi97}). Finally, we mention that GAP is closely related to the Gordon-Lewis property and the concept of local unconditional structure.\\

The key step in the proof of Proposition~\ref{RUCvectorvaluedA} was~\eqref{ruc2}, which is very similar to~\eqref{gap}. This is, then, our main tool now. However, since~\eqref{gap} involves the 1-summing norm instead of the 2-summing norm in~\eqref{ruc2} we need a Khinchin type inequality to hold. Fortunately we have it in Theorem~\ref{poiss}.
With this we can now get the inequalities we were aiming at. We follow the lines of \cite[Theorem~1.1]{Pi78} (see also \cite[Lemma~2.2]{CaDeSe16}).

\begin{theorem}\label{propgap}
If a Banach space $X$ has GAP, then there is a constant $C\geq 1$ such that for every choice of vectors $\{x_\alpha\}_{|\alpha|\le m}$ we have
\begin{align}\label{ecgap2}
\Big(\mathbb{E} \Big\| \sum_{|\alpha|\le m}   \varepsilon_{i_\alpha} x_\alpha  \Big\|^2\Big)^{1/2}
	\le C \,
	\mathbb{E} \Big\| \sum_{|\alpha|\le m}\sqrt{2}^{|\alpha|} x_\alpha w^\alpha  \Big\| \,.
\end{align}
\end{theorem}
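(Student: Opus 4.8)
The plan is to run the argument of Proposition~\ref{RUCvectorvaluedA} with the GAP inequality \eqref{gap} in place of \eqref{ruc2}; the price for replacing the $2$-summing norm by the $1$-summing norm is exactly one application of the polynomial Kahane--Khinchin inequality (Theorem~\ref{poiss}), and this is what produces the weights $\sqrt2^{\,|\alpha|}$. Enumerate the finite set $\{\alpha\in\N_0^n:|\alpha|\le m\}$ as $\alpha_1,\dots,\alpha_N$ and let $T:\ell_2^N\to X$ be the operator $T(e_i)=x_{\alpha_i}$, so that $T^*(x^*)=\big(x^*(x_{\alpha_i})\big)_{i=1}^N$. Since the Steinhaus monomials $w^{\alpha_1},\dots,w^{\alpha_N}$ are orthonormal in $L^2(\T^n)$, for every $x^*\in X^*$ we have
\[
\|T^*(x^*)\|_{\ell_2^N}=\Big(\sum_{i=1}^N |x^*(x_{\alpha_i})|^2\Big)^{1/2}=\Big(\E\Big|\sum_{|\alpha|\le m} x^*(x_\alpha)\, w^\alpha\Big|^2\Big)^{1/2}.
\]

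The first step is to bound $\pi_1(T^*)$ from above. Apply the scalar case of Theorem~\ref{poiss} with $p=1$, $q=2$ to the scalar polynomial $z\mapsto\sum_{|\alpha|\le m}\sqrt2^{\,|\alpha|}\,x^*(x_\alpha)\,z^\alpha$: contracting it by $1/\sqrt2$ turns each $w^\alpha$ into $2^{-|\alpha|/2}w^\alpha$, so the left-hand side of the inequality becomes exactly $\|T^*(x^*)\|_{\ell_2^N}$ and we obtain
\[
\|T^*(x^*)\|_{\ell_2^N}\le \E\Big|\sum_{|\alpha|\le m}\sqrt2^{\,|\alpha|}\,x^*(x_\alpha)\, w^\alpha\Big| = \E\big|x^*(S)\big|,\qquad\text{with }\ S:=\sum_{|\alpha|\le m}\sqrt2^{\,|\alpha|}\,x_\alpha\, w^\alpha .
\]
Given finitely many $x_1^*,\dots,x_k^*\in X^*$, sum this over $j$ and use, for each fixed realization of $w$, the pointwise bound $\sum_j|x_j^*(S)|\le\|S\|\,\sup_{y\in B_X}\sum_j|x_j^*(y)|$ together with $\sup_{y\in B_X}\sum_j|x_j^*(y)|=\sup_{\phi\in B_{X^{**}}}\sum_j|\phi(x_j^*)|$ (the two suprema coincide because $\phi\mapsto\sum_j|\phi(x_j^*)|$ is weak${}^{*}$-continuous and $B_X$ is weak${}^{*}$-dense in $B_{X^{**}}$); taking expectations yields
\[
\sum_{j=1}^k\|T^*(x_j^*)\|_{\ell_2^N}\le \E\|S\|\cdot\sup_{\phi\in B_{X^{**}}}\sum_{j=1}^k|\phi(x_j^*)| ,
\]
that is, $\pi_1(T^*)\le\E\|S\|=\E\big\|\sum_{|\alpha|\le m}\sqrt2^{\,|\alpha|}x_\alpha w^\alpha\big\|$.

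Now feed this into GAP: applying \eqref{gap} to $T$ gives
\[
\Big(\E\Big\|\sum_{|\alpha|\le m}\gamma_{i_\alpha}x_\alpha\Big\|^2\Big)^{1/2}\le C\,\pi_1(T^*)\le C\,\E\Big\|\sum_{|\alpha|\le m}\sqrt2^{\,|\alpha|}x_\alpha w^\alpha\Big\| ,
\]
where the $\gamma_{i_\alpha}$ are independent standard Gaussians (after relabeling, $\sum_\alpha\gamma_{i_\alpha}x_\alpha=\sum_i\gamma_i x_{\alpha_i}$, which is what \eqref{gap} controls). Finally, passing from Gaussian to Rademacher averages on the left needs no hypothesis on $X$: apply \eqref{walshchica} in degree $1$ to the linear polynomial $z\mapsto\sum_i z_i x_{\alpha_i}$ with the (symmetric) Gaussian vector, and then Lemma~\ref{lemma1} in degree $1$ to replace Steinhaus by Rademacher, to obtain $\big(\E\|\sum_{|\alpha|\le m}\varepsilon_{i_\alpha}x_\alpha\|^2\big)^{1/2}\le C'\big(\E\|\sum_{|\alpha|\le m}\gamma_{i_\alpha}x_\alpha\|^2\big)^{1/2}$; chaining the last two displays gives \eqref{ecgap2}. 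I do not expect a genuine obstacle here; the only points that require care are purely bookkeeping: one must contract by exactly $1/\sqrt2$ in Theorem~\ref{poiss} so that the weight $\sqrt2^{\,|\alpha|}$ lands on the right-hand side, and one must keep in mind the identity $\sup_{B_X}=\sup_{B_{X^{**}}}$ when reading $\pi_1(T^*)$ off the definition of the $1$-summing norm.
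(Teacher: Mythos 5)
Your proposal is correct and follows essentially the same route as the paper: the same operator $T$, the bound $\pi_1(T^*)\le \E\big\|\sum_{|\alpha|\le m}\sqrt2^{\,|\alpha|}x_\alpha w^\alpha\big\|$ via orthonormality of the Steinhaus monomials and Theorem~\ref{poiss} with $p=1$, $q=2$, the GAP inequality \eqref{gap}, and the degree-one comparison of Rademacher, Steinhaus and Gaussian sums through Lemma~\ref{lemma1} and \eqref{walshchica}. The only difference is the order in which the steps are chained, which is immaterial.
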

\begin{proof}
The proof is very similar to that of Proposition~\ref{RUCvectorvaluedA}. Let $\Lambda = \{ \alpha \in \mathbb{N}_{0}^{n} \colon \vert \alpha \vert \leq m  \}$ and, as before, define the operator $T: \ell_2(\Lambda) \to X$ by $T(e_\alpha)=x_\alpha$.
Combining Lemma~\ref{lemma1}, ~\eqref{walshchica} and~\eqref{gap} we have
\begin{equation*}
\Big(\mathbb{E} \Big\| \sum_{|\alpha|\le m}  \varepsilon_{i_\alpha} x_\alpha \Big\|^2\Big)^{1/2}
	\le C
	\Big(\mathbb{E} \Big\| \sum_{|\alpha|\le m}  \gamma_{i_\alpha} x_\alpha \Big\|^2 \Big)^{1/2}
\le \widetilde{C} \pi_1(T^\ast).
\end{equation*}	
We look now for a good upper bound for $\pi_1(T^\ast)$. First of all observe that, for each  $x^*\in X^*$ we have
\[
\Big(\mathbb{E}\Big|\sum_{|\alpha|\le m}  x^*(x_\alpha) w^\alpha \Big|^2 \Big)^{1/2}
	= \|(x^*(x_\alpha))_{|\alpha|\le m}\|_{\ell_2(\Lambda)}
=  \|T^*(x^*)\|_{\ell_2(\Lambda)}.
\]
With this, given  a finite collection of vectors $x_i^*\in X^*$ and using  Theorem~\ref{poiss} we get
\begin{align*}
\sum_{i=1}^N \|& T^*(x_i^*)  \|_{\ell_2(\Lambda)}
	=\sum_{i=1}^N \Big(\mathbb{E}\Big|\sum_{|\alpha|\le m}  x_i^*(x_\alpha) w^\alpha \Big|^2 \Big)^{1/2}
\le  \sum_{i=1}^N \mathbb{E}\Big|\sum_{|\alpha|\le m} \sqrt{2}^{|\alpha|} x_i^*(x_\alpha) w^\alpha \Big| \\
&	=  \mathbb{E} \sum_{i=1}^N\Big|\sum_{|\alpha|\le m}  \sqrt{2}^{|\alpha|} x_i^*(x_\alpha) w^\alpha \Big|
\le  \E\Big\|\sum_{|\alpha|\le m} \sqrt{2}^{|\alpha|} x_\alpha w^\alpha \Big\| \sup_{x^{**}\in B_{X^{**}}}\sum_{i=1}^N |x^{**}(x_i^*)|.
\end{align*}
The definition of the 1-summing norm gives
\[
\pi_1(T^\ast)\leq  \E\Big\|\sum_{|\alpha|\le m}  \sqrt{2}^{|\alpha|} x_\alpha w^\alpha  \Big\|\,,
\]
and completes the proof.
\end{proof}

We can use a similar argument to get a dual result. We recall first two concepts that are going to play a role on the proof. On the one hand, an operator $T:X\rightarrow Y$ is said to be \emph{factorable}  if there is a measure space $(\Omega,\Sigma,\mu)$ and operartors $A:L^\infty(\mu)\rightarrow Y^{**}$ and $B:X\rightarrow L^\infty(\mu)$ such that $i T= AB$ where $i:Y\hookrightarrow Y^{**}$ is the natural inclusion. In other words, $iT$ factors through $L^\infty(\mu)$, and we have the following commutative diagram
\begin{equation} \label{marais}
\begin{tikzcd}
 X \arrow[r,"T"] \arrow[rd,swap,"B"] & Y \arrow[r,"i"] &  Y^{**}  \\
& L^\infty(\mu) \arrow[ru,swap,"A"]  &
\end{tikzcd}
\end{equation}
We write
\[
\gamma_\infty(T)=\inf \|A\|\|B\|,
\]
where the infimum is taken over all the possible factorizations.
We refer the reader to \cite[Chapters~7 and~9]{DiJaTo95} for a detailed exposition. \\
On the other hand, a Banach space $X$ is said to be \emph{$K$-convex} if the 1-homogeneous projection is bounded on $L^{2} (\{-1,1\}^{\infty}, X)$. More precisely,
the mapping defined on Walsh polynomials by
\[R \Big( \sum_{A } x_{A} \varepsilon_{A} \Big) = \sum_{\vert A \vert =1} x_{A} \varepsilon_{A},\]
extends to a bounded operator
$R:L^{2} (\{-1,1\}^{\infty}, X)\to L^{2} (\{-1,1\}^{\infty}, X)$ (which is known as the Rademacher projection). A simple duality argument shows that $X$ is $K$-convex if and only if $X^*$ is $K$-convex. Also, a very important result states that a Banach space is $K$-convex if and only if it has non-trivial type (see e.g. \cite[Theorem~13.3]{DiJaTo95}). \\
Finally, we know from \cite[Theorem~1.7]{CaNi97} that a Banach space $X$ is of finite cotype and its dual has GAP if and only if $X$ is $K$-convex and there is a constant $C\geq 1$ such that for every $T:\ell_2^n\rightarrow X$ defined by $T(e_i)=x_i$ we have
\begin{equation}\label{gapdual}
C^{-1} \gamma_\infty (T) \le \Big( \E \Big\Vert \sum_{i=1}^{n} x_{i} \gamma_{i} \Big\Vert^{2} \Big)^{1/2} \le C \gamma_\infty (T).
\end{equation}

\begin{theorem}\label{propgapduo}
Let $X$ be a Banach space of finite cotype such that $X^*$ has GAP and fix $q>\cot(X)$. There is a constant $C\geq 1$ such that for every finite choice of vectors $\{x_\alpha\}_{|\alpha|\le m}$ we have
\begin{equation}\label{ecgapdu}
\Big(\mathbb{E} \Big\| \sum_{|\alpha|\le m} x_\alpha w^\alpha  \Big\|^q\Big)^{1/q}
	\le C\Big(\mathbb{E} \Big\| \sum_{|\alpha|\le m}   \varepsilon_{i_\alpha} \sqrt{\frac{q}{2}}^{|\alpha|} x_\alpha  \Big\|^2\Big)^{1/2}.
\end{equation}
\end{theorem}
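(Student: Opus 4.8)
The plan is to dualize the proof of Theorem~\ref{propgap}, replacing the $1$-summing norm and~\eqref{gap} by the factorization norm $\gamma_\infty$ and the characterization~\eqref{gapdual}. Write $\Lambda=\{\alpha\in\mathbb{N}_0^n:|\alpha|\le m\}$ and define $T:\ell_2(\Lambda)\to X$ by $T(e_\alpha)=\sqrt{q/2}^{\,|\alpha|}x_\alpha$; the degree weights are absorbed into $T$ precisely so that the right-hand side of~\eqref{ecgapdu} is $(\E\|\sum_\alpha\varepsilon_{i_\alpha}Te_\alpha\|^2)^{1/2}$. The aim is the chain
\[
\Big(\E\Big\|\sum_{|\alpha|\le m}x_\alpha w^\alpha\Big\|^q\Big)^{1/q}\le C_1\,\gamma_\infty(T)\le C_2\Big(\E\Big\|\sum_{|\alpha|\le m}\gamma_{i_\alpha}Te_\alpha\Big\|^2\Big)^{1/2}\le C_3\Big(\E\Big\|\sum_{|\alpha|\le m}\varepsilon_{i_\alpha}Te_\alpha\Big\|^2\Big)^{1/2}.
\]
The middle inequality is~\eqref{gapdual}, valid since $X$ has finite cotype and $X^*$ has GAP. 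The last inequality compares Gaussian and Rademacher averages of a \emph{linear} expression in the independent variables $\varepsilon_{i_\alpha}$, hence holds with a constant depending only on $\cot(X)$ (no exponential-in-$m$ factor), and can be routed through Lemma~\ref{lemma1} and~\eqref{walshchica} exactly as in the proof of Theorem~\ref{propgap}. So everything reduces to the first inequality.

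For that first inequality I would argue by duality, mirroring the computation bounding $\pi_1(T^*)$ in Theorem~\ref{propgap}. Since $L^{q'}(\T^n;X^*)$ norms $L^{q}(\T^n;X)$, we have $(\E\|\sum_\alpha x_\alpha w^\alpha\|^q)^{1/q}=\sup\{|\E_w\langle G(w),\sum_\alpha x_\alpha w^\alpha\rangle|:\|G\|_{L^{q'}(X^*)}\le1\}$. For each $G$ only the coefficients $g_\alpha:=\E_w[w^\alpha G(w)]\in X^*$ with $\alpha\in\Lambda$ enter, and $\E_w\langle G(w),\sum_\alpha x_\alpha w^\alpha\rangle=\sum_\alpha g_\alpha(x_\alpha)=\operatorname{tr}(R_G\,T)$ for the operator $R_G:X\to\ell_2(\Lambda)$, $R_Gx=\sum_{\alpha}\sqrt{2/q}^{\,|\alpha|}g_\alpha(x)\,e_\alpha$. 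Trace duality between $\gamma_\infty$ and the $1$-summing ideal (see \cite[Chapters~9 and~17]{DiJaTo95}) gives $|\operatorname{tr}(R_GT)|\le\pi_1(R_G)\,\gamma_\infty(T)$, so it suffices to show $\pi_1(R_G)\le C\|G\|_{L^{q'}(X^*)}$ uniformly. To do so, test on $x_1,\dots,x_N\in X$: with $h_j:=\langle G(\cdot),x_j\rangle\in L^{q'}(\T^n)$ one has $\|R_Gx_j\|_{\ell_2(\Lambda)}=(\sum_{\alpha\in\Lambda}(2/q)^{|\alpha|}|\widehat{h_j}(-\alpha)|^2)^{1/2}$, and since $\sqrt{2/q}\le\sqrt{q'/2}$ this weighted $\ell_2$-sum of Fourier coefficients is dominated—via Theorem~\ref{poiss} in the form comparing the $L^{q'}$- and $L^2$-norms of a degree-$\le m$ polynomial (i.e.\ contraction by $\sqrt{q'/2}$)—by the $L^{q'}$-norm of the relevant degree-$\le m$ trigonometric piece of $h_j$, hence by $\|h_j\|_{L^{q'}}$. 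Then Fubini plus $\sum_j|h_j(w)|\le\|G(w)\|_{X^*}\sup_{x^*\in B_{X^*}}\sum_j|x^*(x_j)|$ and Hölder in $w$ give $\sum_j\|R_Gx_j\|_{\ell_2(\Lambda)}\le C\|G\|_{L^{q'}(X^*)}\sup_{x^*\in B_{X^*}}\sum_j|x^*(x_j)|$, which is $\pi_1(R_G)\le C\|G\|_{L^{q'}(X^*)}$.

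I expect the step controlling $\|R_Gx_j\|_{\ell_2(\Lambda)}$ by $\|h_j\|_{L^{q'}}$ to be the main obstacle: $h_j$ is an arbitrary $L^{q'}$-function whereas Theorem~\ref{poiss} applies to (analytic) polynomials, so one has to extract the relevant degree-$\le m$ piece of $h_j$ without paying a constant that grows with $n$ or $m$. This is exactly the point at which using the $q$-th moment on the left (with $q>\cot(X)$, forcing $q'<2$) together with the matching $\sqrt{q/2}^{\,|\alpha|}$-weights on the right is indispensable—the weights are chosen so that the associated contraction factor never exceeds the Kahane–Khinchin contraction $\sqrt{q'/2}$, keeping all constants absolute—and it is the technical heart of the argument. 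The remainder is bookkeeping: assembling the displayed chain, checking the Gaussian–Rademacher comparison is absolute in the linear regime, and verifying that the duality and trace-duality manipulations only involve finite-rank operators so no subtlety about maximal ideals arises.
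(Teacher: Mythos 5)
Your overall architecture is close in spirit to the paper's (same operator $T(e_\alpha)=\sqrt{q/2}^{\,|\alpha|}x_\alpha$, the step $\gamma_\infty(T)\lesssim$ Gaussian average via~\eqref{gapdual}, and a Gaussian--Rademacher comparison using finite cotype), but the first link of your chain, $\big(\E\|\sum_\alpha x_\alpha w^\alpha\|^q\big)^{1/q}\le C_1\gamma_\infty(T)$, is where the proposal breaks. Note that your proof of this link (vector duality, trace duality $|\operatorname{tr}(R_GT)|\le\pi_1(R_G)\gamma_\infty(T)$, and the bound $\pi_1(R_G)\le C\|G\|_{L^{q'}(X^*)}$) uses no hypothesis on $X$ and no relation between $q$ and $\cot(X)$ at all. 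If it were correct, the inequality would hold for every Banach space, and it does not: take $m=1$, $X=\ell_\infty^{2^n}$, and $x_j$ with coordinates $(x_j)_\sigma=\sigma_j/\sqrt n$ for $\sigma\in\{-1,1\}^n$; then $\gamma_\infty(T)\le\sqrt{q/2}\,\|T\|=\sqrt{q/2}$ (factor through $\ell_\infty^{2^n}$ itself), while $\E\|\sum_j x_jw_j\|_\infty\ge \frac1{\sqrt n}\E\sum_j|\operatorname{Re}w_j|\gtrsim\sqrt n$. So some step in your $\pi_1(R_G)$ estimate must fail, and it is the summation over $j$: from the per-vector bounds $\|R_Gx_j\|_{\ell_2(\Lambda)}\le\|h_j\|_{L^{q'}}$ you need $\sum_j\|h_j\|_{L^{q'}}\lesssim\|G\|_{L^{q'}(X^*)}\sup_{x^*\in B_{X^*}}\sum_j|x^*(x_j)|$, but the pointwise inequality $\sum_j|h_j(w)|\le\|G(w)\|_{X^*}\sup_{x^*}\sum_j|x^*(x_j)|$ only controls $\big\|\sum_j|h_j|\big\|_{L^{q'}}$, which is \emph{smaller} than $\sum_j\|h_j\|_{L^{q'}}$; Minkowski goes the wrong way for $q'>1$. (This is precisely why the analogous computation in Theorem~\ref{propgap} works: there the exponent is $1$ and sum and expectation commute.) Incidentally, the sub-step you flagged as the main obstacle is not the real problem: $\|R_Gx_j\|_{\ell_2(\Lambda)}\le\|h_j\|_{L^{q'}}$ is true, but it should not be proved by projecting $h_j$ onto its degree-$\le m$ analytic part (the $n$-fold Riesz projection costs a constant exponential in $n$ on $L^{q'}(\T^n)$); instead one pairs $h_j$ against the degree-$\le m$ polynomial built from its own coefficients, applies H\"older and Theorem~\ref{poiss}, and cancels one factor --- this is exactly the paper's proof that its operator $R$ is a contraction.

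The missing ingredient is where $q>\cot(X)$ enters. The paper passes from the $q$-th moment to the factorization norm not by trace duality but in two steps: Kwapie\'n's theorem gives $\big(\E\|\sum_\alpha x_\alpha w^\alpha\|^q\big)^{1/q}\le\pi_q(S^*)$ for $S:X^*\to L^q(\T^n)$, $S(x^*)=\sum_\alpha x^*(x_\alpha)w^\alpha$, and then Maurey's theorem (every bounded operator from $L^\infty(\mu)$ into a space of cotype $<q$ is $q$-summing) together with the ideal property yields $\pi_q(S^*)\le C\gamma_\infty(S^*)$; finally $S^*$ is factored as $iTR$ with $\|R\|\le1$ by the hypercontractive argument above, giving $\gamma_\infty(S^*)\le\gamma_\infty(T)$. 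Your trace-duality framework could only be salvaged if you replaced the $\pi_1$-estimate by one that genuinely uses the cotype hypothesis (effectively re-proving the $\pi_q$ versus $\gamma_\infty$ comparison), so as written the proposal has a genuine gap rather than a repairable slip.
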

\begin{proof}
Similarly as in Proposition~\ref{RUCvectorvaluedA2}, we define the operator $S: X^* \to L^q(\T^n)$ by
\[
S(x^*)=\sum_{|\alpha|\le m}  x^*(x_\alpha) w^\alpha.
\]
From \cite[Theorem~2]{Kw70} (see also \cite[Proposition~1.1]{Pi78} and \cite[Corollary~5.21]{DiJaTo95}) we have
\begin{equation*}
\Big(\mathbb{E} \Big\| \sum_{|\alpha|\le m} x_\alpha w^\alpha  \Big\|^q\Big)^{1/q}\le \pi_q(S^*) \,,
\end{equation*}
and the challenge is to bound properly $\pi_q(S^*)$. We do this in several stages, the first one being to bound it by $\gamma_{\infty}(S^{*})$. There are two things to be observed first. One, that we can restrict the codomain of $S^*$ to $X$ since the image of $S^*$ actually
lies on $X$ and its easy to check that the $\pi_q$-norm remains the same. The second one, that we know from \cite[Proposition~1.4]{MyP} (see also \cite[Theorem~11.14]{DiJaTo95}) that there is a constant $C\geq 1$ such that
\[
\pi_q(U)\le C \|U\|
\]
for every $U:L^\infty(\mu)\rightarrow X^{**}$  (here we are using that, by  the principle of local reflexivity, $X$ and $X^{**}$ share the same cotype; see \cite[Theorem~11.2.4]{kalton}). Then, if we factor $iS^*=AB$ through some $L^\infty(\mu)$ as in~\eqref{marais}, this and the  ideal property of $q$-summing operators give
\[
\pi_q(S^*)=\pi_q(iS^*)=\pi_q(AB)\leq \pi_q(A)\|B\|\le C \|A\|\|B\| \,,
\]
hence
\[
\pi_q(S^*)\le C \gamma_\infty(S^*).
\]
The second stage is to bound $\gamma_\infty(S^*)$. Let $R:L^{q^*}(\T^n)\rightarrow \ell_2(\Lambda)$ and $T: \ell_2(\Lambda) \to X$ be the operators defined by $R(f)=(\sqrt{2/q}^{|\alpha|}\widehat{f}(\alpha))_\alpha$ and $T(e_\alpha)=  \sqrt{q/2}^{|\alpha|}x_\alpha$ respectively. We have the following commutative diagram
\[
\begin{tikzcd}
L^{q^*}(\mu) \arrow[r,"S^*"] \arrow[d,swap,"R"] &  X^{**}  \\
\ell_2(\Lambda) \arrow[r,"T"] & X \arrow[hookrightarrow,swap]{u}{i}
\end{tikzcd}
\]
By the ideal property of factorable operators \cite[Theorem~7.1]{DiJaTo95} we get
\begin{equation*}
\gamma_\infty(S^*) = \gamma_\infty(iTR) \le
\gamma_\infty(T)\|R\|.
\end{equation*}
So, it only remains to bound these two factors. We begin by showing that $R$ is a contraction. Given $f\in L^{q^*}(\T^n)$ we can use Theorem~\ref{poiss} to get
\begin{align*}
\|R(f)\|_{\ell_2(\Lambda)}^2 &=\sum_{|\alpha|\le m} \Big(\frac{2}{q}\Big)^{|\alpha|} |\widehat{f}(\alpha)|^2=\E\Big[f(w)\sum_{|\alpha|\le m} \Big(\frac{2}{q}\Big)^{|\alpha|} \overline{\widehat{f}(\alpha)} \overline{w}^\alpha\Big]
\\ &\le \|f\|_{L_{q^*}} \Big\| \sum_{|\alpha|\le m} \Big(\frac{2}{q}\Big)^{|\alpha|} \widehat{f}(\alpha) w^\alpha\Big\|_{L^{q}}
\le  \|f\|_{L_{q^*}} \Big\| \sum_{|\alpha|\le m} \sqrt{\frac{2}{q}}^{|\alpha|} \widehat{f}(\alpha) w^\alpha\Big\|_{L^{2}}
\\ &=  \|f\|_{L_{q^*}} \|R(f)\|_{\ell_2(\Lambda)}.
\end{align*}
And, since $f$ was arbitrary, this gives $\Vert R \Vert \leq 1$.\\
Finally,  applying~\eqref{gapdual} and \cite[Theorem~12.27]{DiJaTo95} (which is the linear case of Theorem~\ref{compara} above) we obtain
\[
\gamma_\infty(T)\le \widetilde{C} \Big(\mathbb{E} \Big\| \sum_{|\alpha|\le m}   \gamma_{i_\alpha} \sqrt{\frac{q}{2}}^{|\alpha|} x_\alpha  \Big\|^2\Big)^{1/2}
\le \dbtilde{C} \Big(\mathbb{E} \Big\| \sum_{|\alpha|\le m}   \varepsilon_{i_\alpha} \sqrt{\frac{q}{2}}^{|\alpha|} x_\alpha  \Big\|^2\Big)^{1/2}\,,
\]
and this completes the proof.
\end{proof}

We finish this section with two inequalities in this context for tetrahedral polynomials.

\begin{corollary} \label{corgap}
\text{}
\begin{enumerate}[(a)]
\item Let $X$ be a Banach space with GAP. Then there is a constant $C\geq 1$ such that for every finite choice of vectors $\{x_A\}_{|A|\le m}$ and every  symmetric non-trivial random variable $\xi_0$  we have
\[
 \Big(\mathbb{E} \Big\|   \sum_{|A|\le m}  \varepsilon_{i_A} x_A  \Big\|^2\Big)^{1/2}
	\le C^m \mathbb{E} \Big\| \sum_{|A|\le m} x_A \xi_A   \Big\|.
\]

\item Let $X$ be a Banach space of finite cotype such that $X^*$ has GAP, let $\xi_0$ be a symmetric random variable with finite $s$-norm for some $s> \cot(X)$ and fix $1\le p <s$. There is a constant $C\geq 1$ such that for every finite choice of vectors $\{x_A\}_{|A|\le m}$ we have
\[
 \Big(\mathbb{E} \Big\| \sum_{|A|\le m} x_A \xi_A   \Big\|^p\Big)^{1/p}
	\le C^m \Big(\mathbb{E} \Big\|   \sum_{|A|\le m}  \varepsilon_{i_A} x_A  \Big\|^2\Big)^{1/2}\,.
\]
\end{enumerate}
\end{corollary}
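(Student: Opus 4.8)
The plan is to derive both inequalities from the Steinhaus statements already at our disposal --- Theorem~\ref{propgap} for (a) and Theorem~\ref{propgapduo} for (b) --- by first restricting those to tetrahedral polynomials and then exchanging the Steinhaus vector $w$ for the general symmetric vector $\xi$ by means of Theorem~\ref{compara}. Restricting to the tetrahedral case just means letting the multi-indices $\alpha$ in Theorems~\ref{propgap} and~\ref{propgapduo} range over $\{0,1\}^n$, so that $w^\alpha=w_A$ and the independent Rademacher variable $\varepsilon_{i_\alpha}$ becomes the $\varepsilon_{i_A}$ of the statement. The only bookkeeping point is that those theorems carry weights $\sqrt{2}^{|A|}$, resp.\ $\sqrt{q/2}^{|A|}$; since $|A|\le m$ these are at most $\sqrt{2}^{m}$, resp.\ $\sqrt{q/2}^{m}$, so they only contribute to the $C^m$ factor.

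\textbf{Part (a).} First I would apply Theorem~\ref{propgap} to the tetrahedral polynomial $P(z)=\sum_{|A|\le m}x_A z_A$, obtaining $\bigl(\mathbb{E}\|\sum_{|A|\le m}\varepsilon_{i_A}x_A\|^2\bigr)^{1/2}\le C\,\mathbb{E}\|\sum_{|A|\le m}\sqrt{2}^{|A|}x_A w_A\|$. Next I would remove the weights: writing $P_k$ for the $k$-homogeneous part of $P$, one has $\sum_{|A|\le m}\sqrt{2}^{|A|}x_A w_A=\sum_{k=0}^{m}\sqrt{2}^{k}P_k(w)$, so the triangle inequality together with Proposition~\ref{lemma3} (used with $p=1$ and $\xi_0$ a Steinhaus variable, which is symmetric and non-trivial) bounds this by $\sqrt{2}^{m}\sum_{k=0}^{m}\mathbb{E}\|P_k(w)\|\le \sqrt{2}^{m}(m+1)C_1^{m}\,\mathbb{E}\|P(w)\|\le C_2^{m}\,\mathbb{E}\|\sum_{|A|\le m}x_A w_A\|$, the polynomial factor being absorbed into the exponential. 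Finally \eqref{walshchica} with $p=1$ gives $\mathbb{E}\|\sum_{|A|\le m}x_A w_A\|\le C_3^{m}\,\mathbb{E}\|\sum_{|A|\le m}x_A\xi_A\|$, and concatenating the three bounds yields (a).

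\textbf{Part (b).} Here I would begin from the opposite end. Since $X$ has non-trivial cotype, $\xi_0$ has finite $s$-norm with $s>\cot(X)$ and $1\le p<s$, inequality \eqref{propcomparacion} applies and gives $\bigl(\mathbb{E}\|\sum_{|A|\le m}x_A\xi_A\|^p\bigr)^{1/p}\le C_1^{m}\bigl(\mathbb{E}\|\sum_{|A|\le m}x_A w_A\|^p\bigr)^{1/p}$. I would then fix some $q>\max(p,\cot(X))$ (depending only on $X$ and $p$, not on $m$; note $\cot(X)\ge 2$ forces $q>2$), so that by monotonicity of the $L^r(\mathbb{T}^n)$-norms and Theorem~\ref{propgapduo} applied to the same tetrahedral polynomial, $\bigl(\mathbb{E}\|\sum_{|A|\le m}x_A w_A\|^p\bigr)^{1/p}\le\bigl(\mathbb{E}\|\sum_{|A|\le m}x_A w_A\|^q\bigr)^{1/q}\le C\,\bigl(\mathbb{E}\|\sum_{|A|\le m}\varepsilon_{i_A}\sqrt{q/2}^{|A|}x_A\|^2\bigr)^{1/2}$. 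In this last expression the weights multiply independent Rademacher variables directly, so the contraction principle (Theorem~\ref{contr}, real scalars, with $\|\lambda\|_\infty=\sqrt{q/2}^{m}$) bounds it by $\sqrt{q/2}^{m}\bigl(\mathbb{E}\|\sum_{|A|\le m}\varepsilon_{i_A}x_A\|^2\bigr)^{1/2}$. Multiplying the constants gives (b).

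I do not expect a genuine obstacle, the statement being a corollary of results already proved. The one step deserving some care is the removal of the $\sqrt{2}^{|A|}$ weights in part (a): unlike in (b), where they sit in front of independent Rademacher variables and can be stripped off by the contraction principle, in (a) they multiply a Walsh/Steinhaus tetrahedral polynomial, so one must instead decompose into homogeneous parts and invoke Proposition~\ref{lemma3} in order to keep the constant of the form $C^m$.
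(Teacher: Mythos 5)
Your argument is correct, and for part (b) it is essentially the paper's own proof: start from \eqref{propcomparacion}, pass from the $p$-norm to the $q$-norm of the Steinhaus polynomial (Jensen/monotonicity, with $q>\max(p,\cot(X))$; the paper simply takes $q=s$), apply Theorem~\ref{propgapduo}, and strip the weights $\sqrt{q/2}^{|A|}$ with the contraction principle on the independent Rademacher sum. In part (a) you reach the same conclusion by a slightly different handling of the weights: the paper first applies the contraction principle on the Rademacher side, i.e.\ it bounds $\bigl(\E\|\sum\varepsilon_{i_\alpha}x_\alpha\|^2\bigr)^{1/2}$ by $\sqrt{2}^{m}\bigl(\E\|\sum\varepsilon_{i_\alpha}\sqrt{2}^{-|\alpha|}x_\alpha\|^2\bigr)^{1/2}$ and then applies Theorem~\ref{propgap} to the rescaled coefficients $\sqrt{2}^{-|\alpha|}x_\alpha$, so that the weights cancel and $\E\|\sum x_\alpha w^\alpha\|$ appears directly, after which Theorem~\ref{compara}~\ref{landa1} finishes as in your write-up. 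You instead apply Theorem~\ref{propgap} to the original coefficients and remove the weights on the Steinhaus-polynomial side through the homogeneous decomposition and Proposition~\ref{lemma3}; this is valid (Steinhaus variables are symmetric and non-trivial, and the factor $(m+1)\sqrt{2}^{m}C_1^{m}$ is still of the form $C^m$), but it invokes heavier machinery than necessary, since the weights sit in front of independent Rademacher variables after rescaling and can be handled by contraction exactly as you yourself do in part (b).
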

\begin{proof}
For the first statement just observe that using the contraction principle (see Theorem~\ref{contr}) and~\eqref{ecgap2} we get
\begin{align*}
    \Big(\mathbb{E} \Big\| \sum_{|\alpha|\le m}   \varepsilon_{i_\alpha} x_\alpha  \Big\|^2\Big)^{1/2}
&\leq C\sqrt{2}^m \Big(\mathbb{E} \Big\| \sum_{|\alpha|\le m}   \varepsilon_{i_\alpha} \sqrt{2}^{-|\alpha|} x_\alpha  \Big\|^2\Big)^{1/2}
\\ &\leq C\sqrt{2}^m \E\Big\|\sum_{|\alpha|\le m} x_\alpha w^\alpha  \Big\|.
\end{align*}
Combining this with Theorem~\ref{compara}~\ref{landa1} proves the claim.
On the other hand, choosing a suitable $q$ (for example $q=s$) and applying the contraction principle  to~\eqref{ecgapdu} we deduce
\[
\Big(\mathbb{E} \Big\| \sum_{|\alpha|\le m} x_\alpha w^\alpha  \Big\|^q\Big)^{1/q}
	\le C\sqrt{\frac{q}{2}}^{m}\Big(\mathbb{E} \Big\| \sum_{|\alpha|\le m}   \varepsilon_{i_\alpha}  x_\alpha  \Big\|^2\Big)^{1/2}.
\]
Joining this with \eqref{propcomparacion} and Jensen's inequality proves the second statement.
\end{proof}

Notice that for spaces $X$ such that $X$ and $X^*$ have GAP (see \cite[Theorem~1.7]{CaNi97} for a characterization) we can join Theorems \ref{propgap} and \ref{propgapduo} as well as both inequalities from Corollary \ref{corgap} (the finite cotype hypothesis is included in the fact that $X$ has GAP). Under such conditions we get estimates from above and below comparing the norm of a random polynomial with the norm of an independent random sum of its coefficients. In particular, this holds for Banach lattices of non-trivial type. Recall that $K$-convexity is the same as having non-trivial type and it is a self-dual property. So the dual of a Banach lattice $X$ of non-trivial type also has non-trivial type. Therefore, both $X$ and $X^*$ have finite cotype (see \cite[Corollary~11.9 and Proposition~11.10]{DiJaTo95}). Finally, since $X$ and $X^*$ are Banach lattices with finite cotype, both have GAP.

\section{One-variable decoupling}\label{sec1dec}
In this section we study one-variable decoupling of $m$-homogeneous tetrahedral polynomials. Instead of comparing $P(\xi)=M(\xi,\ldots,\xi)$ to its decoupled version $M(\xi^{(1)},\ldots,\xi^{(m)})$, we will only replace $\xi$ with an iid copy $\xi'$ in one entry to obtain $M(\xi',\xi,\ldots,\xi)$. Of course this time we are aiming at comparing the $p$-norms of both objects up to a constant $C$ not depending on $m$. Note that if the constants were allowed to grow with $m$, most of the results of this section would be direct applications of our previous results. To achieve our goal we have to use specific properties of the random variables at hand so we restrict ourselves to gaussian, Steinhaus and Rademacher variables.
We mention that some inequalities in this section can be thought of as Markov type inequalities for homogeneous polynomials in the sense of \cite{harris}.

We are mostly interested here in Steinhaus variables, but we start by dealing with gaussian variables to set a benchmark for the type of inequality we are looking for. We show a one-variable version of~\eqref{decogauss} that follows the ideas from \cite[Theorem~2]{Kw87}.

\begin{proposition}
\label{1decogauss}
Let $P:\C^n\rightarrow X$ be an $m$-homogeneous tetrahedral polynomial.
For every $1\le p<\infty$ we have
	\begin{align*}
	\frac{1}{\sqrt{e}}(\E\|P(\gamma)\|^p)^{1/p}
	\leq \sqrt m (\E\|M(\gamma',\gamma,\ldots,\gamma)\|^p)^{1/p}
	\leq \sqrt{e}(\E\|P(\gamma)\|^p)^{1/p}.
	\end{align*}
\end{proposition}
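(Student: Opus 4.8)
The plan is to run the Gaussian rotation argument from Kwapień's proof of~\cite[Theorem~2]{Kw87} together with a Fourier‑mode extraction. Throughout write
\[
M_k:=M(\underbrace{\gamma',\ldots,\gamma'}_{k},\underbrace{\gamma,\ldots,\gamma}_{m-k}),
\]
so that $M_0=P(\gamma)$ and $M_1=M(\gamma',\gamma,\ldots,\gamma)$. The basic input is that for $a,b\in\C$ with $|a|^{2}+|b|^{2}=1$ the vector $a\gamma+b\gamma'$ is again a standard Gaussian vector, hence $P(a\gamma+b\gamma')\sim P(\gamma)$, while multilinearity and symmetry of $M$ give the pointwise identity
\[
P(a\gamma+b\gamma')=\sum_{k=0}^{m}\binom{m}{k}a^{m-k}b^{k}M_k .
\]
Specializing to $a=\cos\theta$, $b=e^{i\phi}\sin\theta$ places the summand carrying $M_k$ in the $k$‑th Fourier frequency in the variable $\phi$, and both inequalities will come from integrating against the right character in $\phi$ and optimizing in $\theta$.

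For the upper bound I would extract the frequency $k=1$. Integrating the identity above against $e^{-i\phi}$ over $\phi\in[0,2\pi]$ gives
\[
M_1=\frac{1}{m\cos^{m-1}\theta\sin\theta}\int_{0}^{2\pi}P\bigl(\cos\theta\,\gamma+e^{i\phi}\sin\theta\,\gamma'\bigr)\,e^{-i\phi}\,\frac{d\phi}{2\pi},
\]
so Minkowski's integral inequality together with $P(\cos\theta\,\gamma+e^{i\phi}\sin\theta\,\gamma')\sim P(\gamma)$ yields $(\E\|M_1\|^{p})^{1/p}\le(m\cos^{m-1}\theta\sin\theta)^{-1}(\E\|P(\gamma)\|^{p})^{1/p}$ for every $\theta\in(0,\pi/2)$. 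Choosing $\theta$ with $\sin^{2}\theta=1/m$ maximizes $\cos^{m-1}\theta\sin\theta$, whose value is then $m^{-1/2}(1-1/m)^{(m-1)/2}\ge m^{-1/2}e^{-1/2}$, and one obtains $\sqrt m\,(\E\|M_1\|^{p})^{1/p}\le\sqrt e\,(\E\|P(\gamma)\|^{p})^{1/p}$.

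For the lower bound the same mechanism is used in reverse: I would produce $P(\gamma)$ as a Fourier mode of a family of $M_1$‑distributed objects. Since $\left(\begin{smallmatrix}a&b\\-\bar b&\bar a\end{smallmatrix}\right)$ is unitary, the pair $(a\gamma+b\gamma'\,,\,-\bar b\gamma+\bar a\gamma')$ has the same joint law as $(\gamma,\gamma')$; hence for $a=\cos\theta$, $b=e^{i\phi}\sin\theta$ the quantity
\[
h(\theta,\phi):=M\bigl(-e^{-i\phi}\sin\theta\,\gamma+\cos\theta\,\gamma'\,,\ \cos\theta\,\gamma+e^{i\phi}\sin\theta\,\gamma'\,,\ \ldots,\ \cos\theta\,\gamma+e^{i\phi}\sin\theta\,\gamma'\bigr)
\]
(first entry the complementary rotated coordinate, the remaining $m-1$ entries equal to $\cos\theta\,\gamma+e^{i\phi}\sin\theta\,\gamma'$) satisfies $h(\theta,\phi)\sim M(\gamma',\gamma,\ldots,\gamma)=M_1$ for every $\theta,\phi$. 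Expanding $h$ by multilinearity and sorting by powers of $e^{i\phi}$, the term carrying $M_k$ sits purely in frequency $k-1$; in particular $M_0=P(\gamma)$ occurs in frequency $-1$ with coefficient $-e^{-i\phi}\sin\theta\cos^{m-1}\theta$. Integrating $h(\theta,\phi)$ against $e^{i\phi}$ therefore isolates $P(\gamma)$, giving $P(\gamma)=-(\sin\theta\cos^{m-1}\theta)^{-1}\int_{0}^{2\pi}h(\theta,\phi)\,e^{i\phi}\,\frac{d\phi}{2\pi}$; Minkowski's inequality and $h(\theta,\phi)\sim M_1$ then give $(\E\|P(\gamma)\|^{p})^{1/p}\le(\sin\theta\cos^{m-1}\theta)^{-1}(\E\|M_1\|^{p})^{1/p}$, and the same choice $\sin^{2}\theta=1/m$ as before produces the constant $\sqrt e\sqrt m$.

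The steps that require care are, first, checking that the first slot of $h$ must carry the complementary rotated coordinate $-\bar b\gamma+\bar a\gamma'$ and not $\gamma'$ itself — only the former keeps the pair jointly distributed as $(\gamma,\gamma')$, which is exactly what makes $h(\theta,\phi)\sim M_1$ — and, second, the constant bookkeeping: both inequalities reduce to maximizing $\cos^{m-1}\theta\sin\theta$ and to the elementary estimate $(1-1/m)^{(m-1)/2}\ge e^{-1/2}$, which follows from the fact that $(1-1/m)^{m-1}$ decreases to $e^{-1}$. It is this $m$‑ and $n$‑independent bound that yields the clean constant $\sqrt e$. The remaining ingredients (the multilinear expansions, Minkowski's integral inequality, and the one‑variable calculus) are routine. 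I note that tetrahedrality of $P$ is not actually used here; it is assumed only to match the setting of~\eqref{decogauss} and of the rest of the section.
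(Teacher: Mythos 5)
Your proposal is correct, and for half of the statement it genuinely departs from the paper. The right-hand inequality is essentially the paper's argument in different clothing: the paper averages $P\bigl(\tfrac{w}{\sqrt{m-1}}\gamma'+\gamma\bigr)\overline{w}$ over a Steinhaus variable $w$, which is exactly your extraction of the frequency-one mode in $\phi$ (your choice $\sin^{2}\theta=1/m$ matches the paper's mixing weights after normalization), followed by rotation invariance of the gaussian and Jensen/Minkowski. The left-hand inequality, however, is proved differently in the paper: there one conditions on $\gamma''=\tfrac{1}{\sqrt m}\gamma'+\sqrt{\tfrac{m-1}{m}}\gamma$ and uses $\E[\gamma'_i\mid\gamma'']=\tfrac{1}{\sqrt m}\gamma''_i$, $\E[\gamma_i\mid\gamma'']=\sqrt{\tfrac{m-1}{m}}\gamma''_i$ together with Jensen, and it is precisely at the step $\E[M(\gamma',\gamma,\ldots,\gamma)\mid\gamma'']=\tfrac{1}{\sqrt m}\bigl(\sqrt{\tfrac{m-1}{m}}\bigr)^{m-1}P(\gamma'')$ that tetrahedrality is used, since the conditional expectation factors coordinatewise only when each variable appears in each monomial at most once. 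Your replacement — mixing $(\gamma,\gamma')$ by the $2\times 2$ unitary, observing $h(\theta,\phi)\sim M(\gamma',\gamma,\ldots,\gamma)$, isolating $P(\gamma)$ as the frequency $-1$ mode, and applying Minkowski — avoids conditioning altogether, and your closing remark is right: your proof never uses tetrahedrality, so it establishes the proposition for arbitrary $m$-homogeneous polynomials, a mild strengthening of the stated result, with the same constant $\sqrt e$ coming from $(1-1/m)^{(m-1)/2}\ge e^{-1/2}$. What the paper's conditional-expectation scheme buys instead is a template that survives the loss of full rotation invariance: it is the same mechanism that is adapted to Steinhaus variables in Lemma~\ref{le1decoz} and Theorem~\ref{1decoz} via the pair $w$ and $i\varepsilon w$, whereas your unitary-rotation trick exploits joint gaussian rotation invariance and does not transfer there.
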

\begin{proof}
For the first inequality consider the gaussian variable
\[\gamma''=\frac{1}{\sqrt{m}}\gamma'+\sqrt{\frac{m-1}{m}}\gamma\sim \gamma.\] A straightforward calculation shows that for every $1\le i\le n$ we have
\[\E[\gamma_i'|\gamma'']=\frac{1}{\sqrt{m}}\gamma''_i \quad \text{and} \quad \E[\gamma_i|\gamma'']=\sqrt{\frac{m-1}{m}}\gamma''_i.\]
Therefore, since $P$ is tetrahedral we get
\[\E[M(\gamma',\gamma,\ldots,\gamma)|\gamma'']=\frac{1}{\sqrt{m}}\left(\sqrt{\frac{m-1}{m}}\right)^{m-1}P(\gamma'').\]
By Jensen's inequality we deduce
\begin{align*}
(\E\|P(\gamma'')\|^p)^{1/p}&\leq \Big(1+\frac{1}{m-1}\Big)^{(m-1)/2} \sqrt{m} (\E\|M(\gamma',\gamma,\ldots,\gamma)\|^p)^{1/p}
\\ &\leq \sqrt{em} (\E\|M(\gamma',\gamma,\ldots,\gamma)\|^p)^{1/p}.
\end{align*}
Since $\gamma''\sim\gamma$, this proves the first inequality.

For the second one, notice that for a Steinhaus variable $w$ we have
\[\E_w\Big[P\Big(\frac{w}{\sqrt{m-1}}\gamma'+\gamma\Big)\overline{w}\Big]=m M\Big(\frac{1}{\sqrt{m-1}}\gamma',\gamma,\ldots,\gamma\Big).\]
Again, by Jensen's inequality we deduce
\[\sqrt m (\E\|M(\gamma',\gamma,\ldots,\gamma)\|^p)^{1/p}
\leq \sqrt{\frac{m-1}{m}} \E_w \Big(\E_{\gamma,\gamma'}\Big\|P\Big(\frac{w}{\sqrt{m-1}}\gamma'+\gamma\Big)\Big\|^p\Big)^{1/p}.\]
Similarly as before, for a fixed $w\in\T$ we have that
\[\frac{w}{\sqrt{m-1}}\gamma'+\gamma\sim \sqrt{\frac{m}{m-1}}\gamma.\]
Therefore we obtain
\begin{align*}
\sqrt m (\E\|M(\gamma',\gamma,\ldots,\gamma)\|^p)^{1/p}
&\leq \sqrt{\frac{m-1}{m}}  \Big(\E_{\gamma}\Big\|P\Big(\sqrt{\frac{m}{m-1}}\gamma\Big)\Big\|^p\Big)^{1/p}
\\ &= \Big(1+\frac{1}{m-1}\Big)^{(m-1)/2}  (\E\|P(\gamma)\|^p)^{1/p}
\\ &\le \sqrt{e}(\E\|P(\gamma)\|^p)^{1/p}
\end{align*}
which completes the argument.
\end{proof}

Unlike in the previous section, translating the last result to other random variables is not possible since our comparison estimates (such as Theorem~\ref{compara}) involve constants of the form $C^m$ (and we aim at constants independent of $m$). However, we can recover an analogous decoupling inequality for Steinhaus polynomials assuming the Banach space is $K$-convex. Recall that a Banach space is $K$-convex if  the 1-homogeneous (Rademacher) projection $R$ is bounded on $L^2(\{-1,1\}^{\infty},X)$. We use that $K$-convexity is equivalent to the boundedness of the Rademacher projection $R$ on $L^p(\{-1,1\}^{\infty},X)$ for some (every) $1<p<\infty$ (see \cite[Lemma~7.4.3]{Hyto}). In this case, we denote the norm of $R$ by $K_p(X)$.

\begin{theorem}\label{1decoz}
Let $P:\C^n\rightarrow X$ be an $m$-homogeneous tetrahedral polynomial.
For every $1\leq p<\infty$ we have
	\begin{align}
	\label{ec1decoz}
	(\E\|P(w)\|^p)^{1/p}
	\leq \frac{\pi}{2} \sqrt{e m} (\E\|M(w',w,\ldots,w)\|^p)^{1/p}.
	\end{align}
If the Banach space $X$ is $K$-convex and $p>1$ we also have
\begin{align*}
\sqrt m (\E\|M(w',w,\ldots,w)\|^p)^{1/p}
	\leq \frac{\pi}{2} \sqrt{e} K_p(X)(\E\|P(w)\|^p)^{1/p}.
\end{align*}
\end{theorem}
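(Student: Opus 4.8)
The plan is to prove the two inequalities of Theorem~\ref{1decoz} separately, mimicking the structure of the proof of Proposition~\ref{1decogauss} but replacing the gaussian-specific conditioning arguments by averaging identities valid for Steinhaus variables, and then using the contraction principle (Theorem~\ref{contr}) to absorb the complex factors $\pi/2$ and $K$-convexity to control a Rademacher-type projection.

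For the first inequality~\eqref{ec1decoz}, the starting observation is a polarization-type identity: for an auxiliary independent Steinhaus variable $u\in\T$ one has
\[
\E_u\Big[u\,P\Big(\tfrac{u}{\sqrt{m}}\,w' + \sqrt{\tfrac{m-1}{m}}\,w\Big)\Big] = \sqrt{m}\cdot\tfrac{1}{\sqrt{m}}\Big(\sqrt{\tfrac{m-1}{m}}\Big)^{m-1} \cdot \sqrt{m}\, M(w',w,\ldots,w),
\]
or more simply $\E_u[\bar u\,P(u\,a\,w'+b\,w)] = a\,m\,b^{m-1} M(w',w,\ldots,w)$ for scalars $a,b$, using that $P$ is tetrahedral so only the term linear in the first "new" variable survives the integration against $\bar u$. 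Choosing $a,b$ so that $a\,w'+b\,w$ has the same distribution as a rescaled copy of $w$ (e.g.\ keeping $|a|^2$ small and $b$ close to $1$, then invoking Theorem~\ref{poiss} to trade the rescaling for a $\sqrt e$-type constant), and applying Jensen's inequality to pull the $p$-norm inside $\E_u$, gives $(\E\|P(w)\|^p)^{1/p} \le C\sqrt{m}\,(\E\|M(w',w,\ldots,w)\|^p)^{1/p}$. The factor $\pi/2$ enters precisely when one must re-randomize $w'$: since $u\,w'$ differs from $w'$ by a rotation one can replace $u$ by a fresh Steinhaus variable absorbed via the complex contraction principle, or more directly the $\pi/2$ comes from relating a complex-coefficient Rademacher/Steinhaus sum to a real one. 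The $\sqrt{em}$ combines the $\sqrt m$ from the multilinearity, and a $\sqrt e$ from a $(1+\tfrac1{m-1})^{(m-1)/2}$ factor produced by the rescaling in Theorem~\ref{poiss}.

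For the reverse inequality (requiring $K$-convexity), the idea is dual: express $M(w',w,\ldots,w)$ as a Rademacher/Steinhaus projection applied to $P$ evaluated at a combination of $w'$ and $w$. Concretely, with an auxiliary Steinhaus $u$,
\[
\E_u\Big[\bar u\, P\Big(\tfrac{u}{\sqrt{m-1}}\,w' + w\Big)\Big] = \sqrt{m}\cdot\sqrt{\tfrac{1}{m-1}}\cdot\text{(something)}\, ,
\]
and for fixed $u\in\T$ the argument $\tfrac{u}{\sqrt{m-1}}w' + w$ is distributed like $\sqrt{\tfrac{m}{m-1}}\,w$. So after Jensen and Theorem~\ref{poiss} one gets $\sqrt m\,(\E\|M(w',w,\ldots,w)\|^p)^{1/p}$ bounded by a $\sqrt e$-multiple of $(\E\|P(\text{rescaled }w)\|^p)^{1/p}$, \emph{except} that extracting the coefficient of $u^1$ from $P(u\,a\,w'+w)$ really amounts to applying the degree-$1$ (Rademacher) projection $R$ in the $u$-variable to the random function $u\mapsto P(u\,a\,w'+w)$; this is a bona fide operator only bounded on $L^p$ when $X$ is $K$-convex, which is where $K_p(X)$ and the restriction $p>1$ appear. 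The complex $\pi/2$ again arises from passing between a genuine complex Steinhaus variable $u$ and the real Rademacher projection that $K$-convexity controls.

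The main obstacle I anticipate is handling the coefficient-extraction in the second inequality cleanly: one must set up the identity so that the operator being applied is exactly (a rescaling of) the Rademacher projection in a single auxiliary variable, applied to a function whose $L^p(X)$-norm over that variable equals $(\E\|P(\text{rescaled }w)\|^p)^{1/p}$ — and this requires carefully choosing whether to use a Steinhaus or a Rademacher auxiliary variable, tracking the $\pi/2$ losses from the complex contraction principle at each re-randomization step, and checking that the rescaling factor is precisely $\sqrt{m/(m-1)}$ so that Theorem~\ref{poiss} yields the clean $(1+\tfrac1{m-1})^{(m-1)/2}\le\sqrt e$ bound rather than something worse. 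The first inequality is more routine since no projection is needed — only the polarization identity, a distributional identity for the combination of Steinhaus variables, Jensen, and Theorem~\ref{poiss}.
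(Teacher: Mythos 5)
Your plan breaks down on a distributional fact that is specific to gaussians and fails for Steinhaus variables: for independent Steinhaus vectors $w,w'$ and scalars $a,b\neq 0$, the combination $a w'+b w$ is \emph{not} distributed like a rescaled Steinhaus vector, since the modulus $|a w_j'+b w_j|$ is genuinely random rather than a.s.\ equal to $\sqrt{|a|^2+|b|^2}$. Both halves of your sketch use exactly this: for the first inequality you want to choose $a,b$ so that $aw'+bw$ is a rescaled copy of $w$, and for the second you assert that $\tfrac{u}{\sqrt{m-1}}w'+w\sim\sqrt{\tfrac{m}{m-1}}\,w$ for fixed $u\in\T$. This is the very point the remark after the theorem highlights, and the paper's whole proof is the workaround: instead of an independent copy $w'$ it perturbs $w$ by $i\varepsilon w$, because $\sqrt{m-1}\,w_j+i\varepsilon_j w_j=(\sqrt{m-1}+i\varepsilon_j)w_j$ has constant modulus $\sqrt{m}$ and hence \emph{is} distributed like $\sqrt{m}\,w_j$; the independent $w'$ is only reinstated afterwards through \eqref{zvse}, which is where the $\pi/2$ actually comes from. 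Trying to salvage your route by estimating $\E\|P(aw'+bw)\|^p$ through the maximal modulus $|a|+|b|$ costs a factor of order $(1+\tfrac{1}{\sqrt{m-1}})^{m}\sim e^{\sqrt m}$, which is not a constant independent of $m$, so the missing distributional identity cannot simply be relaxed.

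There are two further structural problems. First, your polarization identity $\E_u[\bar u\,P(u a w'+b w)]=a\,m\,b^{m-1}M(w',w,\ldots,w)$ writes $M$ as an average of values of $P$, so Jensen can only give an upper bound for the norm of $M$ in terms of that of $P$; it is relevant to the \emph{second} inequality and cannot produce \eqref{ec1decoz}, which needs the opposite direction. For that one must exhibit $P$ as a conditional expectation of the one-variable decoupled object, which is what Lemma~\ref{le1decoz} does by computing $\E[w_j\,|\,w']$ and $\E[i\varepsilon_j w_j\,|\,w']$ for $w_j'=\frac{\sqrt{m-1}+i\varepsilon_j}{\sqrt m}w_j$; no step of this kind appears in your sketch. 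Second, the role of $K$-convexity is misplaced: extracting the coefficient of $u$ from a polynomial in a single auxiliary Steinhaus variable is just $\E_u[\bar u\,\cdot\,]$, hence contractive on $L^p(X)$ for every Banach space, so your version of the argument would never need $K$-convexity — a strong sign it cannot work as stated. In the actual proof, $K_p(X)$ enters because the degree-one projection is taken jointly in the $n$ Rademacher variables $\varepsilon_1,\ldots,\varepsilon_n$ inside $P(\sqrt{m-1}\,w+i\varepsilon w)$, i.e.\ it is the genuine Rademacher projection on $L^p(\{-1,1\}^n,X)$, bounded precisely when $X$ is $K$-convex.
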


\begin{remark}
The key fact in the proof of Proposition~\ref{1decogauss} is that for gaussian variables $a\gamma+b\gamma'\sim \sqrt{|a|^2+|b|^2}\gamma''$ for every $a,b\in \C$. This is no longer true for other random variables such as Steinhaus variables. To obtain a similar behavior we take advantage of the geometry in $\C$ and use the variables $w$ and $i\varepsilon w$ which always stay orthogonal to each other. Notice that in this case for $a,b\in \R$ we have that $aw+bi\varepsilon w\sim \sqrt{a^2+b^2}w$. This explains the $K$-convexity requirement which arises from estimating the norm of homogeneous projections involving Rademacher variables  we are forced to use in order to maintain the orthogonality between $w$ and $i\varepsilon w$. We do not know if the $K$-convextity assumption is necessary. It is worth mentioning that one can retrieve the full-decoupling inequality~\eqref{decoupsim} for Steinhaus variables by starting with the previous result and proceeding inductively. Therefore, weakening the $K$-convextity condition to assuming only finite cotype would provide a new proof of Corollary~\ref{decoup} for Steinhaus variables and possibly lead to better estimates.
\end{remark}

We begin by showing a slightly more general fact than~\eqref{ec1decoz} which will later lead to an interesting application.

\begin{lemma}\label{le1decoz}
Let $P:\C^n\rightarrow X$ be an $m$-homogeneous tetrahedral polynomial and fix $\lambda\in \C^n$.
For every $1\leq p<\infty$ we have
	\begin{align*}
	(\E_{w}\|M(\lambda w,w,\ldots,w)\|^p)^{1/p}
	\leq \sqrt{e m} (\E_{\varepsilon,w}\|M(\lambda\varepsilon w,w,\ldots,w)\|^p)^{1/p}.
	\end{align*}
\end{lemma}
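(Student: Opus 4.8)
The plan is to exploit the fact that the map $M(\lambda\,\cdot\,,w,\ldots,w)$ is linear in its first slot, so that $M(\lambda w,w,\ldots,w)$ is a Steinhaus sum of the vectors $M(\lambda e_j,w,\ldots,w)$ in the variable $w_j$, and then to use a contraction/Khinchin-type argument to pass from the Steinhaus variable $w_j$ to the product $\varepsilon_j w_j$ without loss. More precisely, first I would freeze the "bulk" variables and isolate the first-slot dependence. Write $M$ in its tetrahedral expansion; since $P$ is tetrahedral the index $j$ appearing in the first slot never reappears in the remaining $m-1$ slots. Conditioning on the copy of $w$ used in slots $2,\ldots,m$, the quantity $M(\lambda w, w,\ldots,w)$ becomes $\sum_{j=1}^n w_j\,y_j$ where $y_j = \lambda_j\, M(e_j, w,\ldots,w) \in X$ depends only on $(w_k)_{k\neq j}$, but — and this is the point — because $P$ is tetrahedral, $y_j$ does not depend on $w_j$, so the sum $\sum_j w_j y_j$ is a genuine Steinhaus sum with coefficients independent of the Steinhaus signs.

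Next I would replace each $w_j$ by $\varepsilon_j w_j$. The left side $\E_w\|\sum_j w_j y_j\|^p$ and the right side $\E_{\varepsilon,w}\|\sum_j \varepsilon_j w_j y_j\|^p$ differ only by a reweighting of an unconditional random sum. The natural tool is the polynomial Kahane–Khinchin inequality (Theorem~\ref{poiss}), which controls the $q$-norm of a contracted polynomial by the $p$-norm of the uncontracted one; here the relevant "contraction" is by the factor $\sqrt{p/q}$, and one checks that the decoupling in one slot of an $m$-homogeneous object costs exactly a factor $\sqrt{m}$ up to the universal constant $\sqrt{e}$, just as in Proposition~\ref{1decogauss}. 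Concretely I would run the same two-step argument as in the proof of Proposition~\ref{1decogauss}: use a Steinhaus variable (call it $u$) to average and recover $M$ in the first slot via $\E_u[P(\tfrac{u}{\sqrt{m-1}}(\lambda\star w') + w)\,\bar u] = m\,M(\tfrac{1}{\sqrt{m-1}}\lambda\star w', w,\ldots,w)$, apply Jensen, and then observe that for fixed $u$ the random vector $\tfrac{u}{\sqrt{m-1}}(\lambda\star\varepsilon w) + w$ has the same distribution as before, the orthogonality of $w_j$ and $i\varepsilon_j w_j$ (as highlighted in the Remark) being what makes the Gaussian-style computation go through for Steinhaus variables. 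The factor $(1+\tfrac{1}{m-1})^{(m-1)/2}\le\sqrt e$ produces the constant $\sqrt{em}$.

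The main obstacle I expect is bookkeeping the role of $\lambda$ and making sure the tetrahedral hypothesis is used correctly: one must verify that when the first slot carries the weights $\lambda_j$, decoupling that slot against $\varepsilon$ does not require $K$-convexity — unlike the full statement of Theorem~\ref{1decoz}, here we are only asking for the "easy" direction (Steinhaus-to-$\varepsilon$-Steinhaus, i.e. introducing independence), which goes through the contraction principle (Theorem~\ref{contr}) and Theorem~\ref{poiss} without any geometric assumption on $X$. A secondary technical point is that $\lambda$ is complex, so when I realize $M(\lambda\varepsilon w, w,\ldots,w)$ I should think of $\varepsilon$ as acting coordinatewise on the first slot only; the orthogonality $w_j\perp i\varepsilon_j w_j$ in $L^2(\T\times\{-1,1\})$ is the substitute for the Gaussian rotation invariance, and keeping the phases straight there is where an error would most easily creep in.
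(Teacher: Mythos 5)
Your proposal has a genuine gap, and it appears already in the first step. There is no separate copy of $w$ in slots $2,\ldots,m$ to condition on: writing $M(\lambda w,w,\ldots,w)=\sum_{j=1}^n w_j y_j$ with $y_j=\lambda_j M(e_j,w,\ldots,w)$, tetrahedrality only guarantees that $y_j$ is free of $w_j$, but each $y_j$ still involves all the other coordinates $w_k$, $k\neq j$, of the \emph{same} vector $w$ --- that is, the coefficients are functions of the very signs you want to randomize. So this is not ``a genuine Steinhaus sum with coefficients independent of the Steinhaus signs'', and neither the contraction principle nor Theorem~\ref{poiss} lets you replace $w_j$ by $\varepsilon_j w_j$ without loss. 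Indeed, if that replacement were free, symmetry would give the lemma with a constant independent of $m$, which is false: for $P(z)=z_1\cdots z_m$, $\lambda=(1,\ldots,1)$, $X=\C$ one has $|M(\lambda w,w,\ldots,w)|\equiv 1$, while
\begin{equation*}
M(\lambda\varepsilon w,w,\ldots,w)=\frac{w_1\cdots w_m}{m}\sum_{j=1}^m \varepsilon_j ,
\qquad
\big(\E_{\varepsilon,w}|M(\lambda\varepsilon w,w,\ldots,w)|^p\big)^{1/p}\simeq \frac{1}{\sqrt m},
\end{equation*}
so the factor $\sqrt m$ is unavoidable and no unconditionality/Khinchin argument in that direction can produce the estimate.

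Your concrete fallback then runs the wrong half of Proposition~\ref{1decogauss}. The identity $\E_u\big[P\big(\tfrac{u}{\sqrt{m-1}}\lambda w'+w\big)\bar u\big]=m\,M\big(\tfrac{1}{\sqrt{m-1}}\lambda w',w,\ldots,w\big)$ expresses the (partially) decoupled object as an average of values of $P$, so after Jensen it bounds $M(\lambda\varepsilon w,w,\ldots,w)$ \emph{from above} by a coupled quantity --- the converse direction, the one that in Theorem~\ref{1decoz} costs $K$-convexity --- not the inequality you must prove; moreover, for fixed $u\in\T$ the vector $\tfrac{u}{\sqrt{m-1}}\lambda\varepsilon w+w$ is in general not a dilate of a Steinhaus vector, since the moduli $\big|1+\tfrac{u\lambda_j\varepsilon_j}{\sqrt{m-1}}\big|$ are random unless $u\lambda_j$ is purely imaginary. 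The step your proposal is missing --- and the essential idea of the actual proof --- is the analogue of the \emph{first} inequality of Proposition~\ref{1decogauss}: set $w'_j=\tfrac{\sqrt{m-1}+i\varepsilon_j}{\sqrt m}\,w_j$ (so that $w'\sim w$), check that $\E[w_j\mid w']=\sqrt{\tfrac{m-1}{m}}\,w'_j$ and $\E[i\varepsilon_j w_j\mid w']=\tfrac{1}{\sqrt m}\,w'_j$, use tetrahedrality and multilinearity to obtain
\begin{equation*}
\E\big[M(\lambda i\varepsilon w,w,\ldots,w)\mid w'\big]
=\frac{1}{\sqrt m}\Big(\frac{m-1}{m}\Big)^{(m-1)/2}M(\lambda w',w',\ldots,w'),
\end{equation*}
apply Jensen, and finally remove the harmless factor $i$ from the first slot by linearity. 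Without this conditional-expectation construction your argument does not close.
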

\begin{proof}
Similarly as in Proposition~\ref{1decogauss}, for $1\le j\le n$ consider the random variables
\[w_j'=\frac{\sqrt{m-1}+i\varepsilon_j}{\sqrt{m}}w_j.\]
An easy calculation using rotation invariance shows that $w'\sim w$.
Notice that
\begin{align*}
\E[w_j|w']&=\E\Big[\frac{\sqrt{m}}{\sqrt{m-1}+i}w_j'\rchi_{\{\varepsilon_j=1\}}+\frac{\sqrt{m}}{\sqrt{m-1}-i}w_j'\rchi_{\{\varepsilon_j=-1\}} \Big|w'\Big]
\\ &=\frac{\sqrt{m}}{\sqrt{m-1}+i}w_j'\pp(\varepsilon_j=1|w')
+\frac{\sqrt{m}}{\sqrt{m-1}-i}w_j'\pp(\varepsilon_j=-1|w')
\\ &=\frac{1}{2} \Big(\frac{1}{\sqrt{m-1}+i}+
\frac{1}{\sqrt{m-1}-i}\Big)\sqrt{m}w_j'=\sqrt{\frac{m-1}{m}}w_j'.
\end{align*}
Therefore we also get
\begin{align*}
\E[i\varepsilon_j w_j|w']&=\E[\sqrt{m}w_j'-\sqrt{m-1}w_j |w']
=\sqrt{m}w_j'-\frac{m-1}{\sqrt{m}}w_j'=\frac{1}{\sqrt{m}}w_j'.
\end{align*}
Proceeding as in Proposition~\ref{1decogauss}, since $P$ is tetrahedral we get
\[\E[M(\lambda i\varepsilon w,w,\ldots,w)|w']=\frac{1}{\sqrt{m}}\sqrt{\frac{m-1}{m}}^{m-1}M(\lambda w',w',\ldots,w').\]
By Jensen's inequality we deduce
\begin{multline*}
(\E_w\|M(\lambda w,w,\ldots,w)\|^p)^{1/p}=(\E_{w'}\|M(\lambda w',w',\ldots,w')\|^p)^{1/p}
\\ \leq \Big(1+\frac{1}{m-1}\Big)^{(m-1)/2} \sqrt{m} (\E_{\varepsilon,w}\|M(\lambda i\varepsilon w,w,\ldots,w)\|^p)^{1/p}.
\end{multline*}
Finally, since $M$ is multilinear we can take $i$ out from the first coordinate which completes the proof.
\end{proof}

\begin{proof}[Proof of Theorem~\ref{1decoz}]
Applying the previous lemma for $\lambda =(1,\ldots,1)\in\C^n$ and~\eqref{zvse} we get
\begin{align*}
(\E_w\|P(w)\|^p)^{1/p}
	&\leq \sqrt{em} (\E_{\varepsilon,w}\| M(\varepsilon w,w,\ldots,w)\|^p)^{1/p}
	\\ &=\sqrt{em} \Big(\E_{\varepsilon,w}\Big\| \sum_{j=1}^n \varepsilon_j w_j M(e_j ,w,\ldots,w)\Big\|^p\Big)^{1/p}
	\\ &\le \frac{\pi}{2} \sqrt{e m} \Big(\E_{w,w'}\Big\| \sum_{j=1}^n w'_j w_j M(e_j ,w,\ldots,w)\Big\|^p\Big)^{1/p}
	\\ &= \frac{\pi}{2} \sqrt{e m}(\E_{w,w'}\| M(w'w,w,\ldots,w)\|^p)^{1/p}.
\end{align*}
By rotation invariance we can replace $w'w$ with $w'$ which yields~\eqref{ec1decoz}.

Now assume $X$ is $K$-convex and $p>1$. Using rotation invariance and~\eqref{zvse} in the other direction we get
\begin{align}\label{eq20}
(\E_{w,w'}\|M(w',w,\ldots,w)\|^p)^{1/p}
	\leq \frac{\pi}{2} (\E_{\varepsilon,w}\| M(\varepsilon w,w,\ldots,w)\|^p)^{1/p} .
\end{align}

As in the proof of the previous lemma, consider the Steinhaus variables
\[w_j'=\frac{\sqrt{m-1}+i\varepsilon_j}{\sqrt{m}}w_j.\]
For a fixed $z\in\T$ regard $P(\sqrt{m-1}z+i\varepsilon z)$ as a Walsh polynomial on $\varepsilon$. Notice that its $1$-homogeneous projection is $m M(i\varepsilon z,\sqrt{m-1}z,\ldots,\sqrt{m-1}z)$. Since $X$ is $K$-convex we deduce
\begin{multline*}
(\E_{\varepsilon,w}\|m M(i\varepsilon w,\sqrt{m-1}w,\ldots,\sqrt{m-1}w)\|^p)^{1/p}\le
\\\le K_p(X) (\E\|P(\sqrt{m-1}w+i\varepsilon w)\|^p)^{1/p} \notag
= \sqrt{m}^m K_p(X)(\E_{w'}\|P(w')\|^p)^{1/p}.
\end{multline*}
So we get
\begin{align}\label{eq21}
\sqrt{m} (\E_{\varepsilon,w}&\| M(\varepsilon w,w,\ldots,w)\|^p)^{1/p}= \notag
	\\ &= \frac{1}{\sqrt{m}\sqrt{m-1}^{m-1}}  (\E_{\varepsilon,w}\|m M(i\varepsilon w,\sqrt{m-1}w,\ldots,\sqrt{m-1}w)\|^p)^{1/p}\notag
	\\ &\le  \Big(1+\frac{1}{m-1}\Big)^{(m-1)/2} K_p(X)
	(\E_{w'}\|P(w')\|^p)^{1/p}\notag
	\\ &=  \sqrt{e}K_p(X)
	(\E_{w}\|P(w)\|^p)^{1/p}.
\end{align}
Joining~\eqref{eq20} and~\eqref{eq21} concludes the argument.
\end{proof}

As an application of the previous results we have the following corollary (see \cite{harris,DeMa20} for similar inequalities and their applications).

\begin{corollary}\label{cor1deco}
Let $X$ be a $K$-convex space, $P:\C^n\rightarrow X$ an $m$-homogeneous tetrahedral polynomial. For every $1\leq p<\infty$ and every $\lambda \in \C^n$ we have
\begin{align*}
(\E\|\langle \nabla P(w),\lambda w\rangle\|^p)^{1/p}
\leq \frac{\pi e}{2} K_p(X)m\|\lambda \|_\infty(\E\|P(w)\|^p)^{1/p}.
\end{align*}
\end{corollary}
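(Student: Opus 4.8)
The plan is to first rewrite the left-hand side in multilinear terms and then run the three-step comparison that was already assembled in the proof of Theorem~\ref{1decoz}. Writing $M$ for the symmetric $m$-linear operator associated to $P$ and using the (bilinear) identity $\langle \nabla P(z),v\rangle = m\,M(v,z,\ldots,z)$ — obtained by differentiating $P(z)=M(z,\ldots,z)$ and invoking symmetry — with $z=w$ and $v=\lambda w$ (pointwise product), one gets
\[
\langle \nabla P(w),\lambda w\rangle = m\,M(\lambda w,w,\ldots,w).
\]
Hence the corollary amounts to bounding $m\,(\E\|M(\lambda w,w,\ldots,w)\|^p)^{1/p}$ by $\tfrac{\pi e}{2}K_p(X)\,m\,\|\lambda\|_\infty(\E\|P(w)\|^p)^{1/p}$; cancelling the $m$ that appears on both sides, it suffices to estimate the $p$-norm of $M(\lambda w,w,\ldots,w)$ itself.

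Next I would chain the following three inequalities. First, Lemma~\ref{le1decoz} applied to this $\lambda$ gives
\[
(\E_w\|M(\lambda w,w,\ldots,w)\|^p)^{1/p}\le \sqrt{em}\,(\E_{\varepsilon,w}\|M(\lambda\varepsilon w,w,\ldots,w)\|^p)^{1/p}.
\]
Second, for each fixed $w$ expand $M(\lambda\varepsilon w,w,\ldots,w)=\sum_{j=1}^n \varepsilon_j\lambda_j\big(w_j\,M(e_j,w,\ldots,w)\big)$ and apply the complex contraction principle (Theorem~\ref{contr}) to the vectors $x_j=w_j\,M(e_j,w,\ldots,w)\in X$; integrating in $w$ afterwards yields
\[
(\E_{\varepsilon,w}\|M(\lambda\varepsilon w,w,\ldots,w)\|^p)^{1/p}\le \tfrac{\pi}{2}\,\|\lambda\|_\infty\,(\E_{\varepsilon,w}\|M(\varepsilon w,w,\ldots,w)\|^p)^{1/p}.
\]
Third, the $K$-convexity estimate~\eqref{eq21} already established inside the proof of Theorem~\ref{1decoz} says
\[
\sqrt m\,(\E_{\varepsilon,w}\|M(\varepsilon w,w,\ldots,w)\|^p)^{1/p}\le \sqrt e\,K_p(X)\,(\E_w\|P(w)\|^p)^{1/p}.
\]
Multiplying the three displays (and noting that the $m$-dependent numerical factors collapse as $m\cdot\sqrt{em}\cdot m^{-1/2}\cdot\sqrt e = me$) gives
\[
m\,(\E\|M(\lambda w,w,\ldots,w)\|^p)^{1/p}\le \tfrac{\pi e}{2}\,K_p(X)\,m\,\|\lambda\|_\infty\,(\E\|P(w)\|^p)^{1/p},
\]
which is exactly the assertion once one translates back via $\langle\nabla P(w),\lambda w\rangle=m\,M(\lambda w,w,\ldots,w)$.

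I expect this to be essentially bookkeeping: all the substantive work is already behind Lemma~\ref{le1decoz} (the auxiliary Steinhaus variables $w_j'=\tfrac{\sqrt{m-1}+i\varepsilon_j}{\sqrt m}w_j$ and the conditional-expectation computation) and behind~\eqref{eq21} (the Rademacher-projection argument), so nothing new is needed. The two small points to watch are that the contraction principle is stated for deterministic vectors and therefore must be applied conditionally on $w$ before integrating, and that one must carry the factors $\tfrac{\pi}{2}$, $\sqrt e$, $\sqrt m$ faithfully to land precisely on $\tfrac{\pi e}{2}K_p(X)m\|\lambda\|_\infty$. Finally, since~\eqref{eq21} relies on the boundedness of the Rademacher projection on $L^p$, the clean constant is genuinely obtained for $1<p<\infty$ (the $p=1$ endpoint, where $K_p(X)$ is not defined, should be read with that proviso).
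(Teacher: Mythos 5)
Your proof is correct and follows essentially the same route as the paper: the identity $\langle \nabla P(z),\lambda z\rangle=m\,M(\lambda z,z,\ldots,z)$, then Lemma~\ref{le1decoz}, the complex contraction principle to extract $\|\lambda\|_\infty$ with the factor $\tfrac{\pi}{2}$, and finally the $K$-convexity estimate~\eqref{eq21}, with the same constant bookkeeping. Your closing remark about the $p=1$ endpoint (where $K_p(X)$ is only defined for $p>1$) is a fair observation and does not affect the argument for $1<p<\infty$.
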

\begin{proof}
Notice that if $P(z)=\sum_A x_A z_A$ we can rewrite
\[\langle \nabla P(z),\lambda z\rangle= \sum_{j=1}^n \lambda_j z_j \sum_{A / j\in A} x_A z_{A-\{j\}}
=\sum_{A} \sum_{j\in A} \lambda_j   x_A z_{A}=m M(\lambda z,z,\ldots,z).\]
Applying Lemma~\eqref{le1decoz} and~\eqref{zvse} we get
\begin{align*}
(\E\|\langle \nabla P(w),\lambda w\rangle\|^p)^{1/p}
&\le \sqrt{e}m^{3/2}(\E_{\varepsilon,w}\|M(\lambda \varepsilon w,w,\ldots,w)\|^p)^{1/p}
\\ &\le \frac{\pi}{2}\sqrt{e} m^{3/2}\|\lambda \|_\infty(\E_{\varepsilon,w}\|M(\varepsilon w,w,\ldots,w)\|^p)^{1/p}.
\end{align*}
The result follows using~\eqref{eq21} from the previous theorem.
\end{proof}

To illustrate how we may use the last corollary, suppose we want to estimate $\E_w\|P(e^{i\theta} w)-P(w)\|^p$ for a small perturbation $e^{i\theta}=(e^{i\theta_j})_{j} \in\T^n$ where every coordinate is close to $1$. For $z\in\C^n$, consider the function $f_z(t)=P(e^{i\theta t}z)$. Observe that
\[f_z'(t)=\langle \nabla P(e^{i\theta t}z),i\theta e^{i\theta t}z\rangle.\]
So under the assumptions of Corollary~\ref{cor1deco} we get
\begin{multline*}
(\E_w\|P(e^{i\theta} w)-P(w)\|^p)^{1/p} = (\E_w\|f_w(1)-f_w(0)\|^p)^{1/p}
\leq (\E_w\|f'_w(t_0)\|^p)^{1/p}
\\ = (\E\|\langle \nabla P(w),i \theta w\rangle\|^p)^{1/p}
\le \frac{\pi e}{2} K_p(X)m\|\theta\|_\infty(\E\|P(w)\|^p)^{1/p}.
\end{multline*}
Of course, this estimate is only useful when $\|\theta\|_\infty$ is smaller than $1/m$. Otherwise the triangle inequality together with rotation invariance already ensures that
\[(\E_w\|P(e^{i\theta} w)-P(w)\|^p)^{1/p} \le 2(\E\|P(w)\|^p)^{1/p}.\]

Unfortunately, we do not know sharp one-variable decoupling estimates for other random variables. We finish this note by discussing one-variable decoupling for Walsh polynomials and provide some partial results.

A famous inequality of Pisier (see \cite[Lemma~7.3]{pisier}) states that for every $f:\{-1,1\}^n\rightarrow X$ and $1\le p <\infty$ we have
\begin{align}\label{pisin}
(\E_\varepsilon \|f(\varepsilon)-\E f\|^p)^{1/p}
\le 2e \log n (\E_{\varepsilon,\varepsilon'} \|\langle \nabla f(\varepsilon),\varepsilon'\rangle\|^p)^{1/p}.
\end{align}
This was used by Pisier to study a non-linear version of type for metric spaces known as Enflo type. He proved that for Banach spaces both notions almost coincide. More precisely, it is easy to check that Enflo type $p$ implies type $p$ for Banach spaces (see for example \cite{introribe}). Conversely, in \cite[Theorem~7.5]{pisier} it is shown that Banach spaces with type $p>1$ enjoy Enflo type $r$ for every $1\leq r < p$. The $\log n$ term in~\eqref{pisin} was the reason why one could only deduce Enflo type $r$ instead of Enflo type $p$. So the question of whether the $\log n$ factor could be removed for Banach spaces of non-trivial type ($K$-convex spaces) became a long-standing open problem. Quite recently in \cite{paa} it was proven that this is true even for spaces with finite cotype and the coincidence of type and Enflo type for Banach spaces was settled.

In the case of $m$-homogenous polynomials we show that for spaces $X$ of cotype $q<\infty$ and for $1\le p\le q$ we have
\begin{align*}
(\E \|P(\varepsilon)\|^p)^{1/p}
\le C m^{-1/q} (\E \|\langle \nabla P(\varepsilon),\varepsilon'\rangle\|^p)^{1/p}.
\end{align*}
Notice that since $P$ is $m$-homogenous we have $\E P=0$ so this inequality is a variant of~\eqref{pisin} for $m$-homogeneous polynomials. Also a straightforward computation shows that
\begin{align}
\label{ecgrad}
\langle\nabla P(\varepsilon),\varepsilon'\rangle =m M(\varepsilon',\varepsilon,\ldots,\varepsilon).
\end{align}
So in our notation the previous inequality can be stated as follows.

\begin{proposition}\label{casi}
 Let $X$ be a Banach space of finite cotype $q$ and fix $1\le p\le q$. There is a constant $C\ge 1$ such that for every $m$-homogenous polynomial $P:\{1,1\}^n\rightarrow X$ we have
\begin{align*}
(\E \|P(\varepsilon)\|^p)^{1/p}
\le C m^{1-1/q} (\E \|M(\varepsilon',\varepsilon,\ldots,\varepsilon)\|^p)^{1/p}.
\end{align*}
\end{proposition}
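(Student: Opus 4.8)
The plan is to reduce to a decoupling statement about the gradient, realize $P$ as a martingale difference sum, decouple the associated tangent sequence, and then exploit the $m$-homogeneity together with cotype $q$ to control the resulting average; cotype is what is responsible for the exponent $1-\tfrac1q$ (and for removing the $\log n$ that a direct appeal to~\eqref{pisin} would cost).

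First the reduction. By~\eqref{ecgrad} we have $\langle\nabla P(\varepsilon),\varepsilon'\rangle=m\,M(\varepsilon',\varepsilon,\ldots,\varepsilon)$, while Euler's identity for the $m$-homogeneous $P$ (using $\varepsilon_j^2=1$) gives $\langle\nabla P(\varepsilon),\varepsilon\rangle=m\,P(\varepsilon)$, and $\E P=0$. Thus it suffices to prove
\[
\big(\E\|\langle\nabla P(\varepsilon),\varepsilon\rangle\|^p\big)^{1/p}\le C\,m^{1-1/q}\big(\E\|\langle\nabla P(\varepsilon),\varepsilon'\rangle\|^p\big)^{1/p}.
\]
Write $\partial_jP(\varepsilon)=\sum_{B\ni j}x_B\varepsilon_{B\setminus j}$, which does not involve $\varepsilon_j$. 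Fix an ordering $\sigma$ of $[n]$ and expand $P(\varepsilon)=\sum_{j}d_j^\sigma$ along the filtration generated by $\varepsilon_{\sigma(1)},\varepsilon_{\sigma(2)},\ldots$; the martingale differences are $d_j^\sigma=\varepsilon_{\sigma(j)}\,g_j^\sigma$ with $g_j^\sigma=\E\!\big[\partial_{\sigma(j)}P(\varepsilon)\mid\varepsilon_{\sigma(1)},\ldots,\varepsilon_{\sigma(j-1)}\big]$, i.e.\ $g_j^\sigma$ retains exactly those monomials $x_B\varepsilon_{B\setminus\sigma(j)}$ for which $\sigma(j)$ is the $\sigma$-largest index of $B$ (so $d_j^\sigma=0$ for $j<m$). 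Since $(d_j^\sigma)_j$ is conditionally symmetric and $\tilde d_j^\sigma:=\varepsilon_{\sigma(j)}'\,g_j^\sigma$ is a decoupled tangent sequence for it, the one-sided tangent-sequence decoupling inequality (valid in any Banach space up to a numerical constant) gives, uniformly in $\sigma$,
\[
\big(\E\|P(\varepsilon)\|^p\big)^{1/p}\le C_p\,\Big(\E\Big\|\sum_j\tilde d_j^\sigma\Big\|^p\Big)^{1/p}.
\]
Averaging over a uniformly random ordering $\sigma$ and using that, for each monomial of size $m$, its $\sigma$-largest index is uniformly distributed over it, one sees that $\E_\sigma\sum_j\tilde d_j^\sigma=M(\varepsilon',\varepsilon,\ldots,\varepsilon)$. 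Hence, writing $Y_\sigma:=\sum_j\tilde d_j^\sigma$, the matter reduces to the reverse-Jensen type inequality
\[
\E_\sigma\big(\E\|Y_\sigma\|^p\big)^{1/p}\le C\,m^{1-1/q}\,\big(\E\|\E_\sigma Y_\sigma\|^p\big)^{1/p}.
\]

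This last inequality is the heart of the argument and the step I expect to be the main obstacle. Here is where cotype $q$ must enter: because $P$ is $m$-homogeneous, $Y_\sigma$ depends on $\sigma$ only through the relative $\sigma$-order of the $m$ indices inside each monomial, so the fluctuation of $Y_\sigma$ around its $\sigma$-mean $M(\varepsilon',\varepsilon,\ldots,\varepsilon)$ is carried by at most $m$ essentially independent pieces; the cotype-$q$ inequality — combined with the polynomial Kahane-Khinchin inequality of Theorem~\ref{poiss} to reconcile the $L^p$- and $L^q$-exponents while keeping every constant independent of $m$ and $n$ — should then bound $\E_\sigma\big(\E\|Y_\sigma-M(\varepsilon',\varepsilon,\ldots,\varepsilon)\|^p\big)^{1/p}$ by a constant times $m^{1/q}\,(\E\|M(\varepsilon',\varepsilon,\ldots,\varepsilon)\|^p)^{1/p}$, in place of the $m\,(\E\|M(\varepsilon',\varepsilon,\ldots,\varepsilon)\|^p)^{1/p}$ coming from the triangle inequality. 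This is exactly the $m$-homogeneous counterpart of the mechanism that, absent a degree bound, produces the $\log n$ of~\eqref{pisin} and is circumvented for finite-cotype spaces in~\cite{paa}; identifying the right independent pieces so that cotype genuinely applies, and certifying that the gained exponent is exactly $1/q$, is where the real work sits. Granting it, combining with the tangent-sequence decoupling and the identifications above proves the proposition; the hypothesis $p\le q$ is used only so that all auxiliary transfers between $L^p$ and $L^q$ (for Rademacher-type sums) cost constants depending on $p$ and $q$ alone.
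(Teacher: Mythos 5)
Your proposal is an outline whose decisive step is not proved, so it does not yet constitute a proof. The inequality you isolate as ``the heart of the argument,'' namely $\E_\sigma\big(\E\|Y_\sigma\|^p\big)^{1/p}\le C\,m^{1-1/q}\big(\E\|\E_\sigma Y_\sigma\|^p\big)^{1/p}$, is essentially of the same depth as the proposition itself: you would need to control the fluctuation $Y_\sigma-M(\varepsilon',\varepsilon,\ldots,\varepsilon)$ by the norm of $M(\varepsilon',\varepsilon,\ldots,\varepsilon)$ with a gain of $m^{1-1/q}$ over the trivial triangle-inequality bound, and the sketch (``at most $m$ essentially independent pieces,'' ``cotype should then bound\ldots'') does not identify any concrete sum of the form $\sum_l \delta_l z_l$ to which the cotype-$q$ definition \eqref{defcotipo} can actually be applied, nor why the resulting randomized sum can be re-identified with $M(\varepsilon',\varepsilon,\ldots,\varepsilon)$. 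In addition, your first reduction rests on the claim that the one-sided comparison of a martingale with its decoupled tangent sequence, $\big(\E\|\sum_j d_j^\sigma\|^p\big)^{1/p}\le C_p\big(\E\|\sum_j\tilde d_j^\sigma\|^p\big)^{1/p}$, is ``valid in any Banach space up to a numerical constant.'' It is not: this is precisely the nontrivial one-sided decoupling property for tangent sequences, which is known for UMD spaces (and a few others such as $L^1$) but is not a universal fact with dimension-free constants, and the proposition only assumes finite cotype. Since also the passage from $\E_\sigma Y_\sigma$ inside the norm to the average of norms goes the wrong way for Jensen, both halves of your plan need genuinely new input.

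For comparison, the paper's proof avoids any appeal to tangent-sequence decoupling and makes the role of cotype completely explicit. It starts from the combinatorial identity \eqref{idcomb} of Rzeszut--Wojciechowski, which writes $\sum_{|A|=m}x_A\varepsilon_A$ (up to a factor bounded by $4$) as an average over sets $B$ of size $k=n/m$ of sums in which the index in $B$ and the indices in $B^c$ never meet; this disjointness makes the replacement of $\varepsilon_{A_1}$ by an independent copy $\varepsilon'_{A_1}$ an exact distributional identity, and the inner sum is recognized (after a conditional expectation) as $\sum_{j\in B}\varepsilon'_j\partial_jP(\varepsilon)$. Averaging over ordered partitions $(B_1,\ldots,B_m)$, H\"older's inequality produces the factor $m^{1/q'}$, the cotype-$q$ inequality (with auxiliary signs $\delta_l$ and Kahane--Khinchin, using $p\le q$) packs the $m$ block sums back into one randomized sum, and the identity $\delta_l\varepsilon'_j\sim\varepsilon'_j$ reassembles it into $\langle\nabla P(\varepsilon),\varepsilon'\rangle=m\,M(\varepsilon',\varepsilon,\ldots,\varepsilon)$ via \eqref{ecgrad}, giving the exponent $1-1/q$. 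If you want to salvage your plan, the block/partition structure is exactly the ``identification of independent pieces'' your heuristic is missing.
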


Regrettably, this inequality only provides an analogue to~\eqref{ec1decoz} for cotype 2 spaces where $m^{1-1/q}$ becomes $\sqrt{m}$. It should be mentioned that for $X=L^q(\mu)$ and $p=q$ it is easily proven that the above result holds replacing $m^{1-1/q}$ with $\sqrt{m}$. This suggests that $\sqrt{m}$ could be the right bound regardless of the cotype of $X$.

In order to prove Proposition~\ref{casi} one can replace the integral from 0 to $\infty$ in \cite[Theorem~1.4]{paa} with an integral from 0 to $1/m$ and carefully follow the proof of this result as well as \cite[Proposition~4.2]{paa}. However, we take a more direct route using a combinatorial identity from Rzeszut and Wojciechowski which can be retrieved from equations (3.12) through (3.15) from \cite{RzWo_19}. This identity inspired our Lemma~\ref{lemidcomb2} and can be regarded as a one-variable decoupling version of it. Given a vector space $V$, a family $\{ v_{A} \colon A \subseteq [n], \, \vert A \vert =m \}\subseteq V$ (where $n,m \in \mathbb{N}$) and $k \in \mathbb{N}$ we have
\begin{equation}\label{idcomb}
\begin{split}
	\sum_{\substack{B\subseteq[n] \\ |B|=k}}
	&\sum_{\substack{A_1\subseteq B \\ |A_1|=1}}
	\sum_{\substack{A_2\subseteq B^c \\ |A_2|=m-1}} v_{A_1\cup A_2}
	=
	\sum_{\substack{B\subseteq[n] \\ |B|=k}}
	\sum_{\substack{A\subseteq [n] \\ |A|=m \\|A\cap B|=1}} v_A
	=
	\sum_{\substack{A\subseteq[n] \\ |A|=m}}
	\sum_{\substack{B\subseteq [n] \\ |B|=k \\ |A\cap B|=1}} v_A
	\\ &=
	\sum_{\substack{A\subseteq[n] \\ |A|=m}}
	\big|\{B\subseteq [n] : \ |B|=k, \ |A\cap B|=1\}\big| v_A
	=
	m \binom{n-m}{k-1} \sum_{\substack{A\subseteq [n] \\ |A|=m}} v_A.
\end{split}
\end{equation}
Notice that if $n=km$, then $\binom{n-m}{k-1} = \binom{(k-1)m}{k-1}$.
	Analogously, as in the decomposition from Section~\ref{secdeco}, a straightforward (but tedious) application of Stirling's formula yields
\[
	\frac{\binom{n}{k}}{m \binom{n-m}{k-1}} \leq 4.
\]
The main idea is that in this case we can compare
\[\sum_{\substack{A\subseteq [n] \\ |A|=m}} v_A \quad \text{vs.}\quad
\frac{1}{\binom{n}{k}}\sum_{\substack{B\subseteq[n] \\ |B|=k}}
	\sum_{\substack{A_1\subseteq B \\ |A_1|=1}}
	\sum_{\substack{A_2\subseteq B^c \\ |A_2|=m-1}} v_{A_1\cup A_2}.\]
	Whereas the left-hand side will be the object of study, the right-hand side is an average (over all subsets $B$) of an expression with a decoupled structure. This is because for a fixed $B$ the indices $A$ have been split in $A_1$ and $A_2$ in such a way that $A_1$ has always 1 element, $A_2$ has always $m-1$ elements and these elements do not mix since $A_1\subseteq B$ and $A_2\subseteq B^c$.
	
\begin{proof}[Proof of Proposition~\ref{casi}]
Let $\{ x_{A} \colon A \subseteq [n], \, \vert A \vert =m \}$ be a family of vectors in $X$. Taking $n$ larger if necessary, we may assume $n=km$ for some $k\in \N$ so we have
\[\frac{1}{m \binom{n-m}{k-1}} \leq \frac{4}{\binom{n}{k}}.
\]
Using~\eqref{idcomb} for $v_A=x_A \varepsilon_A$ we get
\begin{align}
\label{ecic2}
\Big(\mathbb{E}_\varepsilon\Big\|\sum_{|A|=m} x_A \varepsilon_A\Big\|^p\Big)^{1/p}
&= \frac{1}{m \binom{n-m}{k-1}}\Big(\mathbb{E}_\varepsilon\Big\| \sum_{\substack{B\subseteq[n] \\ |B|=k}} \sum_{\substack{A_1\subseteq B \notag \\ |A_1|=1}}
	\sum_{\substack{A_2\subseteq B^c \\ |A_2|=m-1}} x_{A_1\cup A_2}\varepsilon_{A_1\cup A_2}\Big\|^p\Big)^{1/p} \notag
\\&\leq \frac{4}{\binom{n}{k}}\sum_{\substack{B\subseteq[n] \\ |B|=k}}\Big(\mathbb{E}_\varepsilon\Big\|\sum_{\substack{A_1\subseteq B \\ |A_1|=1}}
	\varepsilon_{A_1} \sum_{\substack{A_2\subseteq B^c \\ |A_2|=m-1}} x_{A_1\cup A_2}\varepsilon_{A_2}\Big\|^p\Big)^{1/p}.
\end{align}
Now since $A_1\subseteq B$ and $A_2\subseteq B^c$ these indices never overlap. So for a fixed $B$ we get
\begin{align}
\label{ecic3}
\mathbb{E}_\varepsilon\Big\|\sum_{\substack{A_1\subseteq B \\ |A_1|=1}}
	\varepsilon_{A_1} \sum_{\substack{A_2\subseteq B^c \\ |A_2|=m-1}} x_{A_1\cup A_2}\varepsilon_{A_2}\Big\|^p
	=\mathbb{E}_{\varepsilon,\varepsilon'}\Big\|\sum_{\substack{A_1\subseteq B \\ |A_1|=1}}
	\varepsilon'_{A_1} \sum_{\substack{A_2\subseteq B^c \\ |A_2|=m-1}} x_{A_1\cup A_2}\varepsilon_{A_2}\Big\|^p.
	\end{align}
Denote $\partial_j P = \frac{\partial P}{\partial z_j}$ and notice that
\[\partial_j P (\varepsilon)= \sum_{\substack{A\subseteq [n]-\{j\} \\ |A|=m-1}} x_{\{j\}\cup A} \varepsilon_A.\]
We have
	\begin{align}
	\label{ecic4}
	\E_{\varepsilon_i/ i\in B}\Big[ \sum_{j\in B} \varepsilon'_j \partial_j P(\varepsilon)\Big]
	&=\E_{\varepsilon_i/ i\in B}\Big[\sum_{\substack{A_1\subseteq B \\ |A_1|=1} }
	\varepsilon'_{A_1} \sum_{\substack{A_2 \subseteq A_1^c \\ |A_2|=m-1}} x_{A_1\cup A_2}\varepsilon_{A_2}\Big] \notag
	\\ &=\sum_{\substack{A_1\subseteq B \\ |A_1|=1}}
	\varepsilon'_{A_1} \sum_{\substack{A_2\subseteq B^c \\ |A_2|=m-1}} x_{A_1\cup A_2}\varepsilon_{A_2}.
	\end{align}
Joining~\eqref{ecic2},~\eqref{ecic3},~\eqref{ecic4} and using Jensen's inequality we obtain
\begin{align}
\label{ecic5}
	\Big(\mathbb{E}_\varepsilon\Big\|\sum_{|A|=m} x_A \varepsilon_A\Big\|^p\Big)^{1/p}
	\leq \frac{4}{\binom{n}{k}}\sum_{\substack{B\subseteq[n] \\ |B|=k}}\Big(\mathbb{E}_{\varepsilon,\varepsilon'}\Big\| \sum_{j\in B} \varepsilon'_j \partial_j P(\varepsilon)\Big\|^p\Big)^{1/p}.
\end{align}
Recall that $n=km$ and the definition of $\Pi_{k,m}$ from~\eqref{party} as the familiy of all ordered partitions $\pi=(B_1,\ldots,B_m)$ of $[n]$ in $m$ sets $B_l$ of $k$-elements.
By a symmetry argument observe that
\begin{align}
\label{parti}
\frac{1}{\binom{n}{k}}\sum_{\substack{B\subseteq[n] \\ |B|=k}}\Big(\mathbb{E}_{\varepsilon,\varepsilon'}\Big\| \sum_{j\in B} \varepsilon'_j \partial_j P(\varepsilon)\Big\|^p\Big)^{1/p}
&=\frac{1}{m|\Pi_{k,m}|}\sum_{\pi\in\Pi_{k,m}}\sum_{l=1}^m\Big(\mathbb{E}_{\varepsilon,\varepsilon'}\Big\| \sum_{j\in B_l} \varepsilon'_j \partial_j P(\varepsilon)\Big\|^p\Big)^{1/p}.
\end{align}
Using H\"older's and Minkowski's inequalities we get
\begin{align}\label{eqinter}
\sum_{l=1}^m\Big(\mathbb{E}_{\varepsilon,\varepsilon'}\Big\| \sum_{j\in B_l} \varepsilon'_j \partial_j P(\varepsilon)\Big\|^p\Big)^{1/p}
&\leq m^{1/q'} \Big(\sum_{l=1}^m\Big(\mathbb{E}_{\varepsilon,\varepsilon'}\Big\| \sum_{j\in B_l} \varepsilon'_j \partial_j P(\varepsilon)\Big\|^p\Big)^{q/p} \Big)^{1/q} \notag
\\ & \leq m^{1/q'} \Big(\mathbb{E}_{\varepsilon,\varepsilon'}\Big(\sum_{l=1}^m\Big\| \sum_{j\in B_l} \varepsilon'_j \partial_j P(\varepsilon)\Big\|^q\Big)^{p/q} \Big)^{1/p}
\end{align}
Applying the cotype $q$ inequality and the Kahane-Khinchin inequality to change the exponent $q$ to $p$ (see \cite[11.1]{DiJaTo95}), we have
\begin{align*}
\Big(\sum_{l=1}^m\Big\| \sum_{j\in B_l} \varepsilon'_j \partial_j P(\varepsilon)\Big\|^q\Big)^{1/q}
&\le C \Big(\mathbb{E}_{\delta} \Big\| \sum_{l=1}^m \delta_l \sum_{j\in B_l} \varepsilon'_j \partial_j P(\varepsilon)\Big\|^q\Big)^{1/q}
\\ &\le \widetilde{C} \Big(\mathbb{E}_{\delta} \Big\| \sum_{l=1}^m \delta_l \sum_{j\in B_l} \varepsilon'_j \partial_j P(\varepsilon)\Big\|^p\Big)^{1/p}.
\end{align*}
Joining this with \eqref{eqinter} we obtain
\begin{align*}
\sum_{l=1}^m\Big(\mathbb{E}_{\varepsilon,\varepsilon'}\Big\| \sum_{j\in B_l} \varepsilon'_j \partial_j P(\varepsilon)\Big\|^p\Big)^{1/p}
&\leq m^{1/q'} \widetilde{C} \Big(\mathbb{E}_{\varepsilon,\varepsilon',\delta} \Big\| \sum_{l=1}^m \delta_l \sum_{j\in B_l} \varepsilon'_j \partial_j P(\varepsilon)\Big\|^p\Big)^{1/p}
\\ & = m^{1/q'} \widetilde{C} \Big(\mathbb{E}_{\varepsilon,\varepsilon'} \Big\| \langle \nabla P(\varepsilon) , \varepsilon'\rangle\Big\|^p\Big)^{1/p},
\end{align*}
where in the last step we used that $\delta_l\varepsilon'_j\sim \varepsilon'_j$. Combining this with~\eqref{ecic5} and~\eqref{parti} we have
\[\Big(\mathbb{E}_\varepsilon\Big\|\sum_{|A|=m} x_A \varepsilon_A\Big\|^p\Big)^{1/p}
	\leq  4 \widetilde{C} m^{-1/q} \Big(\mathbb{E}_{\varepsilon,\varepsilon'} \Big\| \langle \nabla P(\varepsilon) , \varepsilon'\rangle\Big\|^p\Big)^{1/p}.\]
	The result follows from~\eqref{ecgrad}.
	\end{proof}

\bibliographystyle{abbrv}
\bibliography{biblio}

\end{document}